\newcommand{\Dchaintwo}[4]{
\rule[-3\unitlength]{0pt}{8\unitlength}
\begin{picture}(14,5)(0,3)
\put(1,2){\ifthenelse{\equal{#1}{l}}{\circle*{2}}{\circle{2}}}
\put(2,2){\line(1,0){10}}
\put(13,2){\ifthenelse{\equal{#1}{r}}{\circle*{2}}{\circle{2}}}
\put(1,5){\makebox[0pt]{\scriptsize #2}}
\put(7,4){\makebox[0pt]{\scriptsize #3}}
\put(13,5){\makebox[0pt]{\scriptsize #4}}
\end{picture}}
\newcommand{\Dchainthree}[6]{
\rule[-3\unitlength]{0pt}{8\unitlength}
\begin{picture}(26,5)(0,3)
\put(1,2){\ifthenelse{\equal{#1}{l}}{\circle*{2}}{\circle{2}}}
\put(2,2){\line(1,0){10}}
\put(13,2){\ifthenelse{\equal{#1}{m}}{\circle*{2}}{\circle{2}}}
\put(14,2){\line(1,0){10}}
\put(25,2){\ifthenelse{\equal{#1}{r}}{\circle*{2}}{\circle{2}}}
\put(1,5){\makebox[0pt]{\scriptsize #2}}
\put(7,4){\makebox[0pt]{\scriptsize #3}}
\put(13,5){\makebox[0pt]{\scriptsize #4}}
\put(19,4){\makebox[0pt]{\scriptsize #5}}
\put(25,5){\makebox[0pt]{\scriptsize #6}}
\end{picture}}
\newcommand{\Dtriangle}[7]{
\rule[-3\unitlength]{0pt}{12\unitlength}
\begin{picture}(18,7)(0,3)
\put(4,4){\ifthenelse{\equal{#1}{l}}{\circle*{2}}{\circle{2}}}
\put(5,4){\line(1,0){8}}
\put(14,4){\ifthenelse{\equal{#1}{r}}{\circle*{2}}{\circle{2}}}
\put(4.4472,4.8944){\line(1,2){4.1056}}
\put(9,14){\ifthenelse{\equal{#1}{t}}{\circle*{2}}{\circle{2}}}
\put(13.5528,4.8944){\line(-1,2){4.1056}}
\put(2,3){\makebox[0pt][r]{\scriptsize #2}}
\put(9,16){\makebox[0pt]{\scriptsize #3}}
\put(16,3){\makebox[0pt][l]{\scriptsize #4}}
\put(6,9){\makebox[0pt][r]{\scriptsize #5}}
\put(12.5,9){\makebox[0pt][l]{\scriptsize #6}}
\put(9,1){\makebox[0pt]{\scriptsize #7}}
\end{picture}}
\newcommand{\cDtriangle}[7]{
\rule[-10\unitlength]{0pt}{\unitlength}
\begin{picture}(18,7)(0,10)
\put(4,4){\ifthenelse{\equal{#1}{l}}{\circle*{2}}{\circle{2}}}
\put(5,4){\line(1,0){8}}
\put(14,4){\ifthenelse{\equal{#1}{r}}{\circle*{2}}{\circle{2}}}
\put(4.4472,4.8944){\line(1,2){4.1056}}
\put(9,14){\ifthenelse{\equal{#1}{t}}{\circle*{2}}{\circle{2}}}
\put(13.5528,4.8944){\line(-1,2){4.1056}}
\put(2,3){\makebox[0pt][r]{\scriptsize #2}}
\put(9,17){\makebox[0pt]{\scriptsize #3}}
\put(16,3){\makebox[0pt][l]{\scriptsize #4}}
\put(6,9){\makebox[0pt][r]{\scriptsize #5}}
\put(12.5,9){\makebox[0pt][l]{\scriptsize #6}}
\put(9,1){\makebox[0pt]{\scriptsize #7}}
\end{picture}}
\newcommand{\Dchainfour}[8]{
\rule[-3\unitlength]{0pt}{5\unitlength}
\begin{picture}(38,5)(0,3)
\put(1,2){\ifthenelse{\equal{#1}{1}}{\circle*{2}}{\circle{2}}}
\put(2,2){\line(1,0){10}}
\put(13,2){\ifthenelse{\equal{#1}{2}}{\circle*{2}}{\circle{2}}}
\put(14,2){\line(1,0){10}}
\put(25,2){\ifthenelse{\equal{#1}{3}}{\circle*{2}}{\circle{2}}}
\put(26,2){\line(1,0){10}}
\put(37,2){\ifthenelse{\equal{#1}{4}}{\circle*{2}}{\circle{2}}}
\put(1,5){\makebox[0pt]{\scriptsize #2}}
\put(7,4){\makebox[0pt]{\scriptsize #3}}
\put(13,5){\makebox[0pt]{\scriptsize #4}}
\put(19,4){\makebox[0pt]{\scriptsize #5}}
\put(25,5){\makebox[0pt]{\scriptsize #6}}
\put(31,4){\makebox[0pt]{\scriptsize #7}}
\put(37,5){\makebox[0pt]{\scriptsize #8}}
\end{picture}}
\newcommand{\bigDchainfour}[8]{
\rule[-3\unitlength]{0pt}{5\unitlength}
\begin{picture}(42,5)(0,3)
\put(1,2){\ifthenelse{\equal{#1}{1}}{\circle*{2}}{\circle{2}}}
\put(2,2){\line(1,0){14}}
\put(17,2){\ifthenelse{\equal{#1}{2}}{\circle*{2}}{\circle{2}}}
\put(18,2){\line(1,0){10}}
\put(29,2){\ifthenelse{\equal{#1}{3}}{\circle*{2}}{\circle{2}}}
\put(30,2){\line(1,0){10}}
\put(41,2){\ifthenelse{\equal{#1}{4}}{\circle*{2}}{\circle{2}}}
\put(1,5){\makebox[0pt]{\scriptsize #2}}
\put(9,4){\makebox[0pt]{\scriptsize #3}}
\put(17,5){\makebox[0pt]{\scriptsize #4}}
\put(23,4){\makebox[0pt]{\scriptsize #5}}
\put(29,5){\makebox[0pt]{\scriptsize #6}}
\put(35,4){\makebox[0pt]{\scriptsize #7}}
\put(41,5){\makebox[0pt]{\scriptsize #8}}
\end{picture}}
\newcommand{\Dthreefork}[8]{
\rule[-9\unitlength]{0pt}{12\unitlength}
\begin{picture}(28,12)(0,9)
\put(2,10){\ifthenelse{\equal{#1}{l}}{\circle*{2}}{\circle{2}}}
\put(3,10){\line(1,0){10}}
\put(14,10){\ifthenelse{\equal{#1}{m}}{\circle*{2}}{\circle{2}}}
\put(15,10){\line(1,1){7}}
\put(15,10){\line(1,-1){7}}
\put(22,18){\ifthenelse{\equal{#1}{t}}{\circle*{2}}{\circle{2}}}
\put(22,2){\ifthenelse{\equal{#1}{b}}{\circle*{2}}{\circle{2}}}
\put(2,12){\makebox[0pt]{\scriptsize #2}}
\put(8,11){\makebox[0pt]{\scriptsize #3}}
\put(14,12){\makebox[0pt]{\scriptsize #4}}
\put(19,16){\makebox[0pt][r]{\scriptsize #5}}
\put(19,4){\makebox[0pt][r]{\scriptsize #6}}
\put(24,17){\makebox[0pt][l]{\scriptsize #7}}
\put(24,2){\makebox[0pt][l]{\scriptsize #8}}
\end{picture}}
\newcommand{\Drightofway}[9]{
\rule[-9\unitlength]{0pt}{12\unitlength}
\begin{picture}(28,12)(0,9)
\put(2,10){\ifthenelse{\equal{#1}{l}}{\circle*{2}}{\circle{2}}}
\put(3,10){\line(1,0){10}}
\put(14,10){\ifthenelse{\equal{#1}{m}}{\circle*{2}}{\circle{2}}}
\put(15,10){\line(1,1){7}}
\put(15,10){\line(1,-1){7}}
\put(22,18){\ifthenelse{\equal{#1}{t}}{\circle*{2}}{\circle{2}}}
\put(22,2){\ifthenelse{\equal{#1}{b}}{\circle*{2}}{\circle{2}}}
\put(22,3){\line(0,1){14}}
\put(2,12){\makebox[0pt]{\scriptsize #2}}
\put(8,11){\makebox[0pt]{\scriptsize #3}}
\put(14,12){\makebox[0pt]{\scriptsize #4}}
\put(19,16){\makebox[0pt][r]{\scriptsize #5}}
\put(19,4){\makebox[0pt][r]{\scriptsize #6}}
\put(24,18){\makebox[0pt][l]{\scriptsize #7}}
\put(23,10){\makebox[0pt][l]{\scriptsize #8}}
\put(24,1){\makebox[0pt][l]{\scriptsize #9}}
\end{picture}}
\def \To{\longrightarrow}
\def \dim{\operatorname{dim}}
\def \gr{\operatorname{gr}}
\def \Hom{\operatorname{Hom}}
\def \ord{\operatorname{ord}}
\def \id{\operatorname{id}}
\def \1{\mathbf{1}}
\def \D{\Delta}
\def \d{\delta}
\def \e{\varepsilon}
\def \M{\mathrm{M}}
\def \MM{\mathbbm{M}}
\def \R{\mathbbm{R}}
\def \N{\mathbb{N}}
\def \S{\mathcal{S}}
\def \Z{\mathbb{Z}}
\def \deg{\operatorname{deg}}
\def \m{\mathbbm{m}}
\def \g{\mathbbm{g}}
\def \G{\mathbbm{G}}
\def \k{\mathbf{k}}
\def \B{\mathcal{B}}
\def \H{\textrm{H}}
\numberwithin{equation}{section}
\numberwithin{table}{section}
\numberwithin{equation}{section}
\newtheorem{theorem}{Theorem}[section]
\newtheorem{lemma}[theorem]{Lemma}
\newtheorem{proposition}[theorem]{Proposition}
\newtheorem{corollary}[theorem]{Corollary}
\newtheorem{definition}[theorem]{Definition}
\newtheorem{example}[theorem]{Example}
\newtheorem{remark}[theorem]{Remark}
\newtheorem{convention}[theorem]{Convention}
\begin{document}
\title{Finite quasi-quantum groups of diagonal type$^\dag$}\thanks{\tiny $^\dag$Supported by NSFC 11371186, 11431010, 11571199, and 11571329.}

\subjclass[2010]{16T05, 18D10, 16G60}
\keywords{quasi-quantum group, Nichols algebra, tensor category}

\author{Hua-Lin Huang}
\address{School of Mathematical Sciences, Huaqiao University, Quanzhou 362021, China} \email{hualin.huang@hqu.edu.cn}

\author{Gongxiang Liu}
\address{Department of Mathematics, Nanjing University, Nanjing 210093, China}
\email{gxliu@nju.edu.cn}

\author{Yuping Yang}
\address{School of Mathematics and Statistics, Southwest University, Chongqing 400715, China} \email{yupingyang@swu.edu.cn}

\author{Yu Ye}
\address{School of Mathematics, University of Science and Technology of China, Hefei 230026, China} \email{yeyu@ustc.edu.cn}

\date{}
\maketitle

\begin{abstract}
The goal of the present paper is to classify an interesting class of elementary quasi-Hopf algebras, or equivalently, finite-dimensional pointed Majid algebras. By a Tannaka-Krein type duality, this determines a big class of pointed finite tensor categories. Based on some interesting observations of normalized 3-cocycles on finite abelian groups, we elucidate an explicit connection between our objective pointed Majid algebras and finite-dimensional pointed Hopf algebras over finite abelian groups. With a help of this connection and the successful theory of diagonal Nichols algebras over abelian groups, we provide a conceptual classification of finite-dimensional graded pointed Majid algebras of diagonal type. Some efficient methods of construction are also given.
\end{abstract}


\section{Introduction}
The classification problem of finite quasi-quantum groups is motivated mainly by the theory of finite tensor categories \cite{EO}. Among which, the classification of elementary quasi-Hopf algebras, or equivalently finite-dimensional pointed Majid algebras, has been paid much attention in the last one and a half decades. Quite a few examples and classification results of such algebras, and consequently the associated pointed finite tensor categories, are thus obtained, see e.g. \cite{EG1,EG2, EG3,G,A}. In these studies, Etingof and Gelaki's novel idea of constructing genuine quasi-Hopf algebras from known pointed Hopf algebras plays a key role. This also builds a substantial connection from pointed finite tensor categories to the beautiful theory of finite-dimensional pointed Hopf algebras \cite{AS1, an}, rather than just making the latter a role model in view of the obvious similarity.

The basic idea of Etingof and Gelaki in \cite{EG1,EG2,EG3} is embedding a genuine elementary quasi-Hopf algebras into an elementary quasi-Hopf algebra, possibly up to twist equivalence. The crux of these constructions is that there is a resolution for any given 3-cocycle on a cyclic group, namely, for any 3-cocycle $\sigma$ on $\Z_n=\langle g | g^n=1 \rangle,$ the pull-back $\pi^* (\sigma)$ along the natural projection $\pi: \Z_{n^2} \to \Z_n$ is a 3-coboundary on $\Z_{n^2}.$ With this idea, the result of 3-cocycles on abelian groups of form $\Z_m \times \Z_n$ obtained in \cite{bgrc1} helps us go a step forward in constructing new finite quasi-quantum groups. In our previous work \cite{qha6}, we give a complete classification of finite-dimensional coradically graded pointed Majid algebras of rank 2. As a continuation of {\it ibid.}, this paper aims to classify diagonal finite quasi-quantum groups of arbitrary rank.

In the present paper we generalize the working spirit of \cite{qha6} to the relatively general situation. Hence we need to solve four main problems as follows. The first problem, finding a resolution for any normalized 3-cocycle, lies basically in cohomology of finite abelian groups. By extending the idea of \cite{bgrc1}, we are able to give a unified and explicit formula for a complete set of representatives of normalized 3-cocycles on any finite abelian groups. Moreover, we show that a 3-cocycle is resolvable by a finite abelian group if and only if it is abelian and we give an explicit resolution if this is indeed the case. This is also the essential case where diagonal Nichols algebras occur for twisted Yetter-Drinfeld categories. For the second problem, to give a clear description of diagonal Nichols algebras in the twisted Yetter-Drinfeld category $^\mathbbm{G}_\mathbbm{G} \mathcal{YD}^\Phi,$ we transform them  to those in the usual Yetter-Drinfeld category $^G_G \mathcal{YD}$ by a delicate manipulation, where $G$ is a finite abelian group with canonical projection $\pi: G \to \mathbb{G}$ such that $\pi^* (\Phi)$ is a 3-coboundary on $G.$ The possibility of such a transformation is guaranteed by the first step. Then by combining Heckenberger's classification of arithmetic root systems \cite{H3}, we achieve a complete classification of diagonal Nichols algebras with arithmetic root systems in $^\mathbbm{G}_\mathbbm{G} \mathcal{YD}^\Phi.$ With the transformation, we can also reduce our third problem of generation into that of Nichols algebras in the usual Yetter-Drinfeld categories of finite abelian groups. With a help of Angiono's result \cite{Ang}, we extend the useful idea in \cite{qha6} to the general situation and prove that finite-dimensional pointed Majid algebras of diagonal type are generated by group-likes and skew-primitive elements. The second and third steps together provide a complete classification of finite-dimensional graded pointed Majid algebras of diagonal type in a conceptual way. Finally we shall need to turn the conceptual classification into an operable construction, our fourth problem. For any given finite abelian group with fixed 3-cocycle and a compatible arithmetic root system, the construction is essentially a computational problem of linear congruence equations. We find two efficient ways, for most cases, to generate series of new genuine finite-dimensional pointed Majid algebras.

Here is the layout of the paper. Section 2 is devoted to some preliminary materials. In Section 3, we provide an explicit formula for normalized 3-cocycles on finite abelian groups and give resolutions of the abelian ones via finite abelian groups. In Section 4, we give a complete classification of diagonal Nichols algebras with arithmetic root system in the twisted Yetter-Drinfeld category $^G_G \mathcal{YD}^\Phi$ with $\Phi$ nontrivial. Then in Section 5 we classify in a conceptual way all the connected finite-dimensional graded pointed Majid algebras of diagonal type. Finally in Section 6 we provide some methods to construct new genuine finite-dimensional pointed Majid algebras.

Throughout the paper, $\k$ is an algebraically closed field with characteristic zero and all linear spaces are over $\k.$ A left (resp. bi-) $G$-comodule $M$, by definition, is a $G$-graded (resp. bigraded) space $M=\bigoplus_{g\in G}{^{g}M}$ (resp. $M=\bigoplus_{g,h\in G} {^{g}M^{h}}$). In general, we only deal with homogeneous elements unless stated otherwise. For convenience, if $X\in {^{g}M}$ (resp. $X\in {^{g}M^{h}}$) then we use its lowercase $x$ to denote its degree, that is $x=g$ (resp. $x=gh^{-1}$). In accordance with our previous works \cite{qha1,qha2,qha3,qha6}, we only work on pointed Majid algebras. By taking linear dual, one has the version for elementary quasi-Hopf algebras.

\section{Preliminaries}
In this section we recall some preliminary concepts, notations and facts. Clearly, there are some inevitable overlaps with the counterpart of \cite{qha6}. For the completeness and for the convenience of the reader, we remain some materials presented already in {\it loc. cit.}.

\subsection{Majid algebras.}
By definition, Majid algebras are exactly the dual of Drinfeld's quasi-Hopf algebras \cite{Dr}, and can be given as follows.
\begin{definition} A Majid algebra is a coalgebra $(\MM,\D,\e)$ equipped with a
compatible quasi-algebra structure and a quasi-antipode. Namely,
there exist two coalgebra homomorphisms $$\M: \MM \otimes \MM \To \MM, \ a
\otimes b \mapsto ab, \quad \mu: \k \To \MM,\ \lambda \mapsto \lambda
1_\MM,$$ a convolution-invertible map $\Phi: \MM^{\otimes 3} \To \k$
called associator, a coalgebra antimorphism $\S: \MM \To \MM$ and two
functionals $\alpha,\beta: \MM \To \k$ such that for all $a,b,c,d \in
\MM$ the following equalities hold:
\begin{eqnarray}
&a_1(b_1c_1)\Phi(a_2,b_2,c_2)=\Phi(a_1,b_1,c_1)(a_2b_2)c_2,\\
&1_\MM a=a=a1_\MM, \\
&\Phi(a_1,b_1,c_1d_1)\Phi(a_2b_2,c_2,d_2) =\Phi(b_1,c_1,d_1)\Phi(a_1,b_2c_2,d_2)\Phi(a_2,b_3,c_3),\\
&\Phi(a,1_\MM,b)=\e(a)\e(b). \\
&\S(a_1)\alpha(a_2)a_3=\alpha(a)1_\MM, \quad a_1\beta(a_2)\S(a_3)=\beta(a)1_\MM, \label{eq2.5} \\
&\Phi(a_1,\S(a_3),a_5)\beta(a_2)\alpha(a_4)
=\Phi^{-1}(\S(a_1),a_3,\S(a_5)) \alpha(a_2)\beta(a_4)=\e(a). \label{eq2.6}
 \end{eqnarray}
Throughout we use the Sweedler sigma notation $\D(a)=a_1 \otimes
a_2$ for the coproduct and $a_1 \otimes a_2 \otimes \cdots \otimes
a_{n+1}$ for the result of the $n$-iterated application of $\D$ on
$a.$\end{definition}

\begin{example} Let $G$ be a group and $\Phi$ a normalized $3$-cocycle on $G$. It is well known that
the group algebra $\k G$ is a Hopf algebra with $\D(g)=g\otimes g,\; \S(g)=g^{-1}$ and
$\e(g)=1$ for any $g\in G$. By extending $\Phi$ trilinearly, then $\Phi \colon (\k G)^{\otimes 3} \to \k$ becomes a
 convolution-invertible map. Define two linear functions
 $\alpha,\beta \colon \k G \to \k$ just by $\alpha(g):=\e(g)$ and $\beta(g):=\frac{1}{\Phi(g,g^{-1},g)}$
 for any $g\in G$. Then $kG$ together with these $\Phi, \ \alpha$ and $\beta$
 becomes a Majid algebra. In the following, this resulting Majid algebra is denoted by $(\k G,\Phi)$.
\end{example}

Recall that, a Majid algebra $\MM$ is said to be \emph{pointed} if the underlying
coalgebra is so. Given a pointed Majid algebra $(\MM,\D, \e,
\M, \mu, \Phi,\S,\alpha,\beta),$ let $\{\MM_n\}_{n \ge 0}$ be its
coradical filtration, and $$\gr \MM = \MM_0 \oplus \MM_1/\MM_0 \oplus \MM_2/\MM_1
\oplus \cdots$$ the corresponding coradically graded coalgebra. Then naturally
$\gr \MM$ inherits from $\MM$ a graded Majid algebra structure. The
corresponding graded associator $\gr\Phi$ satisfies
$\gr\Phi(\bar{a},\bar{b},\bar{c})=0$ for all homogeneous
$\bar{a},\bar{b},\bar{c} \in \gr \MM$ unless they all lie in $\MM_0.$
Similar condition holds for $\gr\alpha$ and $\gr\beta.$ In
particular, $\MM_0$ is a Majid subalgebra and it turns out to be the
Majid algebra $(\k G, \gr\Phi)$ for $G=G(\MM),$ the set of group-like
elements of $\MM.$ We call a pointed Majid algebra $\MM$
\emph{graded} if $\MM \cong \gr \MM$ as Majid algebras. One can also see \cite{qha1} for more details on pointed Majid algebras.

\begin{definition} Let $(\MM,\D, \e, \M, \mu, \Phi,\S,\alpha,\beta)$ be a Majid algebra. A convolution-invertible linear map
$$J:\; \MM \otimes \MM \to \k$$
is called a twisting (or gauge transformation) on $\MM$ if
$$J(h,1)=\e(h)=J(1,h)$$
for all $h\in \MM$.
\end{definition}

Given a Majid algebra $\MM$ and a twisting $J,$ then one can construct
a new Majid algebra $\MM^{J}$ as follows: $\MM^{J}=\MM$ as a coalgebra and the multiplication $``\circ"$ on $\MM^{J}$ is given by
\begin{equation}
 a\circ b:=J(a_1,b_1)a_2b_2J^{-1}(a_3,b_3)
\end{equation}
for all $a,b\in \MM$. The associator $\Phi^{J}$ and the quasi-antipode $(\S^J,\alpha^J,\beta^{J})$ are given as:
$$\Phi^J(a,b,c)=J(b_1,c_1)J(a_1,b_2c_2)\Phi(a_2,b_3,c_3)J^{-1}(a_3b_4,c_4)J^{-1}(a_4,b_5),$$
$$\S^J=\S,\;\;\;\;\alpha^J(a)=J^{-1}(\S(a_1),a_3)\alpha(a_2),\;\;\;\;\beta^J(a)=J(a_1,\S(a_3))\beta(a_2)$$
for all $a,b,c \in \MM$.

\begin{definition} Two Majid algebras $\MM_1$ and $\MM_2$ are called \emph{twist equivalent} if there is a twisting $J$ on $\MM_1$ such that $\MM_1^{J}\cong \MM_2$ as Majid algebras. Denote $\MM_1 \sim \MM_2$ if $\MM_1$ is twist equivalent to $\MM_2$. We call a Majid algebra $\MM$ \emph{genuine} if it is not twist equivalent to a Hopf algebra.
\end{definition}

\subsection{Yetter-Drinfeld modules over $(\k G,\Phi)$.}
For our purpose, we recall only the definition of Yetter-Drinfeld modules over Majid algebras of form $(\k G,\Phi)$ with $G$ an \emph{abelian} group.

Assume that $V$ is a left $\k G$-comudule with comodule structure map $\delta_{L}:\; V\to \k G\otimes V$.
Define $^{g}V:=\{v\in V|\delta_{L}(v)=g\otimes v\}$ and thus $V=\bigoplus_{g\in G}\;^{g}V.$ Here we call $g$ the degree of the elements in $^{g}V$ and denote by $\deg v=g$ for $v\in {^{g}V}.$
For the $3$-cocycle $\Phi$ on $G$ and any $g\in G$, define
\begin{equation} \widetilde{\Phi}_g:\;G\times G\to \k^{\ast}, \quad (e,f)\mapsto \frac{\Phi(g,e,f)\Phi(e,f,g)}{\Phi(e,g,f)}.
\end{equation}
Direct computation shows that
$$\widetilde{\Phi}_g\in \mathbb{Z}^{2}(G,\k^{\ast}).$$

\begin{definition}\label{d2.9} A left $\k G$-comudule $V$ is a \emph{left-left Yetter-Drinfeld module}
over $(\k G,\Phi)$ if
each $^{g}V$ is a projective
$G$-representation with respect to the $2$-cocycle $\widetilde{\Phi}_g,$ namely the $G$-action $\triangleright$ on ${^{g}V}$ satisfies
\begin{equation}
e\triangleright(f\triangleright v)=\widetilde{\Phi}_g(e,f) (ef)\triangleright v,\;\;\;\; \forall e,f\in G,\;v\in \;^{g}V.
\end{equation}
\end{definition}

The category of all left-left Yetter-Drinfeld modules is denoted by $_{ G}^{ G}\mathcal{YD}^{\Phi}$.
Similarly, one can define left-right, right-left and right-right Yetter-Drinfeld modules over $(\k G,\Phi)$.
As the familiar Hopf case, $_{ G}^{ G}\mathcal{YD}^{\Phi}$ is a braided tensor category. More precisely, for any  $M, N\in \;_{ G}^{ G}\mathcal{YD}^{\Phi},$ the structure maps of $M\otimes N$ as a left-left Yetter-Drinfeld module are given by
\begin{equation}\delta_{L}(m_{g}\otimes n_{h}):=gh \otimes m_{g} \otimes n_{h},\;\;x\triangleright (m_{g}\otimes n_{h}):=\widetilde{\Phi}_x(g,h)x\triangleright m_{g}
\otimes x\triangleright n_{h}\end{equation}
for all $x,g,h\in G$ and $m_{g} \in {^{g}M},\;n_{h}\in {^{h}N}$.
The associativity constraint $a$ and the braiding $c$ of $_{ G}^{ G}\mathcal{YD}^{\Phi}$ are given respectively by
\begin{eqnarray}
&\ a((u_{e}\otimes v_{f})\otimes w_{g}) =\Phi(e,f,g)^{-1} u_{e}\otimes (v_{f}\otimes w_{g})\\
&c(u_{e}\otimes v_{f})=e \triangleright v_{f}\otimes u_{e}
\end{eqnarray}
for all $e,f,g\in G$, $u_{e}\in {^{e}U},\  v_{f}\in {^{f}V},\  w_{g} \in {^{g}W}$ and $U,V,W\in\; _{ G}^{ G}\mathcal{YD}^{\Phi}$.

\begin{remark} A left-left Yetter-Drinfeld module $V$ over $(\k G,\Phi)$ is called \emph{diagonal} if every projective $G$-representation
$^{g}V$ is a direct sum of $1$-dimensional projective representations. In this case the union of a nonzero element of each $1$-dimensional projective representation forms a basis of $V$, which is called a \emph{canonical basis} of $V$ in the paper. We point out that not like the Hopf case, here the condition of $G$ being abelian can \emph{NOT} guarantee
that every $V$ is diagonal. It turns out that all $V \in {_{ G}^{ G}\mathcal{YD}^{\Phi}}$ are diagonal if and only if $\Phi$ is an abelian cocycle, see \cite{Ng,MN}, which is different from Eilenberg-Mac Lane abelian cocycle \cite{EM}. We will discuss these cocycles in detail in Section 3.
\end{remark}

\subsection{Bosonization for pointed Majid algebras}
The theory of bosonization in a broader context can be found in \cite{majid} in terms of braided diagrams. For our purpose, it is enough to focus on the situation of graded pointed Majid algebras. For the sake of completeness and later applications, we record in the following some explicit concepts, notations and results without proof.

In the rest of the paper, we always assume that
$$\MM=\bigoplus_{i \in \N} \MM_{i}$$ is a coradically graded connected pointed Majid algebra with unit $1.$ So $\MM_0=(\k G,\Phi)$ for some
group $G$ together with a $3$-cocycle $\Phi$ on $G$.
 Let $\pi:\; \MM\to \MM_{0}$ be
the canonical projection. Then $\MM$ is a $kG$-bicomodule naturally via
$$\delta_{L}:=(\pi\otimes \id)\D,\;\;\;\;\delta_{R}:=(\id\otimes \pi)\D.$$
Thus there is a $G$-bigrading on $\MM$, that is,
$$\MM=\bigoplus_{g,h\in G}\;^{g}\MM^{h}$$
where $^{g}\MM^{h}=\{m\in \MM|\delta_{L}(m)=g\otimes m,\;\delta_{R}(m)=m\otimes h\}$. As stated in the last paragraph of the introductoion, we only deal with
homogeneous elements with respect to this $G$-bigrading in this subsection.
For example, whenever we write $\D(X)=X_1\otimes X_2,$ all $X,X_1,X_2$ are assumed homogeneous, and for any capital $X\in\; ^{g}\MM^{h}$, we use its lowercase $x$ to denote $gh^{-1}.$

Define the coinvariant subalgebra of $\MM$ by
$$\R:=\{m\in \MM|(\id\otimes \pi)\D(m)=m\otimes 1\}.$$ Clearly $1 \in \R.$
There is a $(\k G,\Phi)$-action on $\R$ via
\begin{equation}\label{eq2.13} f\triangleright X:=\frac{\Phi(fg,f^{-1},f)}{\Phi(f,f^{-1},f)}(f\cdot X)\cdot f^{-1} \end{equation}
for all $f,g\in G$ and $X\in {^{g}\R}$. Here $\cdot$ is the multiplication in $\MM$. Then $(\R,\delta_{L},\rhd)$ is a left-left Yetter-Drinfeld module over $(\k G,\Phi)$.

Moreover, there are several natural operations on $\R$ inherited from $\MM$ as follows:
\begin{eqnarray*}&\M:\;\R\otimes \R\to \R, \;\;\;\;\;(X,Y)\mapsto XY:=X\cdot Y;\\
&u \colon \k \to \R, \quad \lambda \mapsto \lambda 1; \\
&\D_{\R}:\;\R\to \R\otimes \R,\;\;\;\;X\mapsto \Phi(x_1,x_2,x_2^{-1})X_{1}\cdot x_{2}^{-1}\otimes X_{2};\\
&\e_{\R}:\;\R\to \k,\;\;\;\;\e_{\R}:=\e|_{\R};\\
&\S_{\R}:\;\R\to \R,\;\;\;\;X\mapsto \frac{1}{\Phi(x,x^{-1},x)}x\cdot \S(X).\end{eqnarray*}
Then it is routine to verify that $(\R, \M, u, \D_{\R}, \e_{\R}, \S_{\R})$ is a Hopf algebra in $^{G}_{G}\mathcal{YD}^{\Phi}$.

Conversely, let $H$ be a Hopf algebra in $_{G}^{G}\mathcal{YD}^{\Phi}.$ Since $H$ is a left $G$-comodule, there is a $G$-grading on $H$:
 $$H=\bigoplus_{x\in G} {^{x}H}$$
 where $^{x}H=\{X\in H|\delta_{L}(X)=x\otimes X\}.$ As before, we only need to deal with $G$-homogeneous elements.
 As a convention, homogeneous elements in $H$ are denoted by capital letters, say $X, Y, Z, \dots,$ and the associated degrees are denoted by their lower cases, say $x, y, z, \dots.$

 For our purpose, we also assume  that $H$ is $\mathbb{N}$-graded with $H_0=\k$. If $X\in H_{n}$, then we say that $X$ has length $n$. Moreover, we assume that both gradings are compatible in the sense that \[ H=\bigoplus_{g\in G} {^gH} =\bigoplus_{g\in G} \bigoplus_{n\in \mathbb{N}} {^{g}H_{n}}. \] For example, the Hopf algebra $\R$ in $_{G}^{G}\mathcal{YD}^{\Phi}$ considered above satisfies these assumptions as $\R=\bigoplus_{i\in \mathbb{N}}\R_{i}$ is coradically graded. In this case, we call dim$\R_1$ the rank of $\R$ and $\MM$. For any $X\in H$, we write its comultiplication as
 $$\D_{H}(X)=X_{(1)}\otimes X_{(2)}.$$
\begin{lemma}
Keep the assumptions on $H$ as above. Define on $H\otimes \k G$ a product by
\begin{equation}
(X\otimes g)(Y\otimes h)=\frac{\Phi(xg,y,h)\Phi(x,y,g)}{\Phi(x,g,y)\Phi(xy,g,h)}X(g\triangleright Y)\otimes gh,
\end{equation}
and a coproduct  by
\begin{equation}
\D(X\otimes g)=\Phi(x_{(1)},x_{(2)},g)^{-1}(X_{(1)}\otimes x_{(2)}g)\otimes (X_{(2)}\otimes g).
\end{equation}
Then $H\otimes \k G$ becomes a graded Majid algebra with a quasi-antipode $(\S,\alpha,\beta)$ given by
\begin{eqnarray}
&\S(X\otimes g)=\frac{\Phi(g^{-1},g,g^{-1})}{\Phi(x^{-1}g^{-1},xg,g^{-1})\Phi(x,g,g^{-1})}(1\otimes x^{-1}g^{-1})(\S_H(X)\otimes 1), \\
&\alpha(1\otimes g)=1,\ \ \ \alpha(X\otimes g)=0,  \\
&\beta(1\otimes g)=\Phi(g,g^{-1},g)^{-1},\ \ \beta(X\otimes g)=0,
\end{eqnarray}
here $g,h\in G$ and $X,Y$ are homogeneous elements of length  $\geq 1.$
\end{lemma}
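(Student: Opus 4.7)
The statement is the quasi-Hopf analogue of the Radford--Majid bosonization, adapted to the twisted setting where $H$ is a braided Hopf algebra in $^G_G\mathcal{YD}^\Phi$. My plan is to verify the Majid algebra axioms for $H\otimes\k G$ directly, exploiting the fact that every structural property of $H$ already incorporates the required $\Phi$-twists through the associativity constraint and the braiding described in Section 2.2, while the $\k G$-part carries its genuine $\Phi$-associator. The product, coproduct, and quasi-antipode formulas are then exactly what is needed to glue these two pieces together, with the scalar cocycle factors playing the role of the categorical associator on the mixed tensor factors.

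I would carry out the verification in three steps. First, check that the prescribed multiplication is associative up to $\Phi$ lifted from $\MM_0=(\k G,\Phi)$: expanding $\bigl((X\otimes g)(Y\otimes h)\bigr)(Z\otimes\ell)$ and its reassociated version produces cocycle factors that combine via the $3$-cocycle identity on $G$, via the fact that the $G$-action on $^yH$ is projective with $2$-cocycle $\widetilde{\Phi}_y$, and via the braided associativity of the product of $H$ (which itself involves $\Phi$ on degrees). Second, verify that $\D$ is coassociative up to $\Phi$ and a homomorphism with respect to the new product; the factor $\Phi(x_{(1)},x_{(2)},g)^{-1}$ in the coproduct is precisely what absorbs the associativity constraint incurred when the $\k G$-factor is transported across $H\otimes H$, and the compatibility between product and coproduct reduces, after cancellation of $\Phi$-scalars, to the corresponding categorical compatibility enjoyed by $H$ in $^G_G\mathcal{YD}^\Phi$. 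Third, verify the quasi-antipode axioms \eqref{eq2.5} and \eqref{eq2.6} with the explicit $\S,\alpha,\beta$: since $\S(X\otimes g)$ is, up to a $\Phi$-scalar, the product $(1\otimes x^{-1}g^{-1})(\S_H(X)\otimes 1)$, its verification splits into the antipode axiom for $(\k G,\Phi)$ and the antipode axiom for $H$ in the braided category, combined with one further cocycle simplification.

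The main obstacle is purely the bookkeeping of $3$-cocycle factors: each of the Majid axioms splits into a $\k G$-part governed by the cocycle condition on $\Phi$, and an $H$-part governed by the corresponding braided Hopf algebra axiom, glued by a scalar that one checks to be $1$ by repeated use of the cocycle identity and the definition of $\widetilde{\Phi}_g$ in (2.7). A cleaner, more conceptual approach is to read the lemma diagrammatically: $H\otimes\k G$ is the smash product of the braided Hopf algebra $H$ with the trivial quasi-Hopf algebra $(\k G,\Phi)$, and the scalars in the lemma are precisely the evaluations of the associativity constraint of $^G_G\mathcal{YD}^\Phi$ on the relevant graphical compositions. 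From this viewpoint the statement is a special case of general quasi-Hopf bosonization (after dualization to the Majid side), and the verification above merely records the scalar output of the well-known categorical argument.
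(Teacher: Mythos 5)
The paper states this lemma explicitly \emph{without proof}: the surrounding text in Section 2.3 reads ``we record in the following some explicit concepts, notations and results without proof,'' and appeals to Majid's general theory of bosonization in braided categories (the reference \texttt{[majid]} in the paper). So there is no internal proof to compare your attempt against; the paper's route is precisely the second, conceptual route you outline --- recognize $H\#\k G$ as the output of the categorical smash-product/bosonization construction and let the general theorem deliver the axioms, with the explicit scalar factors being the evaluations of the associativity constraint and braiding of $^G_G\mathcal{YD}^\Phi$. Your proposal is therefore consistent with what the authors intended.

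That said, as a self-contained argument your proposal remains a sketch in both branches. In the direct-verification branch, the ``bookkeeping of $3$-cocycle factors'' you defer is the entire mathematical content: one must actually show, for instance, that expanding $\bigl((X\otimes g)(Y\otimes h)\bigr)(Z\otimes\ell)$ and $(X\otimes g)\bigl((Y\otimes h)(Z\otimes\ell)\bigr)$ yields the associator $\Phi(gh\cdot\text{(degrees)},\dots)$ applied to group-likes only, using repeated instances of the $3$-cocycle identity, the definition of $\widetilde{\Phi}_g$, and the $\Phi$-twisted associativity of the product on $H$. Similarly, the quasi-antipode verification involves equations \eqref{eq2.5}--\eqref{eq2.6} of the paper, which mix $\S$, $\alpha$, $\beta$, and the associator in a five-fold coproduct; asserting that it ``splits into the antipode axiom for $(\k G,\Phi)$ and the antipode axiom for $H$'' is plausible but not self-evident without a computation or a diagram. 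In the conceptual branch, one still must check that the lemma's explicit scalars agree with what general bosonization produces, which requires writing out the tensor-functor coherence data for $^G_G\mathcal{YD}^\Phi$. Neither is done, so a referee would regard the proposal as a correct plan rather than a proof --- but given that the paper itself treats this as a known result to be recorded without proof, the level of detail you chose matches the paper's own.
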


In the following, by $H\# \k G$ we denote the resulting Majid algebra defined on $H\otimes \k G.$

\begin{proposition}\label{p4.5} Let $\MM$ and $\R$ be as before, and $\R\#\k G$ be the Majid algebra as defined in the previous proposition. Then the map
 $$F:\;\R\#\k G \to \MM,\;\;\;\;X\otimes g\mapsto Xg$$
 is an isomorphism of Majid algebras.
\end{proposition}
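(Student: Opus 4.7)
The plan is to verify that $F$ is a linear bijection and intertwines all three Majid-algebra structures (coalgebra, quasi-algebra, quasi-antipode). The operations on $\R\#\k G$ in the preceding lemma were defined precisely so as to make $F$ a morphism; thus the work is organized $3$-cocycle bookkeeping rather than conceptual.

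First I would establish bijectivity via the $G$-bigrading. Since $\pi\colon\MM\twoheadrightarrow\MM_0=(\k G,\Phi)$ is a Majid-algebra morphism, the coactions $\d_L=(\pi\otimes\id)\D$ and $\d_R=(\id\otimes\pi)\D$ are multiplicative, so the $G$-bigrading on $\MM$ is compatible with the product. In particular $\R=\bigoplus_{x\in G}{^x\MM^1}$, and right multiplication by $g$ gives a linear isomorphism ${^x\MM^1}\xrightarrow{\sim}{^{xg}\MM^g}$ whose inverse, up to a scalar supplied by the associator, is right multiplication by $g^{-1}$. Summing over $x,g\in G$ shows that $F$ is a linear bijection.

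Next I would check that $F$ is a coalgebra morphism. Since $\D_\MM$ is multiplicative and $\D(g)=g\otimes g$, we have $\D_\MM(Xg)=\D_\MM(X)(g\otimes g)$. Inverting the defining relation
\[\D_\R(X)=\Phi(x_1,x_2,x_2^{-1})\,X_1\cdot x_2^{-1}\otimes X_2\]
expresses $\D_\MM(X)=X_1\otimes X_2$ in terms of $\D_\R(X)=X_{(1)}\otimes X_{(2)}$; multiplying on the right by $g\otimes g$ and performing a quasi-associative shuffle of $x_{(2)}g$ then yields precisely the prescribed coproduct $\Phi(x_{(1)},x_{(2)},g)^{-1}(X_{(1)}\otimes x_{(2)}g)\otimes(X_{(2)}\otimes g)$ after applying $F\otimes F$.

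The main step, and the principal obstacle, is showing that $F$ is multiplicative, i.e.\ $(Xg)(Yh)=F\bigl((X\otimes g)(Y\otimes h)\bigr)$. Starting from $(Xg)(Yh)$ I would apply the quasi-associativity axiom to relocate $g$ past $Y$, producing $X((gY)h)$ multiplied by associator factors, then invert the definition (Equation 2.13)
\[g\triangleright Y=\frac{\Phi(gy,g^{-1},g)}{\Phi(g,g^{-1},g)}(gY)g^{-1}\]
to replace $gY$ by a scalar multiple of $(g\triangleright Y)g$, and finally absorb $(g\triangleright Y)(gh)$ via one more quasi-associativity. Collecting all accumulated scalars and invoking the $3$-cocycle identity
\[\Phi(b,c,d)\Phi(a,bc,d)\Phi(a,b,c)=\Phi(ab,c,d)\Phi(a,b,cd)\]
should reduce the product to $\frac{\Phi(xg,y,h)\Phi(x,y,g)}{\Phi(x,g,y)\Phi(xy,g,h)}X(g\triangleright Y)\cdot gh$. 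Every rewrite is dictated by the axioms, but the sheer number of associator factors to combine is what makes the bookkeeping delicate.

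Finally, compatibility of $F$ with the quasi-antipode $(\S,\alpha,\beta)$ is a direct verification. Since $\MM$ is coradically graded, both $\alpha$ and $\beta$ vanish off $\MM_0$, so they are pinned down by $\alpha(g)=1$ and $\beta(g)=\Phi(g,g^{-1},g)^{-1}$ on group-likes, matching the prescribed formulas. The antipode identity is engineered from axioms (2.5)--(2.6) and is checked on a general $X\otimes g$ by the same kind of cocycle manipulation used in the multiplicative step, now combined with $\S_\R(X)=\Phi(x,x^{-1},x)^{-1}\,x\cdot\S(X)$.
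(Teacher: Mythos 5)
The paper itself offers no proof of this proposition: Subsection 2.3 is introduced with the remark that the bosonization results are ``record[ed] $\ldots$ without proof,'' so there is nothing to compare against. Your outline is nonetheless correct and is the standard verification. The bijectivity argument via the $G$-bigrading is sound (and indeed $(g\triangleright Y)g=gY$, so right multiplication by $g^{-1}$ inverts it up to the associator scalar exactly as you say). For multiplicativity, tracking the scalars carefully: $(Xg)(Yh)=\frac{\Phi(g,y,h)\Phi(x,y,gh)}{\Phi(x,g,yh)\Phi(y,g,h)}\bigl(X(g\triangleright Y)\bigr)(gh)$, and two applications of the $3$-cocycle identity (with $(a,b,c,d)=(x,y,g,h)$ and then $(x,g,y,h)$, using commutativity of $G$) reduce this coefficient to the prescribed $\frac{\Phi(xg,y,h)\Phi(x,y,g)}{\Phi(x,g,y)\Phi(xy,g,h)}$, confirming the step you describe as ``the principal obstacle.'' The remaining coalgebra and quasi-antipode checks go through as you sketch, using that $\alpha,\beta$ vanish in positive degree because $\MM$ is coradically graded.
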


\subsection{Nichols algebras in $^{ G}_{ G}\mathcal{Y}\mathcal{D}^\Phi$}
Nichols algebras can be defined by various equivalent ways, see for example \cite{as2}. Here we adopt the defining method in terms of the universal property. Roughly, Nichols algebras are the analogue of the usual symmetric algebras in more general braided tensor categories.

Let $V$ be a nonzero object in $^{ G}_{ G}\mathcal{Y}\mathcal{D}^\Phi.$ By $T_{\Phi}(V)$ we denote the tensor algebra in $^{ G}_{ G}\mathcal{Y}\mathcal{D}^\Phi$ generated freely by $V.$ It is clear that $T_{\Phi}(V)$ is isomorphic to $\bigoplus_{n \geq 0}V^{\otimes \overrightarrow{n}}$ as a linear space, where $V^{\otimes \overrightarrow{n}}$ means
$\underbrace{(\cdots((}_{n-1}V\otimes V)\otimes V)\cdots \otimes V).$ This induces a natural $\mathbb{N}$-graded structure on $T_{\Phi}(V).$ Define a comultiplication on $T_{\Phi}(V)$ by $\Delta(X)=X\otimes 1+1\otimes X, \ \forall X \in V,$ a counit by $\varepsilon(X)=0,$ and an antipode by $S(X)=-X.$ These provide a graded Hopf algebra structure on $T_{\Phi}(V)$ in the braided tensor category $^{ G}_{ G}\mathcal{Y}\mathcal{D}^\Phi.$

\begin{definition}
The Nichols algebra $\B(V)$ of $V$ is defined to be the quotient Hopf algebra $T_{\Phi}(V)/I$ in $^{ G}_{ G}\mathcal{Y}\mathcal{D}^\Phi,$ where $I$ is the unique maximal graded Hopf ideal generated by homogeneous elements of degree greater than or equal to 2.
\end{definition}

To stress that our Nichols algebras may be nonassociative in some occasions, we will call an associative Nichols algebra, e.g. $\B(V)\in {^{ G}_{ G}\mathcal{Y}\mathcal{D}}$, a usual Nichols algebra. The following definition is used widely in this paper.

\begin{definition}
Let $V\in {^{ G}_{ G}\mathcal{Y}\mathcal{D}^\Phi}$ be a Yetter-Drinfeld module of diagonal type and $\{X_i|1\leq i\leq n\}$ be a canonical basis of $V,$ and $\{g_i|1\leq i\leq n\}$ is the corresponding degrees, that is $\delta_{L}(X_i)=g_i\otimes X_i$ for $1\leq i\leq n$. Then we call the subgroup $G'=\langle g_1,\cdots,g_n\rangle$ generated by $g_1,\ldots, g_n$ the support group of $V$, which is denoted by $G_{V}$.
\end{definition}
It is obvious that the definition does not depend on the choices of canonical bases and  $$G_V=G_{\B(V)}=G_{T_{\Phi}(V)}.$$

The twisting process for Majid algebras can be transferred to Nichols algebras directly. In fact, let $(V,\triangleright, \delta_{L})\in {^{ G}_{ G}\mathcal{Y}\mathcal{D}^\Phi},$  and $J$ a $2$-cochain of $G.$ Then we can define a new action $\triangleright_J$ of $G$ over $V$ by
\begin{equation}
g\triangleright_J X=\frac{J(g,x)}{J(x,g)}g\triangleright X
\end{equation}
 for $X\in V$ and $g\in G.$  We denote $(V, \triangleright_{J},\delta_L)$ by $V^{J}$ and by definition we have $V^J\in {^{ G}_{ G}\mathcal{Y}\mathcal{D}^{\Phi\ast \partial(J)}}.$ Moreover there is a tensor equivalence $(F_J,\varphi_0,\varphi_2):\ ^{ G}_{ G}\mathcal{Y}\mathcal{D}^\Phi \to {^{ G}_{ G}\mathcal{Y}\mathcal{D}^{\Phi\ast\partial (J)}}$ which takes $V$ to $V^J$ and $$\varphi_2(U,V):(U\otimes V)^J\to U^J\otimes V^J,\ \ Y\otimes Z\mapsto J(y,z)^{-1}Y\otimes Z$$ for $Y\in U,\  Z\in V.$

 Let $\B(V)$ be a usual Nichols algebra in $^{ G}_{ G}\mathcal{Y}\mathcal{D},$ then it is clear that $\B(V)^J$ is a Hopf algebra in $^{ G}_{ G}\mathcal{Y}\mathcal{D}^{\partial J}$ with multiplication $\circ$ determined by
\begin{equation}
X\circ Y=J(x,y)XY
\end{equation} for all homogenous elements $X,Y\in \B(V),$ here $x=\deg X, y=\deg Y.$ Using the same terminology as Majid algebras or quasi-Hopf algebras, we call $\B(V)$ and $\B(V)^{J}$ are twist equivalent.
The following result is obvious, but important for our exposition.
\begin{lemma}\label{L2.12}
The twisting $\B(V)^J$ of $\B(V)$ is a Nichols algebra in $^{ G}_{ G}\mathcal{Y}\mathcal{D}^{\partial J}$ and $\B(V)^J\cong \B(V^J)$.
\end{lemma}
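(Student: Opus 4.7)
The plan is to push $\B(V)$ through the tensor equivalence $F_J\colon {^G_G\mathcal{YD}} \to {^G_G\mathcal{YD}^{\partial J}}$ recorded above and to show that the resulting object matches $\B(V)^J$ on the nose, and matches $\B(V^J)$ by uniqueness of Nichols algebras. The first step is to verify that $F_J(\B(V))$, equipped with its Hopf algebra structure in ${^G_G\mathcal{YD}^{\partial J}}$ inherited via $\varphi_2$, is precisely the twisted object $\B(V)^J$. Using the explicit formula $\varphi_2(Y \otimes Z) = J(y,z)^{-1} Y \otimes Z$, the transported multiplication $F_J(\mu) \circ \varphi_2^{-1}$ sends $X \otimes Y$ to $J(x,y)\,XY$, which is exactly the product $X \circ Y$ defining $\B(V)^J$. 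Normalization of $J$ ensures the unit is unaffected, and the analogous calculation $\varphi_2 \circ F_J(\Delta)$ for the coproduct shows that elements primitive in $\B(V)$ remain primitive after twisting. This already yields the first assertion that $\B(V)^J$ is a connected $\mathbb{N}$-graded braided Hopf algebra in ${^G_G\mathcal{YD}^{\partial J}}$.

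Next I would invoke the intrinsic characterization of Nichols algebras: in any braided tensor category, $\B(W)$ is, up to isomorphism, the unique $\mathbb{N}$-graded connected braided Hopf algebra $H = \bigoplus_{n\geq 0} H_n$ such that $H_1 \cong W$, $P(H) = H_1$, and $H$ is generated as an algebra by $H_1$. Every one of these properties is preserved by a braided tensor equivalence. Since $F_J(V) = V^J$, the Hopf algebra $F_J(\B(V)) \cong \B(V)^J$ satisfies this characterization for $V^J$ and must therefore be isomorphic to $\B(V^J)$.

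The only delicate point is the coherence bookkeeping around $\varphi_2$—in particular, confirming that $F_J$ is actually a \emph{braided} tensor equivalence and that $\varphi_2$ correctly intertwines multiplication, comultiplication, and the braiding on all iterated tensor powers so that the primitive-preservation argument genuinely applies. Once this routine but careful verification is in place, both conclusions of the lemma follow by the formal categorical argument just outlined.
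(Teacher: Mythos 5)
Your argument is correct and is exactly the intended reasoning behind the paper's remark that the lemma is ``obvious'' (the paper gives no proof). The computation that $\varphi_2$ satisfies the monoidal coherence with the associator built from $\partial J$, and that it intertwines the braidings $c$ and $c^{\partial J}$, is a short verification that does go through: for the pentagon one checks that $J(x,yz)^{-1}J(y,z)^{-1}$ equals $J(xy,z)^{-1}J(x,y)^{-1}\cdot\partial J(x,y,z)^{-1}$, which is the definition of $\partial J$; for the braiding one checks that $J(f,e)^{-1}$ picked up on the left matches $J(e,f)^{-1}\cdot J(e,f)/J(f,e)$ picked up on the right. The transported multiplication is $X\circ Y=J(x,y)XY$, matching the paper's definition of $\B(V)^J$ on the nose; normalization of $J$ keeps the unit fixed and keeps primitives primitive (indeed $\varphi_2$ is invertible on each graded piece, so the quadratic part of $\Delta$ vanishes in $\B(V)^J$ iff it vanishes in $\B(V)$, giving $P(F_J(H))=F_J(P(H))$). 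With the braided tensor equivalence in hand, the categorical characterization of $\B(W)$ as the unique connected $\mathbb{N}$-graded braided Hopf algebra generated in degree one with all primitives in degree one (equivalent to the paper's maximal-Hopf-ideal definition) transports term by term, giving $\B(V)^J\cong\B(V^J)$. No gap.
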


\subsection{Arithmetic root systems and generalized Dynkin diagrams.}\label{subsec2.5}
Arithmetic root systems are invariants of Nichols algebras of diagonal type with certain finiteness property. A complete classification of arithmetic root systems was given by Heckenberger \cite{H3}. This is a crucial ingredient for the classification program of finite-dimensional pointed Hopf algebras, and turns out to be equally important in the broader situation of pointed Majid algebras.

Suppose $\B(V)$ is a usual Nichols algebra of diagonal type in $^{ G}_{ G}\mathcal{Y}\mathcal{D}.$ Let $\{X_i|1\leq i\leq n\}$ be a canonical basis of $V$ with $\delta_l(X_i)=h_i\otimes X_i.$ The structure constants of $\B(V)$ are $\{q_{ij}|1\leq i,j\leq n\}$ such that $h_i\triangleright X_j=q_{ij}X_j.$
Let $E=\{e_i|1\leq i\leq d\}$ be a canonical basis of $\Z^n,$ and $\chi$ be a bicharacter of $\Z^n$ determined by $\chi(e_i,e_j)=q_{ij}.$ As defined in \cite[Sec.3]{H0}, $\triangle^{+}(\B(V))$ is the set of degrees of the (restricted) Poincare-Birkhoff-Witt generators counted with multiplicities and $\bigtriangleup(\B(V)):= \bigtriangleup^{+}(\B(V))\bigcup -\bigtriangleup^{+}(\B(V))$, which is called the root system of $\B(V)$. Moreover, the triple $(\bigtriangleup=\bigtriangleup(\B(V),\chi, E))$ is called an arithmetic root system of $\B(V)$ if the corresponding Weyl groupoid $W_{\chi, E}$ is full and finite, see \cite[Sec.2,3]{h2}. In this case, we denote this arithmetic root system by $\bigtriangleup(\B(V))_{\chi,E}$ for brevity.
If there is another arithmetic root system $\bigtriangleup_{\chi',E'},$ and an isomorphism $\tau:\Z^n\to \Z^n$ such that
$$\tau(E)=E',\ \ \ \ \chi'(\tau(e),\tau(e))=\chi(e,e),$$
$$\chi'(\tau(e_1),\tau(e_2))\chi'(\tau(e_2),\tau(e_1))=\chi(e_1,e_2)\chi'(e_2,e_1),$$ then we say that $\bigtriangleup_{\chi,E}$ and $\bigtriangleup_{\chi',E'}$ are twist equivalent.

A generalized Dynkin diagram is an invariant of arithmetic root systems, and it can determine arithmetic root systems up to twist equivalence.

\begin{definition}
The generalized Dynkin diagram of an arithmetic root system $\bigtriangleup_{\chi,E}$ is a nondirected graph $\mathcal{D}_{\chi,E}$ with the following properties:
\begin{itemize}
\item[1)]There is a bijective map $\phi$ from $I=\{ 1, 2, \dots, d \}$ to the set of vertices of $\mathcal{D}_{\chi,E}.$
\item[2)]For all $1\leq i\leq d,$ the vertex $\phi(i)$ is labelled  by $q_{ii}.$
\item[3)]For all $1\leq i,j\leq d,$ the number  $n_{ij}$ of edges between $\phi(i)$ and $\phi(j)$ is either $0$ or $1.$ If $i=j$ or $q_{ij}q_{ji}=1$ then $n_{ij}=0,$ otherwise $n_{ij}=1$ and the edge is labelled by $\widetilde{q_{ij}}=q_{ij}q_{ji}$ for $1\leq i<j\leq n.$
\end{itemize}
\end{definition}
An arithmetic root system is called connected provided the corresponding generalized Dynkin diagram $\mathcal{D}_{\chi,E}$ is connected. All the connected arithmetic root systems are classified and the corresponding generalized Dynkin diagrams are listed in \cite{h,H3}.

\section{Normalized $3$-cocycles over finite abelian groups}
The aim of this section is threefold: Firstly, we will give a unified formula for $3$-cocycles over a finite abelian group; Secondly, we want to develop a method to determine when a $3$-cocycle is a coboundary;  At last, we want to discuss the ``resolution" problem, i.e., for every  $3$-cocycle $\Phi$ on $\mathbb{Z}_{\mathbbm{m}_1}\times \cdots\times  \mathbb{Z}_{\mathbbm{m}_n},$ is there a bigger abelian group $G$ together with a group epimorphism $\pi \colon G \to\mathbb{Z}_{\mathbbm{m}_1}\times \cdots\times  \mathbb{Z}_{\mathbbm{m}_n}$ such that the pull-back $\pi^{*}(\Phi)$ is a
coboundary on $G$? From which, we find that there are essential differences between different $3$-cocycles and we get the definition of abelian cocycles again, which was already discussed by Ng \cite{Ng} and Mason-Ng \cite{MN}.
\subsection{A unified formula for $3$-cocycles} Let $G$ be a group and $(B_{\bullet},\partial_{\bullet})$ its bar resolution. By applying $\Hom_{\mathbbm{Z}G}(-,\k^{\ast})$
we get a complex $(B^{\ast}_{\bullet},\partial^{\ast}_{\bullet})$, where $\k^{\ast}=\k\setminus \{0\}$ is a trivial
$G$-module.

Now let $G$ be a finite abelian group. Thus $G\cong \mathbbm{Z}_{m_{1}}\times\cdots \times\mathbbm{Z}_{m_{k}}$.
For every $\mathbbm{Z}_{m_{i}}$, we fix a generator $g_{i}$  throughout this paper for $1\leq i\leq k$. It is known, see e.g. \cite[Sec. 6.2]{Wei}, that the following periodic sequence is a projective resolution for the trivial $\mathbbm{Z}_{m_{i}}$-module $\mathbbm{Z}:$
\begin{equation}\label{eq;3}\cdots\longrightarrow \mathbbm{Z}\mathbbm{Z}_{m_{i}}\stackrel{T_{i}}\longrightarrow
\mathbbm{Z}\mathbbm{Z}_{m_{i}}\stackrel{N_{i}}\longrightarrow\mathbbm{Z}\mathbbm{Z}_{m_{i}}\stackrel{T_{i}}\longrightarrow
\mathbbm{Z}\mathbbm{Z}_{m_{i}}\stackrel{N_{i}}\longrightarrow
\mathbbm{Z}\longrightarrow 0,\end{equation}
where $T_{i}=g_{i}-1$ and $N_{i}=\sum_{j=0}^{m_{i}-1}g_{i}^{j}$.

We construct the tensor product of the above periodic resolutions for
$G$. Let $K_{\bullet}$ be the following complex of projective (in fact,
free) $\mathbbm{Z}G$-modules. For
each sequence  $a_{1},\ldots,a_{k}$ of nonnegative integers, let $\Psi(a_{1},\ldots,a_{k})$ be a free
generator in degree $a_{1}+\cdots+a_{k}$. Define:
$$K_{m}:=\bigoplus_{a_{1}+\cdots+a_{k}=m} (\mathbbm{Z}G)\Psi(a_{1},\ldots,a_{k}),$$
and
$$d_{i}(\Psi(a_{1},\ldots,a_{k}))=\left \{
\begin{array}{lll} 0, &\;\;\;\;a_{i}=0,
\\ (-1)^{\sum_{l<i}a_{l}}N_{i}\Psi(a_{1},\ldots,a_{i}-1,\ldots,a_{k}), &  \ 0\neq a_{i}\ \mathrm{even},\\ (-1)^{\sum_{l<i}a_{l}}T_{i}\Psi(a_{1},\ldots,a_{i}-1,\ldots,a_{k}), & \  a_{i}\ \textrm{odd},
\end{array}\right.$$
for $1\leq i\leq k$. The differential $d$ is defined to be $d_{1}+\cdots +d_{k}$.

\begin{lemma}\label{l1.3} $(K_{\bullet},d)$ is a free resolution of the trivial $\mathbbm{Z}G$-module $\mathbbm{Z}$.
\end{lemma}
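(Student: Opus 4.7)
The plan is to recognize $(K_\bullet, d)$ as the total complex of the tensor product (over $\mathbbm{Z}$) of the $k$ periodic resolutions
\[
P^{(i)}_\bullet: \cdots \stackrel{T_i}{\longrightarrow} \mathbbm{Z}\mathbbm{Z}_{m_i} \stackrel{N_i}{\longrightarrow} \mathbbm{Z}\mathbbm{Z}_{m_i} \stackrel{T_i}{\longrightarrow} \mathbbm{Z}\mathbbm{Z}_{m_i} \stackrel{N_i}{\longrightarrow} \mathbbm{Z}\mathbbm{Z}_{m_i}
\]
described in \eqref{eq;3}, and then to appeal to the Künneth theorem. Concretely, using the canonical identification $\mathbbm{Z}G \cong \mathbbm{Z}\mathbbm{Z}_{m_1} \otimes_{\mathbbm{Z}} \cdots \otimes_{\mathbbm{Z}} \mathbbm{Z}\mathbbm{Z}_{m_k}$, the free generator $\Psi(a_1, \ldots, a_k)$ of $K_{a_1+\cdots+a_k}$ corresponds to $e^{(1)}_{a_1} \otimes \cdots \otimes e^{(k)}_{a_k}$, where I write $e^{(i)}_j$ for the free generator of $P^{(i)}_j$. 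The signs $(-1)^{\sum_{l<i}a_l}$ in the definition of $d_i$ are precisely the Koszul signs for the total differential of a tensor product of chain complexes, and the augmentation on $K_0 = \mathbbm{Z}G$ is the counit $\varepsilon$ sending every group element to $1$.

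Granting this identification, the verification that $(K_\bullet, d)$ is a complex reduces to two standard facts: $d_i^2 = 0$, because $T_i N_i = N_i T_i = 0$ in $\mathbbm{Z}\mathbbm{Z}_{m_i}$ by virtue of $g_i^{m_i} = 1$; and the Koszul signs ensure $d_i d_j + d_j d_i = 0$ for $i \neq j$. Each direct summand $\mathbbm{Z}G \cdot \Psi(a_1, \ldots, a_k)$ is then free of rank one because the $\mathbbm{Z}$-tensor product of free rank-one modules over the various $\mathbbm{Z}\mathbbm{Z}_{m_i}$ is a free rank-one $\mathbbm{Z}G$-module, so each $K_m$ is indeed $\mathbbm{Z}G$-free.

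For exactness I would apply the Künneth formula over the principal ideal domain $\mathbbm{Z}$: each $P^{(i)}_\bullet$ is a bounded-below complex of free (hence flat) $\mathbbm{Z}$-modules with $H_0(P^{(i)}_\bullet) = \mathbbm{Z}$ and $H_n(P^{(i)}_\bullet) = 0$ for $n \geq 1$, so all Tor corrections vanish and
\[
H_m(K_\bullet) \cong \bigoplus_{a_1 + \cdots + a_k = m} H_{a_1}(P^{(1)}_\bullet) \otimes_{\mathbbm{Z}} \cdots \otimes_{\mathbbm{Z}} H_{a_k}(P^{(k)}_\bullet),
\]
which equals $\mathbbm{Z}$ for $m = 0$ and vanishes for $m \geq 1$, matching the augmentation $\varepsilon$. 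The main bookkeeping nuisance is ensuring that the explicit signs defining $d_i$ match the standard Koszul convention used in the total-complex identification; once that is pinned down the argument is formal. A completely elementary alternative avoiding Künneth is induction on $k$: for $k=1$ the complex is literally \eqref{eq;3}, and the inductive step is the classical statement that tensoring a free resolution of $\mathbbm{Z}$ with the flat acyclic complex $P^{(k+1)}_\bullet$ again produces a free resolution of $\mathbbm{Z}$ over the enlarged group ring.
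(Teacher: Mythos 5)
Your argument is precisely the one the paper gives: identify $(K_\bullet,d)$ with the tensor product of the periodic resolutions \eqref{eq;3} and invoke the K\"unneth formula for complexes. You flesh out the Koszul-sign bookkeeping, the flatness needed for the Tor terms to vanish, and an inductive alternative, but the core route is identical to the paper's.
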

\begin{proof} Observer that $(K_{\bullet},d)$ is exactly the tensor product of the complexes \eqref{eq;3}. Thus
the lemma follows from   the K\"unneth formula for
complexes, see \cite[(3.6.3)]{Wei}.
\end{proof}

For convenience, we fix the following notations. For any $1\leq r\leq k$, define $\Psi_{r}:=\Psi(0,\ldots,1,\ldots,0)$ where $1$ lies in the $r$-th position.
For any $1\leq r\leq s\leq k$, define $\Psi_{r,s}:=\Psi(0,\ldots,1,\ldots,1,\ldots,0)$ where $1$ lies in both the $r$-th and the $s$-th position if $r<s$ and
 $\Psi_{r,r}:=\Psi(0,\ldots,2,\ldots,0)$ where $2$ lies in the $r$-th position. Similarly, one can define $\Psi_{r,s,t},\Psi_{r,s,s},\Psi_{r,r,s}$ and $\Psi_{r,r,r}$ for $1\leq r\leq k$, $1\leq r<s\leq k$ and $1\leq r<s<r\leq k$. One could  even define $\Psi_{i,j,s,t},\Psi_{i,i,j,s}$, $\Psi_{i,j,s,s}$, $\Psi_{i,j,j,s}$, $\Psi_{i,i,j,j}$, $\Psi_{i,i,i,j}$, $\Psi_{i,j,j,j}$,
 and $\Psi_{i,i,i,i}$  for $1\leq i\leq k$, $1\leq i<j\leq k$, $1\leq i<j<s\leq k$ and $1\leq i<j<s<t\leq k$ respectively.
 Now it it clear that any cochain $f\in \Hom_{\mathbbm{Z}G}(K_{3},k^{\ast})$ is uniquely determined by its values on $\Psi_{r,s,t},\Psi_{r,s,s},\Psi_{r,r,s}$
 and $\Psi_{r,r,r}$ for $1\leq r\leq k$, $1\leq r<s\leq k$ and $1\leq r<s<t\leq k$.  For such numbers, we let $f_{r,s,t}=f(\Psi_{r,s,t}),f_{r,s,s}=f(\Psi_{r,s,s}),f_{r,r,s}=f(\Psi_{r,r,s})$ and $f_{r,r,r}=f(\Psi_{r,r,r})$.

\begin{lemma}\label{l1.4} The $3$-cochain $f\in \Hom_{\mathbbm{Z}G}(K_{3},\k^{\ast})$ is a cocycle if and only if for all $1\leq r\leq k$, $1\leq r<s\leq k$ and $1\leq r<s<t\leq k$,
\begin{equation}\label{eq;1.4} f_{r,r,r}^{m_{r}}=1,\;\;f_{r,s,s}^{m_{r}}f_{r,r,s}^{m_{s}}=1,\;\;f_{r,s,t}^{m_{r}}=f_{r,s,t}^{m_{s}}=f_{r,s,t}^{m_{t}}=1.
\end{equation}
\end{lemma}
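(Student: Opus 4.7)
The plan is to compute $\partial^{*}f(\Psi) = f(d\Psi)$ on each free generator $\Psi$ of $K_{4}$ and show the resulting equations reproduce exactly \eqref{eq;1.4}. Two elementary observations do most of the work: since $\k^{\ast}$ carries the trivial $G$-action and $f$ is $\mathbbm{Z}G$-linear with $\k^{\ast}$ written multiplicatively, for every $\Psi'\in K_{3}$ one has $f(g_{i}\Psi')=f(\Psi')$, hence
\[
f(N_{i}\Psi')=f(\Psi')^{m_{i}}\qquad\text{and}\qquad f(T_{i}\Psi')=f(g_{i}\Psi')\,f(\Psi')^{-1}=1.
\]
Consequently a summand $d_{i}(\Psi)$ contributes trivially to $f(d\Psi)$ whenever $a_{i}=0$ or $a_{i}$ is odd, so only the \emph{even positive} slots of a generator $\Psi(a_{1},\ldots,a_{k})$ with $\sum a_{i}=4$ can produce a nontrivial constraint.

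Next I would run through the free generators of $K_{4}$ according to the shape of the nonzero entries of $(a_{1},\ldots,a_{k})$. The single-slot shape $(4)$ at position $r$ gives $d\Psi = N_{r}\Psi_{r,r,r}$, yielding $f_{r,r,r}^{m_{r}}=1$. For the two-slot shape $(2,2)$ at positions $r<s$, both slots contribute $N$-terms with signs $(-1)^{0}$ and $(-1)^{a_{r}}=(-1)^{2}$, so $d\Psi = N_{r}\Psi_{r,s,s}+N_{s}\Psi_{r,r,s}$, which produces $f_{r,s,s}^{m_{r}}f_{r,r,s}^{m_{s}}=1$. For the three-slot shapes $(2,1,1),(1,2,1),(1,1,2)$ at positions $r<s<t$, each generator has a unique even slot contributing a single $N$-term with sign $(-1)^{0},(-1)^{1},(-1)^{2}$ respectively; using $f(-x)=f(x)^{-1}$ collapses these to $f_{r,s,t}^{m_{r}}=1$, $f_{r,s,t}^{m_{s}}=1$, and $f_{r,s,t}^{m_{t}}=1$. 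The remaining shapes $(3,1),(1,3)$, and $(1,1,1,1)$ consist of odd or zero entries only, so they impose no condition.

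Since the generators $\Psi(a_{1},\ldots,a_{k})$ with $\sum a_{i}=4$ form a free $\mathbbm{Z}G$-basis of $K_{4}$, the cocycle condition $f\circ d=1$ is equivalent to the conjunction of the relations computed above, which is precisely \eqref{eq;1.4}. The only mildly delicate point is the sign bookkeeping from the factor $(-1)^{\sum_{l<i}a_{l}}$ in the definition of $d_{i}$; because $f$ takes values in the abelian group $\k^{\ast}$, each sign either cancels (as in the $(2,2)$ case) or is absorbed by inverting the relevant value, so no real obstacle arises.
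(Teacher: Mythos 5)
Your proof is correct and is essentially the same argument the paper gives: both reduce to the observations that $T_i$ acts trivially on $\k^{\ast}$ while $N_i$ raises values to the $m_i$-th power, and both then enumerate the degree-$4$ generators (you by the shape of the nonzero entries, the paper by the index pattern $i\le j\le s\le t$) to extract the same relations with the same sign bookkeeping. No discrepancies.
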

\begin{proof} The proof follows by direct computations. By definition, the cochain $f$ is a $3$-cocycle if and only if $1=d^{\ast}(f)(\Psi_{i,j,s,t})=f(d(\Psi_{i,j,s,t}))$ for all
 $1\leq i\leq j\leq s\leq t\leq k$.
For any $a\in \k^{\ast}$, it is clear that $T_{i}\cdot a=1$ since $\k^{\ast}$ is considered as a trivial $G$-module. Therefore we only need to consider the condition $1=d^{\ast}(f)(\Psi_{i,j,s,t})$
in the cases: $i=j=s=t$, $i=j<s<t$, $i<j=s<t$, $i<j<s=t$ and $i=j<s=t$ respectively. In case $i=j=s=t$, we have $1=d^{\ast}(f)(\Psi_{i,i,i,i})=f(N_{i}\Psi_{i,i,i})=N_{i}\cdot f_{i,i,i}=f_{i,i,i}^{m_{i}}$.
Similarly, if  $i=j<s<t$, we have $f_{i,s,t}^{m_{i}}=1$. If $i<j=s<t$, then we have $f_{i,j,t}^{-m_{j}}=1$. In case $i<j<s=t$, we then have $f_{i,j,s}^{m_{s}}=1$. Finally, if $i=j<s=t$, we have
$f_{i,s,s}^{m_{i}}f_{i,i,s}^{m_{s}}=1$.  Now it is easy to see  that these relations are  the same as in Equation \eqref{eq;1.4}.
\end{proof}

\begin{lemma}\label{l1.5}The $3$-cochain $f\in \Hom_{\mathbbm{Z}G}(K_{3},\k^{\ast})$ is a coboundary if and only if for all $1\leq i<j\leq k$, there are $g_{i,j}\in \k^{\ast}$ such that
\begin{equation}\label{eq3.3} f_{i,i,j}=g_{i,j}^{m_{i}},\;\;f_{i,j,j}=g_{i,j}^{-m_{j}},\ {\rm and}\ \  f_{l,l,l}=1\;\;f_{r,s,t}=1,
\end{equation} for $1\leq l\leq k$, and $1\leq r<s<t\leq k$.
\end{lemma}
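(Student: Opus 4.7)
The plan is to compute the coboundary map $d^{\ast}$ on 2-cochains completely explicitly and then read off which 3-cochains lie in its image. A 2-cochain $h \in \Hom_{\mathbbm{Z}G}(K_2,\k^{\ast})$ is determined by the values $h_{i,i} := h(\Psi_{i,i})$ for $1\le i\le k$ together with $h_{i,j} := h(\Psi_{i,j})$ for $1\le i<j\le k$, so the strategy reduces to evaluating $d^{\ast}(h)$ on the four types of free generators of $K_3$, namely $\Psi_{l,l,l}$, $\Psi_{r,s,t}$ with $r<s<t$, $\Psi_{r,r,s}$ with $r<s$, and $\Psi_{r,s,s}$ with $r<s$.

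The key simplification is that $\k^{\ast}$ is a trivial $G$-module, so $T_i = g_i-1$ acts as the identity minus the identity on any value and contributes a factor $1$, while $N_i = \sum_{j=0}^{m_i-1} g_i^j$ acts as raising to the $m_i$-th power. Using the differential defined in the statement preceding Lemma~\ref{l1.3}, I would compute
\begin{align*}
d(\Psi_{l,l,l}) &= T_l\,\Psi_{l,l}, \\
d(\Psi_{r,s,t}) &= T_r\,\Psi_{s,t} - T_s\,\Psi_{r,t} + T_t\,\Psi_{r,s}, \\
d(\Psi_{r,r,s}) &= N_r\,\Psi_{r,s} + T_s\,\Psi_{r,r}, \\
d(\Psi_{r,s,s}) &= T_r\,\Psi_{s,s} - N_s\,\Psi_{r,s},
\end{align*}
paying careful attention to the sign $(-1)^{\sum_{l<i} a_l}$ in $d_i$. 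Applying $h$ and using triviality of the action then yields $(d^{\ast}h)(\Psi_{l,l,l}) = 1$, $(d^{\ast}h)(\Psi_{r,s,t}) = 1$, $(d^{\ast}h)(\Psi_{r,r,s}) = h_{r,s}^{m_r}$, and $(d^{\ast}h)(\Psi_{r,s,s}) = h_{r,s}^{-m_s}$.

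For the ``only if'' direction, this immediately forces $f_{l,l,l} = 1$, $f_{r,s,t} = 1$, and the existence of $g_{i,j} := h_{i,j}$ satisfying \eqref{eq3.3}. For the converse, given scalars $g_{i,j}$ as in the statement, I would simply \emph{define} a 2-cochain $h$ by setting $h(\Psi_{i,j}) := g_{i,j}$ for $i<j$ and $h(\Psi_{i,i}) := 1$ (any choice works, since this datum does not enter $d^{\ast}h$), and then the above formulas show $d^{\ast}h = f$.

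Since everything reduces to four explicit differentials, there is no serious obstacle; the only thing to be careful about is bookkeeping of the signs $(-1)^{\sum_{l<i} a_l}$ and correctly matching the notational convention (e.g.\ whether the repeated index in $\Psi_{r,r,s}$ is the one of degree $2$), so that the exponents $m_r$ and $-m_s$ end up on the correct subscripts in \eqref{eq3.3}.
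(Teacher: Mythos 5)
Your proposal is correct and follows essentially the same line of reasoning as the paper: compute $d^{\ast}$ on the four generator types $\Psi_{l,l,l}$, $\Psi_{r,s,t}$, $\Psi_{r,r,s}$, $\Psi_{r,s,s}$ using the triviality of the $T_i$-action and the fact that $N_i$ raises to the $m_i$-th power, then read off the conditions. Your four differential formulas and sign bookkeeping agree with the paper's computation; you merely spell out the converse direction (defining $h$ from the given $g_{i,j}$) a bit more explicitly than the paper does.
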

\begin{proof} By definition, $f$ is a coboundary if and only if $f=d^{\ast}(g)$ for some $2$-cochain $g\in \Hom_{\mathbbm{Z}G}(K_{2},\k^{\ast})$. For any $1\leq i\leq j\leq k$,
let $g_{i,j}:=g(\Psi_{i,j})$. Since $T_{l}\cdot a=1$ for any $a\in \k^{\ast}$, we have $d^{\ast}(g)(\Psi_{r,s,t})=d^{\ast}(g)(\Psi_{l,l,l})=1$ for $1\leq r<s<t\leq k$ and $1\leq l\leq k$. Now for all $1\leq i<j\leq k$, $f_{i,i,j}=d^{\ast}(g)(\Psi_{i,i,j})=g(N_{i}\Psi_{i,j}+T_{j}\Psi_{i,i})=g_{i,j}^{m_{i}}$ and $f_{i,j,j}=d^{\ast}(g)(\Psi_{i,j,j})=g(T_{i}\Psi_{j,j}-N_{j}\Psi_{i,j})=g_{i,j}^{-m_{j}}$.
\end{proof}

For a set of natural numbers $s_{1},\ldots,s_{t},$ by $(s_{1},\ldots,s_{t})$ we denote their greatest common divisor.

\begin{proposition}
$$\H^{3}(G,\k^{\ast})\cong \prod_{i=1}^{n}\mathbb{Z}_{m_{i}}\times \prod_{1\leq i<j\leq n}^{n}\mathbb{Z}_{(m_{i},m_{j})}\times \prod_{1\leq i<j<k\leq n}^{n}\mathbb{Z}_{(m_{i},m_{j},m_{k})}.$$
\end{proposition}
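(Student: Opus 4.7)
The plan is to exploit the two preceding lemmas, which describe 3-cocycles and 3-coboundaries purely via their values on the free generators of $K_{3}$. These generators split into three disjoint classes: the diagonal ones $\Psi_{r,r,r}$ for $1\le r\le n$, the mixed-pair ones $\{\Psi_{i,i,j},\Psi_{i,j,j}\}$ for $i<j$, and the triple ones $\Psi_{r,s,t}$ for $r<s<t$. Crucially, every relation in those lemmas involves values within only a single such class, so the cocycle group, the coboundary group, and hence $\H^{3}(G,\k^{\ast})$ itself, factor as a direct product indexed by these three classes. It therefore suffices to identify each factor separately.

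The diagonal and triple factors are immediate. For each $r$, the only cocycle constraint is $f_{r,r,r}^{m_{r}}=1$ and no nontrivial coboundary is imposed, yielding $\mu_{m_{r}}\cong\mathbb{Z}_{m_{r}}$. For each triple $r<s<t$, the three equalities $f_{r,s,t}^{m_{r}}=f_{r,s,t}^{m_{s}}=f_{r,s,t}^{m_{t}}=1$ force $f_{r,s,t}\in\mu_{(m_{r},m_{s},m_{t})}$, and again no coboundary is imposed, yielding $\mathbb{Z}_{(m_{r},m_{s},m_{t})}$.

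The heart of the proof, and the only step I expect to require genuine work, is the mixed-pair factor. Fix $i<j$, write $d=(m_{i},m_{j})$ and $m_{i}=dM_{i}$, $m_{j}=dM_{j}$ with $(M_{i},M_{j})=1$, and set
\[
Z_{ij}=\{(a,b)\in(\k^{\ast})^{2}:b^{m_{i}}a^{m_{j}}=1\},\qquad B_{ij}=\{(g^{m_{i}},g^{-m_{j}}):g\in\k^{\ast}\}.
\]
Since $(a^{M_{j}}b^{M_{i}})^{d}=a^{m_{j}}b^{m_{i}}=1$ on $Z_{ij}$, the assignment
\[
\chi_{ij}\colon Z_{ij}\to\mu_{d},\qquad (a,b)\mapsto a^{M_{j}}b^{M_{i}},
\]
is a well-defined homomorphism that clearly vanishes on $B_{ij}$. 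To finish I would check that $\chi_{ij}$ is surjective and that its kernel is exactly $B_{ij}$. Surjectivity follows by setting $b=1$ and picking $a\in\mu_{m_{j}}$ with $a^{M_{j}}=\zeta$, which is possible because the $M_{j}$-th power map on $\mu_{m_{j}}=\mu_{dM_{j}}$ has image of order $d$, namely $\mu_{d}$. For the converse inclusion $\ker\chi_{ij}\subseteq B_{ij}$, given $(a,b)\in\ker\chi_{ij}$ I would first use divisibility of $\k^{\ast}$ to choose $g_{0}\in\k^{\ast}$ with $g_{0}^{m_{i}}=a$; a short calculation shows $\zeta:=g_{0}^{m_{j}}b$ lies in $\mu_{M_{i}}$, and since the map $\eta\mapsto\eta^{m_{j}}$ sends $\mu_{m_{i}}$ onto $\mu_{M_{i}}$, one finds $\eta\in\mu_{m_{i}}$ with $\eta^{m_{j}}=\zeta^{-1}$; then $g:=g_{0}\eta$ satisfies $(g^{m_{i}},g^{-m_{j}})=(a,b)$. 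This yields $Z_{ij}/B_{ij}\cong\mu_{d}\cong\mathbb{Z}_{d}$, and assembling the three types of factors gives the claimed decomposition of $\H^{3}(G,\k^{\ast})$.
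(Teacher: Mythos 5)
Your proof is correct, and it follows the same overall decomposition as the paper: Lemmas~\ref{l1.4} and~\ref{l1.5} show that the cocycle and coboundary conditions decouple over the three classes of generators, so $\H^3(G,\k^*)$ is a product of a diagonal factor, a mixed-pair factor, and a triple factor, and your treatment of the diagonal and triple factors is exactly what the paper does. Where you differ is in the mixed-pair factor, and the difference is worth noting. The paper works by normalizing representatives: it sets $g_{i,j}=f_{i,j,j}^{-1/m_j}$ to force $f_{i,j,j}=1$, deduces $f_{i,i,j}^{m_j}=1$, and then asserts that the remaining coboundary freedom ``may even'' be used to assume $f_{i,i,j}^{m_i}=1$. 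That last normalization is actually not achievable in general (take $m_i=4$, $m_j=8$: the coboundaries sweep out $\mu_2$ inside $\mu_8$ and $\mu_4\cdot\mu_2=\mu_4\neq\mu_8$), although the identification of the quotient as $\mathbb{Z}_{(m_i,m_j)}$ is nonetheless correct because a quotient of a cyclic group of order $m_j$ by its unique subgroup of order $m_j/(m_i,m_j)$ has order $(m_i,m_j)$. Your approach sidesteps this entirely: you write down the explicit homomorphism $\chi_{ij}(a,b)=a^{M_j}b^{M_i}$ on $Z_{ij}$, check it kills $B_{ij}$, verify surjectivity onto $\mu_{(m_i,m_j)}$, and show the kernel is exactly $B_{ij}$ using divisibility of $\k^*$. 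Your computations are all correct (in particular $\zeta^{M_i}=a^{M_j}b^{M_i}=1$ because $m_jM_i=m_iM_j$, and the $m_j$-th power map does carry $\mu_{m_i}$ onto $\mu_{M_i}$ since $(M_i,M_j)=1$). The payoff is an argument that is more robust and doesn't rely on a normalization that can fail; the paper's approach, when it works as intended, has the advantage of producing the explicit list of representatives that it uses immediately afterward in Corollary~\ref{co3.5}.
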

\begin{proof} By Lemmas \ref{l1.4} and \ref{l1.5}, for a $3$-cocycle $f$ one can assume that $f_{l,l,l}$ is an $m_{l}$-th root of unity and $f_{i,j,k}$ is an $(m_{i},m_{j},m_{k})$-th root of unity for all $1\leq l\leq n$ and $1\leq i<j<k\leq n$. By Lemma \ref{l1.5}, one can take $g_{i,j}=f_{i,j,j}^{-\frac{1}{m_{j}}}$ and thus one can assume that $f_{i,j,j}=1$
and $g_{i,j}^{m_{j}}=1$ for all $1\leq i<j \leq n$. By $f_{i,j,j}^{m_{i}}f_{i,i,j}^{m_{j}}=1$, one has $f_{i,i,j}^{m_{j}}=1$. Therefore, $\H^{3}(G,\k^{\ast})$ must be a quotient group of $\prod_{i=1}^{n}\mathbb{Z}_{m_{i}}\times \prod_{1\leq i<j\leq n}^{n}\mathbb{Z}_{m_{j}}\times \prod_{1\leq i<j<k\leq n}^{n}\mathbb{Z}_{(m_{i},m_{j},m_{k})}.$ Using the second relation in \eqref{eq3.3}, one may even assume that $f_{i,i,j}^{m_{i}}=1$. So the proposition is proved.
\end{proof}

For any natural number $m,$ once and for all we fix $\zeta_{m}$ to be a primitive $m$-th root of unity.

\begin{corollary}\label{co3.5}
The following \[ \left\{ f\in \Hom_{\mathbb{Z}G}(K_{3},\k^{\ast})
\left|
  \begin{array}{ll}
    f_{l,l,l}=\zeta_{m_{l}}^{a_{l}},f_{i,i,j}=\zeta_{m_{j}}^{a_{ij}}, \ f_{i,j,j}=1, \
    f_{r,s,t}=\zeta_{(m_{r},m_{s},m_{t})}^{a_{rst}}\\
   \emph{for} \ 1\leq l\leq n, \ 1\leq i<j\leq n, \ 1\leq r<s<t\leq n, \ \emph{and} \\
    0 \leq a_{l}<m_{l}, \ 0\leq a_{ij}<(m_{i},m_{j}), \ 0\leq a_{rst}<(m_{r},m_{s},m_{t})
  \end{array}
\right. \right\} \]
makes a complete set of representatives of $3$-cocycles of the complex $(K_{\bullet}^*,d_{\bullet}^*).$
\end{corollary}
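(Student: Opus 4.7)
The plan is to show that the displayed list is a complete set of representatives for $\H^{3}(G,\k^{\ast})$, split into three claims: (i) each listed cochain $f$ is a $3$-cocycle; (ii) distinct members of the list lie in distinct cohomology classes; (iii) the list exhausts all cohomology classes.

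For (i), substitute the explicit values into the cocycle equations \eqref{eq;1.4}. The identity $f_{l,l,l}^{m_{l}}=1$ is immediate from $\zeta_{m_{l}}^{a_{l}m_{l}}=1$; the mixed identity $f_{r,s,s}^{m_{r}}f_{r,r,s}^{m_{s}}=1$ reduces to $1\cdot (\zeta_{m_{s}}^{a_{rs}})^{m_{s}}=1$ since $f_{r,s,s}=1$ by construction; and $f_{r,s,t}^{m_{r}}=f_{r,s,t}^{m_{s}}=f_{r,s,t}^{m_{t}}=1$ because $(m_{r},m_{s},m_{t})$ divides each of $m_{r},m_{s},m_{t}$.

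For (ii), suppose two listed cochains $f$ and $f'$ satisfy $f\cdot (f')^{-1}=d^{\ast}(g)$ for some $2$-cochain $g$ with values $g_{i,j}:=g(\Psi_{i,j})$. Applying Lemma~\ref{l1.5} to $f\cdot (f')^{-1}$ yields
\begin{equation*}
\frac{f_{l,l,l}}{f'_{l,l,l}}=1,\qquad \frac{f_{r,s,t}}{f'_{r,s,t}}=1,\qquad \frac{f_{i,j,j}}{f'_{i,j,j}}=g_{i,j}^{-m_{j}},\qquad \frac{f_{i,i,j}}{f'_{i,i,j}}=g_{i,j}^{m_{i}}.
\end{equation*}
The first two give $a_{l}=a'_{l}$ and $a_{rst}=a'_{rst}$. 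Since $f_{i,j,j}=f'_{i,j,j}=1$ by construction, the third equation forces $g_{i,j}^{m_{j}}=1$, so $g_{i,j}^{m_{i}}$ lies in the subgroup of $m_{j}$-th roots of unity generated by $\zeta_{m_{j}}^{m_{i}}$, which is precisely $\{\zeta_{m_{j}}^{k}\mid (m_{i},m_{j})\mid k\}$. Hence $\zeta_{m_{j}}^{a_{ij}-a'_{ij}}=g_{i,j}^{m_{i}}$ forces $(m_{i},m_{j})\mid (a_{ij}-a'_{ij})$, and the range constraint $0\leq a_{ij},a'_{ij}<(m_{i},m_{j})$ then yields $a_{ij}=a'_{ij}$.

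For (iii), I would use a cardinality argument: the size of the listed set is
\begin{equation*}
\prod_{l=1}^{n}m_{l}\cdot \prod_{1\leq i<j\leq n}(m_{i},m_{j})\cdot \prod_{1\leq r<s<t\leq n}(m_{r},m_{s},m_{t}),
\end{equation*}
which by the preceding proposition equals $|\H^{3}(G,\k^{\ast})|$. Combined with the injectivity from (ii), the map from the listed set to $\H^{3}(G,\k^{\ast})$ sending each cochain to its cohomology class must be a bijection, establishing (iii). The only slightly delicate point is identifying the image of $\eta\mapsto \eta^{m_{i}}$ on the $m_{j}$-th roots of unity in step (ii), but this is a standard fact about finite cyclic groups already implicit in the proof of the preceding proposition, so I anticipate no real obstacle.
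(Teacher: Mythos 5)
Your proof is correct. The paper gives no explicit proof of Corollary~\ref{co3.5}, treating it as immediate from the preceding Proposition, whose proof reduces an arbitrary cocycle, by multiplying with coboundaries, to a normal form with $f_{i,j,j}=1$ and $f_{i,i,j}^{(m_i,m_j)}=1$ --- in effect a surjectivity argument (every cohomology class hits the list) together with an implicit orbit count. You instead split the argument into (i)~membership, (ii)~injectivity, and (iii)~a cardinality comparison against the order of $\H^3(G,\k^*)$ taken from the Proposition. The technical heart is the same in both routes: the observation that the residual coboundary action on $f_{i,i,j}$, once $g_{i,j}^{m_j}=1$ is forced by $f_{i,j,j}=1$, multiplies it by an arbitrary element of $\{\zeta_{m_j}^k : (m_i,m_j)\mid k\}$, so distinct exponents in $[0,(m_i,m_j))$ give distinct classes. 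What your route buys is a fully explicit injectivity check, a point the paper leaves terse; the trade-off is that you lean on the already-computed order of $\H^3(G,\k^*)$ for exhaustiveness rather than re-deriving it by direct reduction. Both are valid, and your version is arguably the cleaner standalone proof of the Corollary as stated.
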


Next, we want to construct a chain map. We need some more notations to present the chain map. For any positive integers $s$ and $t,$ let $[\frac{s}{t}]$ denote the integer part of $\frac{s}{t}$ and let $s_t'$ denote the remainder of division of $s$ by $t.$ When there is no risk of confusion, we drop the subscript and write simply $s'.$ The following
observation is useful in later arguments.

\begin{lemma}\label{l3.6} For any three natural numbers $s,t,r$, one has
\begin{equation}\label{eq3.4}[\frac{s+t_r'}{r}]=[\frac{s+t}{r}]-[\frac{t}{r}].
\end{equation}
\end{lemma}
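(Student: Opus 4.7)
The plan is to reduce the identity to the single elementary fact that the floor function is invariant under shifts by integers, i.e.\ $[x-n]=[x]-n$ for any $n\in\mathbb{Z}$. To set up this reduction, I would first apply the Euclidean division of $t$ by $r$, writing
\[
 t \;=\; r\,[\tfrac{t}{r}] \;+\; t_r',\qquad 0\le t_r'<r.
\]
This is exactly the defining relation of the notations $[\,\cdot\,]$ and $(\,\cdot\,)_r'$ introduced just before the lemma, so no further justification is needed.

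Substituting this decomposition into $s+t_r'$ gives $s+t_r' \,=\, (s+t) - r\,[t/r]$, and dividing by $r$ yields
\[
 \frac{s+t_r'}{r} \;=\; \frac{s+t}{r} \,-\, [\tfrac{t}{r}].
\]
Taking integer parts of both sides and invoking $[x-n]=[x]-n$ for the integer $n=[t/r]$ delivers the claimed equality $[\frac{s+t_r'}{r}] = [\frac{s+t}{r}] - [\frac{t}{r}]$. There is essentially no obstacle: the whole content of the lemma is the translation invariance of $[\,\cdot\,]$ under integer shifts, and the only thing to be careful about is that $[t/r]$ is indeed an integer (which is immediate, since $s,t,r$ are natural numbers) so that the shift identity applies. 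The statement is recorded here only because it will be used repeatedly in the subsequent cocycle manipulations.
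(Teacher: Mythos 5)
Your proof is correct and follows exactly the same route as the paper's one-line argument: substitute $t_r' = t - r[\tfrac{t}{r}]$ from Euclidean division and then apply the translation invariance $[x-n]=[x]-n$ of the floor function for integer $n$. You have merely spelled out the intermediate steps that the paper compresses.
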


\begin{proof}
$ [\frac{s+t_r'}{r}]=[\frac{s+t-[\frac{t}{r}]r}{r}]=[\frac{s+t}{r}]-[\frac{t}{r}]. $
\end{proof}

Now we are ready to give a chain map, up to the third term for our purpose, from the normalized bar resolution $(B_{\bullet},\partial_{\bullet})$ to the tensor resolution $(K_{\bullet},d_{\bullet}).$ Recall that $B_{m}$ is the free $\mathbb{Z}G$-module on the set of all symbols
$[h_{1},\ldots,h_{m}]$ with $h_{i}\in G$ and $m\geq 1$. In case $m=0$, the symbol $[\; ]$ denote $1\in \mathbb{Z}G$ and the map $\partial_{0}=\epsilon:\;B_{0}\to \mathbb{Z}$ sends
$[\; ]$ to $1$.

We define the following three  morphisms of $\mathbb{Z}G$-modules:
\begin{eqnarray*}
F_{1}: &&B_{1}\To K_{1}\\
&&[g_{1}^{i_{1}}\cdots g_{n}^{i_{n}}]\mapsto
\sum_{s=1}^{n}\sum_{\alpha_{s}=0}^{i_{s}-1}g_{1}^{i_{1}}\cdots g_{s-1}^{i_{s-1}}g_{s}^{\alpha_{s}}\Psi_{s};\\
F_{2}: &&B_{2}\To K_{2}\\
&&[g_{1}^{i_{1}}\cdots g_{n}^{i_{n}},g_{1}^{j_{1}}\cdots g_{n}^{j_{n}}]\mapsto
\sum_{s=1}^{n}g_{1}^{i_{1}+j_{1}}\cdots g_{s-1}^{i_{s-1}+j_{s-1}}[\frac{i_{s}+j_{s}}{m_{s}}]\Psi_{s,s}\\
&&-\sum_{1\leq s<t\leq n}\sum_{\alpha_{s}=0}^{j_{s}-1}\sum_{\beta_{t}=0}^{i_{t}-1}g_{1}^{i_{1}}\cdots g_{t-1}^{i_{t-1}}g_{1}^{j_{1}}\cdots g_{s-1}^{j_{s-1}}g_{s}^{\alpha_{s}}g_{t}^{\beta_{t}}\Psi_{s,t};\\
F_{3}: &&B_{3}\To K_{3}\\
&&[g_{1}^{i_{1}}\cdots g_{n}^{i_{n}},g_{1}^{j_{1}}\cdots g_{n}^{j_{n}},g_{1}^{k_{1}}\cdots g_{n}^{k_{n}}]\mapsto\\
&& \sum_{r=1}^{n}[\frac{j_{r}+k_{r}}{m_{r}}]g_{1}^{j_{1}+k_{1}}\cdots g_{r-1}^{j_{r-1}+k_{r-1}}\sum_{\beta_{r}=0}^{i_{r}-1}g_{1}^{i_{1}}\cdots g_{r-1}^{i_{r-1}}g_{r}^{\beta_{r}}\Psi_{r,r,r}+\\
&&\sum_{1\leq r<t\leq n}[\frac{j_{r}+k_{r}}{m_{r}}]g_{1}^{j_{1}+k_{1}}\cdots g_{r-1}^{j_{r-1}+k_{r-1}}
\sum_{\beta_{t}=0}^{i_{t}-1}g_{1}^{i_{1}}\cdots g_{t-1}^{i_{t-1}}g_{t}^{\beta_{t}}\Psi_{r,r,t}+\\
&&\sum_{1\leq r<t\leq n}[\frac{i_{t}+j_{t}}{m_{t}}]g_{1}^{i_{1}+j_{1}}\cdots g_{t-1}^{i_{t-1}+j_{t-1}}
\sum_{\gamma_{r}=0}^{k_{r}-1}g_{1}^{k_{1}}\cdots g_{r-1}^{k_{r-1}}g_{r}^{\gamma_{r}}\Psi_{r,t,t}-\\
&&\sum_{1\leq r<s<t\leq n}\sum_{\beta_{t}=0}^{i_{t}-1}g_{1}^{i_{1}}\cdots g_{t-1}^{i_{t-1}}g_{t}^{\beta_{t}}
\sum_{\alpha_{s}=0}^{j_{s}-1}g_{1}^{j_{1}}\cdots g_{s-1}^{j_{s-1}}g_{s}^{\alpha_{s}}
\sum_{\gamma_{r}=0}^{k_{r}-1}g_{1}^{k_{1}}\cdots g_{r-1}^{k_{r-1}}g_{r}^{\gamma_{r}}\Psi_{r,s,t}
\end{eqnarray*}
for $0\leq i_{l},j_{l},k_{l}< m_{l}$ and $1\leq l\leq n$.

\begin{proposition}\label{p2.7} The following diagram is commutative

\begin{figure}[hbt]
\begin{picture}(150,50)(50,-40)
\put(0,0){\makebox(0,0){$ \cdots$}}\put(10,0){\vector(1,0){20}}\put(40,0){\makebox(0,0){$B_{3}$}}
\put(50,0){\vector(1,0){20}}\put(80,0){\makebox(0,0){$B_{2}$}}
\put(90,0){\vector(1,0){20}}\put(120,0){\makebox(0,0){$B_{1}$}}
\put(130,0){\vector(1,0){20}}\put(160,0){\makebox(0,0){$B_{0}$}}
\put(170,0){\vector(1,0){20}}\put(200,0){\makebox(0,0){$\mathbb{Z}$}}
\put(210,0){\vector(1,0){20}}\put(240,0){\makebox(0,0){$0$}}

\put(0,-40){\makebox(0,0){$ \cdots$}}\put(10,-40){\vector(1,0){20}}\put(40,-40){\makebox(0,0){$K_{3}$}}
\put(50,-40){\vector(1,0){20}}\put(80,-40){\makebox(0,0){$K_{2}$}}
\put(90,-40){\vector(1,0){20}}\put(120,-40){\makebox(0,0){$K_{1}$}}
\put(130,-40){\vector(1,0){20}}\put(160,-40){\makebox(0,0){$K_{0}$}}
\put(170,-40){\vector(1,0){20}}\put(200,-40){\makebox(0,0){$\mathbb{Z}$}}
\put(210,-40){\vector(1,0){20}}\put(240,-40){\makebox(0,0){$0$}}

\put(40,-10){\vector(0,-1){20}}
\put(80,-10){\vector(0,-1){20}}
\put(120,-10){\vector(0,-1){20}}
\put(158,-10){\line(0,-1){20}}\put(160,-10){\line(0,-1){20}}
\put(198,-10){\line(0,-1){20}}\put(200,-10){\line(0,-1){20}}

\put(60,5){\makebox(0,0){$\partial_{3}$}}
\put(100,5){\makebox(0,0){$\partial_{2}$}}
\put(140,5){\makebox(0,0){$\partial_{1}$}}

\put(60,-35){\makebox(0,0){$d$}}
\put(100,-35){\makebox(0,0){$d$}}
\put(140,-35){\makebox(0,0){$d$}}

\put(50,-20){\makebox(0,0){$F_{3}$}}
\put(90,-20){\makebox(0,0){$F_{2}$}}
\put(130,-20){\makebox(0,0){$F_{1}$}}

\end{picture}
\end{figure}
\end{proposition}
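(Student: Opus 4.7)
The plan is to verify the three commutative squares one at a time, from right to left, by direct computation. In each case the verification reduces to matching, for every free generator of the bar complex and every type of basis symbol $\Psi_{\ldots}$ in the target, the coefficient produced by $d\circ F_i$ against the one produced by $F_{i-1}\circ\partial_i$. The two combinatorial identities that will do essentially all the work are the telescoping sum
\[ \sum_{\alpha=0}^{i-1}g^{\alpha}(g-1)=g^{i}-1, \]
which converts the $T_r$-differentials into differences of group elements, and Lemma~\ref{l3.6}, which will be invoked whenever a sum of exponents $i_r+j_r$ (or $j_r+k_r$) has been implicitly reduced modulo $m_r$ by the group multiplication and must be re-expanded.

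The rightmost square with $F_1$ is straightforward: applying $d$ to $F_1[g_1^{i_1}\cdots g_n^{i_n}]$ and telescoping the inner sum in each $g_s$ collapses successively to $g_1^{i_1}\cdots g_n^{i_n}-1$, which is exactly $\partial_1[g_1^{i_1}\cdots g_n^{i_n}]$ under the identification $B_0=K_0=\mathbb{Z}G$. The middle square with $F_2$ is the first real test of the method: one computes $F_1(a[b])-F_1[ab]+F_1[a]$ and $d(F_2[a,b])$ separately and compares them on the diagonal generators $\Psi_{s,s}$ and the off-diagonal generators $\Psi_{s,t}$. The $\Psi_{s,s}$ coefficient is where the integer part $[\tfrac{i_s+j_s}{m_s}]$ in the definition of $F_2$ is produced: since $F_1[ab]$ uses the reduced exponent $(i_s+j_s)'_{m_s}$, the discrepancy $(i_s+j_s)-(i_s+j_s)'_{m_s}=m_s[\tfrac{i_s+j_s}{m_s}]$ accounts, via the norm $N_s$, for the missing $\Psi_{s,s}$-contribution. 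The cross terms on $\Psi_{s,t}$ come from the interaction between $F_1(a[b])$ and $F_1[ab]$ and are exactly what $d$ produces from the $\Psi_{s,t}$-summand of $F_2$.

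The leftmost square with $F_3$ is where essentially all the substance lies. Writing $\partial_3[a,b,c]=a[b,c]-[ab,c]+[a,bc]-[a,b]$, applying $F_2$ termwise, and grouping contributions, one has to match the $\Psi_{r,r,r}$, $\Psi_{r,r,s}$, $\Psi_{r,s,s}$ and $\Psi_{r,s,t}$ coefficients against those of $d(F_3[a,b,c])$. The $\Psi_{r,r,r}$ case reduces to the cyclic ($k=1$) case and is classical. The main obstacle will be the mixed generators $\Psi_{r,r,s}$ and $\Psi_{r,s,s}$: here the $N_r$ or $N_s$ piece of the tensor differential, together with the factors $[\tfrac{j_r+k_r}{m_r}]$ and $[\tfrac{i_s+j_s}{m_s}]$ built into $F_3$, must cancel precisely against the ``boundary correction'' produced when $F_2$ is evaluated on $[ab,c]$ and $[a,bc]$ and the reduced exponents are re-expanded by Lemma~\ref{l3.6}. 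For the fully off-diagonal generators $\Psi_{r,s,t}$, the sign alternation of $\partial_3$ combined with the deliberate ordering of the triple sums in $F_3$ causes all unordered contributions to cancel in pairs, leaving only the single ordered triple $r<s<t$ that matches $d\circ F_3$. Modulo this bookkeeping no further structural input is needed; the entire argument is purely computational once one is careful about the ranges of summation and the use of Lemma~\ref{l3.6}.
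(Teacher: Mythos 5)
Your outline correctly identifies the two ingredients (the telescoping identity for $T_r$ and Lemma~\ref{l3.6}) and correctly handles the rightmost square, but there is a genuine confusion about where the coefficient comparison takes place, and it breaks the plan for the leftmost square. For $dF_2=F_1\partial_2$, both sides are elements of $K_1$, whose free basis is $\{\Psi_s\}$; the comparison is of $\Psi_s$-coefficients, not of $\Psi_{s,s}$ or $\Psi_{s,t}$ as you write (those are $K_2$-generators and only appear as intermediate summands of $F_2[a,b]$ that $d$ collapses down to $K_1$). Likewise, for $dF_3=F_2\partial_3$, both sides live in $K_2$, with free basis $\{\Psi_{s,s}\}\cup\{\Psi_{s,t}\}_{s<t}$; your proposal to ``match the $\Psi_{r,r,r}$, $\Psi_{r,r,s}$, $\Psi_{r,s,s}$ and $\Psi_{r,s,t}$ coefficients'' is comparing in the wrong module, since neither side has any $K_3$-coefficients.

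This is not merely notational: it invalidates the claim that ``the $\Psi_{r,r,r}$ case reduces to the cyclic case.'' When one computes $dF_3[a,b,c]$, the coefficient of a fixed $\Psi_{s,s}$ in the target $K_2$ receives contributions simultaneously from the $\Psi_{s,s,s}$ summand (via $N_s$), from the $\Psi_{r,s,s}$ summands with $r<s$ (via $T_r$), and from the $\Psi_{s,s,t}$ summands with $t>s$ (via $T_t$), since $d$ sends each of those $K_3$-generators partly onto $\Psi_{s,s}$. There is no way to isolate an ``$\Psi_{r,r,r}$ piece'' of the identity and dispatch it by a rank-one argument. The paper's verification proceeds by computing both $F_2\partial_3[a,b,c]$ and $dF_3[a,b,c]$, extracting the $\Psi_{s,s}$- and $\Psi_{s,t}$-coefficients of each, and matching them term by term, with Lemma~\ref{l3.6} used to re-expand the truncated exponents $(i_s+j_s)'$ and $(j_s+k_s)'$. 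Your plan needs to be revised so that the bookkeeping is done in $K_2$ (and, for the middle square, in $K_1$); once that is fixed, the rest of your strategy is sound and coincides with the paper's.
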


\begin{proof} The proof is by direct but very complicated computation. The essence of the proposition lies in figuring out the morphisms $F_{1},F_{2}$ and $F_{3}$ in the first place. We hope that the proof may shed some light on the construction of them. The proof is naturally divided into three parts.

\textbf{Claim 1: $dF_{1}=\partial_{1}$. } Take any generator $[g_{1}^{i_{1}}\cdots g_{n}^{i_{n}}]\in B_{1}$, then
$\partial_{1}([g_{1}^{i_{1}}\cdots g_{n}^{i_{n}}])=(g_{1}^{i_{1}}\cdots g_{n}^{i_{n}}-1)\Psi(0,\ldots,0)$.  And,
 \begin{eqnarray*}
 dF_{1}([g_{1}^{i_{1}}\cdots g_{n}^{i_{n}}])&=&d(\sum_{s=1}^{n}\sum_{\alpha_{s}=0}^{i_{s}-1}g_{1}^{i_{1}}\cdots g_{s-1}^{i_{s-1}}g_{s}^{\alpha_{s}}\Psi_{s})\\
&=&\sum_{s=1}^{n}\sum_{\alpha_{s}=0}^{i_{s}-1}g_{1}^{i_{1}}\cdots g_{s-1}^{i_{s-1}}g_{s}^{\alpha_{s}}(g_{s}-1)\Psi(0,\ldots,0)\\
&=&\sum_{s=1}^{n}g_{1}^{i_{1}}\cdots g_{s-1}^{i_{s-1}}(g_{s}^{i_{s}}-1)\Psi(0,\ldots,0)\\
&=&(g_{1}^{i_{1}}\cdots g_{n}^{i_{n}}-1)\Psi(0,\ldots,0).
\end{eqnarray*}

\textbf{Claim 2: $dF_{2}=F_{1}\partial_{2}$. } For any generator $[g_{1}^{i_{1}}\cdots g_{n}^{i_{n}},g_{1}^{j_{1}}\cdots g_{n}^{j_{n}}]$, we have
  \begin{eqnarray*}&&F_{1}\partial_{2}([g_{1}^{i_{1}}\cdots g_{n}^{i_{n}},g_{1}^{j_{1}}\cdots g_{n}^{j_{n}}])\\
  &=&F_{1}(g_{1}^{i_{1}}\cdots g_{n}^{i_{n}}[g_{1}^{j_{1}}\cdots g_{n}^{j_{n}}]-[g_{1}^{i_{1}+j_{1}}\cdots g_{n}^{i_{n}+j_{n}}]+[g_{1}^{i_{1}}\cdots g_{n}^{i_{n}}])\\
  &=&g_{1}^{i_{1}}\cdots g_{n}^{i_{n}}\sum_{s=1}^{n}\sum_{\alpha_{s}=0}^{j_{s}-1}g_{1}^{j_{1}}\cdots g_{s-1}^{j_{s-1}}g_{s}^{\alpha_{s}}\Psi_{s}\\
  &&-\sum_{s=1}^{n}\sum_{\alpha_{s}=0}^{(i_{s}+j_{s})'-1}g_{1}^{i_{1}+j_{1}}\cdots g_{s-1}^{i_{s-1}+j_{s-1}}g_{s}^{\alpha_{s}}\Psi_{s}\\
  &&+\sum_{s=1}^{n}\sum_{\alpha_{s}=0}^{i_{s}-1}g_{1}^{i_{1}}\cdots g_{s-1}^{i_{s-1}}g_{s}^{\alpha_{s}}\Psi_{s}.
   \end{eqnarray*}
   Fix any $s$, the coefficient of $\Psi_{s}$ is
   \begin{eqnarray}\label{eq3.5}&&g_{1}^{i_{1}}\cdots g_{n}^{i_{n}}\sum_{\alpha_{s}=0}^{j_{s}-1}g_{1}^{j_{1}}\cdots g_{s-1}^{j_{s-1}}g_{s}^{\alpha_{s}} \notag \\
   &&-g_{1}^{i_{1}+j_{1}}\cdots g_{s-1}^{i_{s-1}+j_{s-1}}(\sum_{\alpha_{s}=0}^{i_{s}+j_{s}-1}g_{s}^{\alpha_{s}}-[\frac{i_{s}+j_{s}}{m_{s}}]N_{s})\\\notag
   &&+\sum_{\alpha_{s}=0}^{i_{s}-1}g_{1}^{i_{1}}\cdots g_{s-1}^{i_{s-1}}g_{s}^{\alpha_{s}}.\end{eqnarray}

Now consider $dF_{2}.$ We have
   \begin{eqnarray*} &&dF_{2}([g_{1}^{i_{1}}\cdots g_{n}^{i_{n}},g_{1}^{j_{1}}\cdots g_{n}^{j_{n}}]) \\ &=&d(\sum_{s=1}^{n}g_{1}^{i_{1}+j_{1}} \cdots g_{s-1}^{i_{s-1}+j_{s-1}}[\frac{i_{s}+j_{s}}{m_{s}}]\Psi_{s,s}) \\ &&-d(\sum_{1\leq s<t\leq n}\sum_{\alpha_{s}=0}^{j_{s}-1}\sum_{\beta_{t}=0}^{i_{t}-1}g_{1}^{i_{1}}\cdots g_{t-1}^{i_{t-1}}g_{1}^{j_{1}}\cdots g_{s-1}^{j_{s-1}}g_{s}^{\alpha_{s}}g_{t}^{\beta_{t}}\Psi_{s,t}).
   \end{eqnarray*}
In this expression, the coefficient of $\Psi_{s}$ is
   \begin{eqnarray*}&&g_{1}^{i_{1}+j_{1}}\cdots g_{s-1}^{i_{s-1}+j_{s-1}}[\frac{i_{s}+j_{s}}{m_{s}}]N_{s}\\
   &&-\sum_{1\leq t<s}\sum_{\beta_{s}=0}^{i_{s}-1}g_{1}^{i_{1}}\cdots g_{s-1}^{i_{s-1}}g_{1}^{j_{1}}\cdots g_{t-1}^{j_{t-1}}(g_{t}^{j_{t}}-1)g_{s}^{\beta_{s}}\\
   &&+\sum_{s<t\leq n}\sum_{\alpha_{s}=0}^{j_{s}-1}g_{1}^{i_{1}}\cdots g_{t-1}^{i_{t-1}}g_{1}^{j_{1}}\cdots g_{s-1}^{j_{s-1}}g_{s}^{\alpha_{s}}(g_{t}^{i_{t}}-1)\\
   &=&g_{1}^{i_{1}+j_{1}}\cdots g_{s-1}^{i_{s-1}+j_{s-1}}[\frac{i_{s}+j_{s}}{m_{s}}]N_{s}\\&&
   -\sum_{\beta_{s}=0}^{i_{s}-1}g_{1}^{i_{1}}\cdots g_{s-1}^{i_{s-1}}(g_{1}^{j_{1}}\cdots g_{s-1}^{j_{s-1}}-1)g_{s}^{\beta_{s}}\\
   &&+\sum_{\alpha_{s}=0}^{j_{s}-1}(g_{1}^{i_{1}}\cdots g_{n}^{i_{n}}-g_{1}^{i_{1}}\cdots g_{s}^{i_{s}})g_{1}^{j_{1}}\cdots g_{s-1}^{j_{s-1}}g_{s}^{\alpha_{s}}\\
   &=& g_{1}^{i_{1}+j_{1}}\cdots g_{s-1}^{i_{s-1}+j_{s-1}}[\frac{i_{s}+j_{s}}{m_{s}}]N_{s}\\&&
   -\sum_{\beta_{s}=0}^{i_{s}+j_{s}-1}g_{1}^{i_{1}+j_{1}}\cdots g_{s-1}^{i_{s-1}+j_{s-1}}g_{s}^{\beta_{s}}\\
   &&+\sum_{\alpha_{s}=0}^{j_{s}-1}g_{1}^{i_{1}}\cdots g_{n}^{i_{n}}g_{1}^{j_{1}}\cdots g_{s-1}^{j_{s-1}}g_{s}^{\alpha_{s}}+\sum_{\beta_{s}=0}^{i_{s}-1}g_{1}^{i_{1}}\cdots g_{s-1}^{i_{s-1}}g_{s}^{\beta_{s}},\end{eqnarray*}
   which is clearly identical with \eqref{eq3.5}. So we have $dF_{2}=F_{1}\partial_{2}.$

   \textbf{Claim 3: $dF_{3}=F_{2}\partial_{3}$. } Similarly, for any generator $[g_{1}^{i_{1}}\cdots g_{n}^{i_{n}},g_{1}^{j_{1}}\cdots g_{n}^{j_{n}}, g_{1}^{k_{1}}\cdots g_{n}^{k_{n}}]$, we have
   \begin{eqnarray*}&&F_{2}\partial_{3}([g_{1}^{i_{1}}\cdots g_{n}^{i_{n}},g_{1}^{j_{1}}\cdots g_{n}^{j_{n}},g_{1}^{k_{1}}\cdots g_{n}^{k_{n}}])\\
   &=&F_{2}(g_{1}^{i_{1}}\cdots g_{n}^{i_{n}}[g_{1}^{j_{1}}\cdots g_{n}^{j_{n}},g_{1}^{k_{1}}\cdots g_{n}^{k_{n}}]-[g_{1}^{i_{1}+j_{1}}\cdots g_{n}^{i_{n}+j_{n}},g_{1}^{k_{1}}\cdots g_{n}^{k_{n}}])\\
   &&+F_{2}([g_{1}^{i_{1}}\cdots g_{n}^{i_{n}},g_{1}^{j_{1}+k_{1}}\cdots g_{n}^{j_{n}+k_{n}}]-[g_{1}^{i_{1}}\cdots g_{n}^{i_{n}},g_{1}^{j_{1}}\cdots g_{n}^{j_{n}}])\\
   &=&g_{1}^{i_{1}}\cdots g_{n}^{i_{n}}\sum_{s=1}^{n}g_{1}^{j_{1}+k_{1}}\cdots g_{s-1}^{j_{s-1}+k_{s-1}}[\frac{j_{s}+k_{s}}{m_{s}}]\Psi_{s,s}\\
   &&-g_{1}^{i_{1}}\cdots g_{n}^{i_{n}}\sum_{1\leq s<t\leq n}\sum_{\alpha_{s}=0}^{k_{s}-1}\sum_{\beta_{t}=0}^{j_{t}-1}g_{1}^{j_{1}}\cdots g_{t-1}^{j_{t-1}}g_{1}^{k_{1}}\cdots g_{s-1}^{k_{s-1}}g_{s}^{\alpha_{s}}g_{t}^{\beta_{t}}\Psi_{s,t}\\
   &&-\sum_{s=1}^{n}g_{1}^{i_{1}+j_{1}+k_{1}}\cdots g_{s-1}^{i_{s-1}+j_{s-1}+k_{s-1}}[\frac{(i_{s}+j_{s})'+k_{s}}{m_{s}}]\Psi_{s,s}\\
   &&+\sum_{1\leq s<t\leq n}\sum_{\alpha_{s}=0}^{k_{s}-1}\sum_{\beta_{t}=0}^{(i_{t}+j_{t})'-1}g_{1}^{i_{1}+j_{1}}\cdots g_{t-1}^{i_{t-1}+j_{t-1}}g_{1}^{k_{1}}\cdots g_{s-1}^{k_{s-1}}g_{s}^{\alpha_{s}}g_{t}^{\beta_{t}}\Psi_{s,t}\\
   &&+\sum_{s=1}^{n}g_{1}^{i_{1}+j_{1}+k_{1}}\cdots g_{s-1}^{i_{s-1}+j_{s-1}+k_{s-1}}[\frac{i_{s}+(j_{s}+k_{s})'}{m_{s}}]\Psi_{s,s}\\
   &&-\sum_{1\leq s<t\leq n}\sum_{\alpha_{s}=0}^{(j_{s}+k_{s})'-1}\sum_{\beta_{t}=0}^{i_{t}-1}g_{1}^{i_{1}}\cdots g_{t-1}^{i_{t-1}}g_{1}^{j_{1}+k_{1}}\cdots g_{s-1}^{j_{s-1}+k_{s-1}}g_{s}^{\alpha_{s}}g_{t}^{\beta_{t}}\Psi_{s,t}\\
   &&-\sum_{s=1}^{n}g_{1}^{i_{1}+j_{1}}\cdots g_{s-1}^{i_{s-1}+j_{s-1}}[\frac{i_{s}+j_{s}}{m_{s}}]\Psi_{s,s}\\
   &&+\sum_{1\leq s<t\leq n}\sum_{\alpha_{s}=0}^{j_{s}-1}\sum_{\beta_{t}=0}^{i_{t}-1}g_{1}^{i_{1}}\cdots g_{t-1}^{i_{t-1}}g_{1}^{j_{1}}\cdots g_{s-1}^{j_{s-1}}g_{s}^{\alpha_{s}}g_{t}^{\beta_{t}}\Psi_{s,t}.
   \end{eqnarray*}
Note that in $(i_s+j_s)'$ we drop the subscript $m_s.$ In the previous expression, for any $1\leq s\leq n$, the coefficient of $\Psi_{s,s}$ is
    \begin{eqnarray}\label{eq3.6}&&g_{1}^{i_{1}}\cdots g_{n}^{i_{n}}g_{1}^{j_{1}+k_{1}}\cdots g_{s-1}^{j_{s-1}+k_{s-1}}[\frac{j_{s}+k_{s}}{m_{s}}] \notag \\
    &&+g_{1}^{i_{1}+j_{1}+k_{1}}\cdots g_{s-1}^{i_{s-1}+j_{s-1}+k_{s-1}}([\frac{i_{s}+j_{s}}{m_{s}}]-[\frac{j_{s}+k_{s}}{m_{s}}])\\\notag
    &&-g_{1}^{i_{1}+j_{1}}\cdots g_{s-1}^{i_{s-1}+j_{s-1}}[\frac{i_{s}+j_{s}}{m_{s}}],\end{eqnarray} where Lemma \ref{l3.6} is applied.
    For any $1\leq s< t\leq n$, the coefficient of $\Psi_{s,t}$ is
     \begin{eqnarray}\label{eq3.7}&&-g_{1}^{i_{1}}\cdots g_{n}^{i_{n}}\sum_{\alpha_{s}=0}^{k_{s}-1}\sum_{\beta_{t}=0}^{j_{t}-1}g_{1}^{j_{1}}\cdots g_{t-1}^{j_{t-1}}g_{1}^{k_{1}}\cdots g_{s-1}^{k_{s-1}}g_{s}^{\alpha_{s}}g_{t}^{\beta_{t}}\notag \\
    &&+\sum_{\alpha_{s}=0}^{k_{s}-1}g_{1}^{i_{1}+j_{1}}\cdots g_{t-1}^{i_{t-1}+j_{t-1}}g_{1}^{k_{1}}\cdots g_{s-1}^{k_{s-1}}g_{s}^{\alpha_{s}}(\sum_{\beta_{t}=0}^{i_{t}+j_{t}-1}g_{t}^{\beta_{t}}
    -[\frac{i_{t}+j_{t}}{m_{t}}]N_{t})\\\notag
    &&-\sum_{\beta_{t}=0}^{i_{t}-1}g_{1}^{i_{1}}\cdots g_{t-1}^{i_{t-1}}g_{t}^{\beta_{t}}g_{1}^{j_{1}+k_{1}}\cdots g_{s-1}^{j_{s-1}+k_{s-1}}(\sum_{\alpha_{s}=0}^{j_{s}+k_{s}-1}g_{s}^{\alpha_{s}}-
    [\frac{j_{s}+k_{s}}{m_{s}}]N_{s})\\\notag
    &&+\sum_{\alpha_{s}=0}^{j_{s}-1}\sum_{\beta_{t}=0}^{i_{t}-1}g_{1}^{i_{1}}\cdots g_{t-1}^{i_{t-1}}g_{1}^{j_{1}}\cdots g_{s-1}^{j_{s-1}}g_{s}^{\alpha_{s}}g_{t}^{\beta_{t}}.\end{eqnarray}

For $dF_{3},$ we have
      \begin{eqnarray*}&&dF_{3}([g_{1}^{i_{1}}\cdots g_{n}^{i_{n}},g_{1}^{j_{1}}\cdots g_{n}^{j_{n}},g_{1}^{k_{1}}\cdots g_{n}^{k_{n}}])\\
      &=& d(\sum_{r=1}^{n}[\frac{j_{r}+k_{r}}{m_{r}}]g_{1}^{j_{1}+k_{1}}\cdots g_{r-1}^{j_{r-1}+k_{r-1}}\sum_{\beta_{r}=0}^{i_{r}-1}g_{1}^{i_{1}}\cdots g_{r-1}^{i_{r-1}}g_{r}^{\beta_{r}}\Psi_{r,r,r})+\\
      &&d(\sum_{1\leq r<t\leq n}[\frac{j_{r}+k_{r}}{m_{r}}]g_{1}^{j_{1}+k_{1}}\cdots g_{r-1}^{j_{r-1}+k_{r-1}}
      \sum_{\beta_{t}=0}^{i_{t}-1}g_{1}^{i_{1}}\cdots g_{t-1}^{i_{t-1}}g_{t}^{\beta_{t}}\Psi_{r,r,t})+\\
      &&d(\sum_{1\leq r<t\leq n}[\frac{i_{t}+j_{t}}{m_{t}}]g_{1}^{i_{1}+j_{1}}\cdots g_{t-1}^{i_{t-1}+j_{t-1}}
      \sum_{\gamma_{r}=0}^{k_{r}-1}g_{1}^{k_{1}}\cdots g_{r-1}^{k_{r-1}}g_{r}^{\gamma_{r}}\Psi_{r,t,t})-\\
      &&d(\sum_{1\leq r<s<t\leq n}\sum_{\beta_{t}=0}^{i_{t}-1}g_{1}^{i_{1}}\cdots g_{t-1}^{i_{t-1}}g_{t}^{\beta_{t}}
      \sum_{\alpha_{s}=0}^{j_{s}-1}g_{1}^{j_{1}}\cdots g_{s-1}^{j_{s-1}}g_{s}^{\alpha_{s}}
     \sum_{\gamma_{r}=0}^{k_{r}-1}g_{1}^{k_{1}}\cdots g_{r-1}^{k_{r-1}}g_{r}^{\gamma_{r}}\Psi_{r,s,t}).\\
     \end{eqnarray*}
Note that the coefficient of $\Psi_{s,s}$ is
      \begin{eqnarray*}&&[\frac{j_{s}+k_{s}}{m_{s}}]g_{1}^{j_{1}+k_{1}}\cdots g_{s-1}^{j_{s-1}+k_{s-1}}g_{1}^{i_{1}}\cdots g_{s-1}^{i_{s-1}}(g_{s}^{i_{s}}-1)\\
      &&+\sum_{ s<t\leq n}[\frac{j_{s}+k_{s}}{m_{s}}]g_{1}^{j_{1}+k_{1}}\cdots g_{r-1}^{j_{s-1}+k_{s-1}}
      g_{1}^{i_{1}}\cdots g_{t-1}^{i_{t-1}}(g_{t}^{i_{t}}-1)\\
      &&+\sum_{1\leq r<s}[\frac{i_{s}+j_{s}}{m_{s}}]g_{1}^{i_{1}+j_{1}}\cdots g_{s-1}^{i_{s-1}+j_{s-1}}
      g_{1}^{k_{1}}\cdots g_{r-1}^{k_{r-1}}(g_{r}^{k_{r}}-1)\\
      &=&[\frac{j_{s}+k_{s}}{m_{s}}]g_{1}^{j_{1}+k_{1}}\cdots g_{s-1}^{j_{s-1}+k_{s-1}}g_{1}^{i_{1}}\cdots g_{s-1}^{i_{s-1}}(g_{s}^{i_{s}}-1)\\
      &&+[\frac{j_{s}+k_{s}}{m_{s}}]g_{1}^{j_{1}+k_{1}}\cdots g_{r-1}^{j_{s-1}+k_{s-1}}
      (g_{1}^{i_{1}}\cdots g_{n}^{i_{n}}-g_{1}^{i_{1}}\cdots g_{s}^{i_{s}})\\
      &&+[\frac{i_{s}+j_{s}}{m_{s}}]g_{1}^{i_{1}+j_{1}}\cdots g_{s-1}^{i_{s-1}+j_{s-1}}
      (g_{1}^{k_{1}}\cdots g_{s-1}^{k_{s-1}}-1),
      \end{eqnarray*} which clearly is equal to \eqref{eq3.6}.

Finally we consider the coefficient of $\Psi_{s,t}$ for $1\leq s<t\leq n$, which is
    \begin{eqnarray*}
     &&[\frac{j_{s}+k_{s}}{m_{s}}]N_{s}g_{1}^{j_{1}+k_{1}}\cdots g_{s-1}^{j_{s-1}+k_{s-1}}
      \sum_{\beta_{t}=0}^{i_{t}-1}g_{1}^{i_{1}}\cdots g_{t-1}^{i_{t-1}}g_{t}^{\beta_{t}}\\
      &&+[\frac{i_{t}+j_{t}}{m_{t}}]N_{t}g_{1}^{i_{1}+j_{1}}\cdots g_{t-1}^{i_{t-1}+j_{t-1}}
      \sum_{\gamma_{s}=0}^{k_{s}-1}g_{1}^{k_{1}}\cdots g_{s-1}^{k_{s-1}}g_{s}^{\gamma_{s}}\\
      &&-\sum_{1\leq r<s<t}\sum_{\beta_{t}=0}^{i_{t}-1}g_{1}^{i_{1}}\cdots g_{t-1}^{i_{t-1}}g_{t}^{\beta_{t}}
      \sum_{\alpha_{s}=0}^{j_{s}-1}g_{1}^{j_{1}}\cdots g_{s-1}^{j_{s-1}}g_{s}^{\alpha_{s}}
     g_{1}^{k_{1}}\cdots g_{r-1}^{k_{r-1}}(g_{r}^{k_{r}}-1)\\
     &&+\sum_{ s<r<t}\sum_{\beta_{t}=0}^{i_{t}-1}g_{1}^{i_{1}}\cdots g_{t-1}^{i_{t-1}}g_{t}^{\beta_{t}}
     g_{1}^{j_{1}}\cdots g_{r-1}^{j_{r-1}}(g_{r}^{j_{r}}-1)
     \sum_{\gamma_{s}=0}^{k_{s}-1}g_{1}^{k_{1}}\cdots g_{s-1}^{k_{s-1}}g_{s}^{\gamma_{s}}\\
     &&-\sum_{s<t<r}g_{1}^{i_{1}}\cdots g_{r-1}^{i_{r-1}}(g_{r}^{i_{r}}-1)
      \sum_{\alpha_{t}=0}^{j_{t}-1}g_{1}^{j_{1}}\cdots g_{t-1}^{j_{t-1}}g_{t}^{\alpha_{t}}
      \sum_{\gamma_{s}=0}^{k_{s}-1}g_{1}^{k_{1}}\cdots g_{s-1}^{k_{s-1}}g_{s}^{\gamma_{s}}\\
      &=&[\frac{j_{s}+k_{s}}{m_{s}}]N_{s}g_{1}^{j_{1}+k_{1}}\cdots g_{s-1}^{j_{s-1}+k_{s-1}}
      \sum_{\beta_{t}=0}^{i_{t}-1}g_{1}^{i_{1}}\cdots g_{t-1}^{i_{t-1}}g_{t}^{\beta_{t}}\\
      &&+[\frac{i_{t}+j_{t}}{m_{t}}]N_{t}g_{1}^{i_{1}+j_{1}}\cdots g_{t-1}^{i_{t-1}+j_{t-1}}
      \sum_{\gamma_{s}=0}^{k_{s}-1}g_{1}^{k_{1}}\cdots g_{s-1}^{k_{s-1}}g_{s}^{\gamma_{s}}\\
      &&-\sum_{\beta_{t}=0}^{i_{t}-1}g_{1}^{i_{1}}\cdots g_{t-1}^{i_{t-1}}g_{t}^{\beta_{t}}
      \sum_{\alpha_{s}=0}^{j_{s}-1}g_{1}^{j_{1}}\cdots g_{s-1}^{j_{s-1}}g_{s}^{\alpha_{s}}
     (g_{1}^{k_{1}}\cdots g_{s-1}^{k_{s-1}}-1)\\
     &&+\sum_{\beta_{t}=0}^{i_{t}-1}g_{1}^{i_{1}}\cdots g_{t-1}^{i_{t-1}}g_{t}^{\beta_{t}}
     (g_{1}^{j_{1}}\cdots g_{t-1}^{j_{t-1}}-g_{1}^{j_{1}}\cdots g_{s}^{j_{s}})
     \sum_{\gamma_{s}=0}^{k_{s}-1}g_{1}^{k_{1}}\cdots g_{s-1}^{k_{s-1}}g_{s}^{\gamma_{s}}\\
     &&-(g_{1}^{i_{1}}\cdots g_{n}^{i_{n}}-g_{1}^{i_{1}}\cdots g_{t}^{i_{t}})
      \sum_{\alpha_{t}=0}^{j_{t}-1}g_{1}^{j_{1}}\cdots g_{t-1}^{j_{t-1}}g_{t}^{\alpha_{t}}
      \sum_{\gamma_{s}=0}^{k_{s}-1}g_{1}^{k_{1}}\cdots g_{s-1}^{k_{s-1}}g_{s}^{\gamma_{s}}.
    \end{eqnarray*} It is not hard to see that this is equal to \eqref{eq3.7}. Therefore, $dF_{3}=F_{2}\partial_{3}.$

The proof is completed.
\end{proof}

Now we are able to accomplish the main task with a help of the results obtained above. Define $A$ to be the set of all sequences like
\begin{equation}\label{3.1}(a_{1},\ldots,a_{l},\ldots,a_{n},a_{12},\ldots,a_{ij},\ldots,a_{n-1,n},a_{123},
\ldots,a_{rst},\ldots,a_{n-2,n-1,n})\end{equation}
such that $ 0\leq a_{l}<m_{l}, \ 0\leq a_{ij}<(m_{i},m_{j}), \ 0\leq a_{rst}<(m_{r},m_{s},m_{t})$ for $1\leq l\leq n, \ 1\leq i<j\leq n, \ 1\leq r<s<t\leq n$ where $a_{ij}$ and $a_{rst}$ are ordered by the lexicographic order. In the following, the sequence \eqref{3.1} is denoted by $\underline{\mathbf{a}}$ for short.

For any $\underline{\mathbf{a}}\in A$, define a $\mathbb{Z}G$-module morphism:
\begin{eqnarray}\label{3cocycle}
&& \Phi_{\underline{\mathbf{a}}}:\;B_{3}\To \k^{\ast} \notag \\
&&[g_{1}^{i_{1}}\cdots g_{n}^{i_{n}},g_{1}^{j_{1}}\cdots g_{n}^{j_{n}},g_{1}^{k_{1}}\cdots g_{n}^{k_{n}}] \mapsto \\ &&\prod_{l=1}^{n}\zeta_{m_{l}}^{a_{l}i_{l}[\frac{j_{l}+k_{l}}{m_{l}}]}
\prod_{1\leq s<t\leq n}\zeta_{m_{t}}^{a_{st}i_{t}[\frac{j_{s}+k_{s}}{m_{s}}]}
\prod_{1\leq r<s<t\leq n}\zeta_{(m_{r},m_{s},m_{t})}^{-a_{rst}k_{r}j_{s}i_{t}}. \notag
\end{eqnarray}

\begin{proposition}\label{p3.8}  Suppose that $\k$ is an algebraically closed field of characteristic zero and $G=\mathbb{Z}_{m_{1}}\times\cdots \times\mathbb{Z}_{m_{n}}.$ Then $\{\Phi_{\underline{\mathbf{a}}}|\underline{\mathbf{a}}\in A\}$ is a complete set of representatives of normalized $3$-cocycles on $G.$
\end{proposition}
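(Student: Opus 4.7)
The plan is to push the complete set of representative 3-cocycles on the small resolution $(K_\bullet^\ast, d_\bullet^\ast)$ from Corollary \ref{co3.5} onto the bar resolution $(B_\bullet^\ast, \partial_\bullet^\ast)$ using the chain map $F_\bullet$ of Proposition \ref{p2.7}. Both $(K_\bullet, d)$ and $(B_\bullet, \partial)$ are free resolutions of the trivial $\mathbb{Z}G$-module $\mathbb{Z}$ (the former by Lemma \ref{l1.3}), so $F_\bullet$ extends to higher degrees by the standard lifting lemma and induces an isomorphism $F^\ast$ on cohomology from $H^\bullet(K_\bullet^\ast)$ onto $H^\bullet(B_\bullet^\ast) = H^\bullet(G, \k^\ast)$. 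Consequently, the pullbacks along $F_3$ of the cocycles in Corollary \ref{co3.5} are again 3-cocycles and their cohomology classes form a complete set of representatives for $H^3(G, \k^\ast)$.

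The concrete step is to verify the identity $\Phi_{\underline{\mathbf{a}}} = f_{\underline{\mathbf{a}}} \circ F_3$, where $f_{\underline{\mathbf{a}}}$ denotes the cocycle from Corollary \ref{co3.5} with data $\underline{\mathbf{a}} \in A$. The key simplification is that $\k^\ast$ is a trivial $G$-module, so when one applies $f_{\underline{\mathbf{a}}}$ to an expression $\sum_i n_i h_i \Psi_\bullet$ (with $n_i \in \mathbb{Z}$ and $h_i \in G$) the group elements $h_i$ drop out and only the total integer coefficient $\sum_i n_i$ survives as an exponent of $f_{\underline{\mathbf{a}}}(\Psi_\bullet)$. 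Reading off the total coefficients from the four summands of $F_3$, one finds that $\Psi_{r,r,r}$, $\Psi_{r,r,t}$, $\Psi_{r,t,t}$, and $\Psi_{r,s,t}$ appear with total integer coefficients $i_r[\frac{j_r+k_r}{m_r}]$, $i_t[\frac{j_r+k_r}{m_r}]$, $k_r[\frac{i_t+j_t}{m_t}]$, and $-k_r j_s i_t$ respectively. Substituting $f_{l,l,l} = \zeta_{m_l}^{a_l}$, $f_{i,i,j} = \zeta_{m_j}^{a_{ij}}$, $f_{i,j,j} = 1$, and $f_{r,s,t} = \zeta_{(m_r,m_s,m_t)}^{a_{rst}}$ as dictated by Corollary \ref{co3.5}, the $\Psi_{r,t,t}$-contribution collapses to $1$ and the remaining three products reproduce exactly formula \eqref{3cocycle} defining $\Phi_{\underline{\mathbf{a}}}$.

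It then remains to check normalization and completeness. For normalization, if any of the three arguments is the unit of $G$ then the corresponding exponent vector $(i_l)$, $(j_l)$, or $(k_l)$ vanishes; every exponent appearing in \eqref{3cocycle} contains such a variable as a factor, using additionally that $[j_l/m_l] = [k_l/m_l] = 0$ whenever $0 \le j_l, k_l < m_l$, so the product is $1$. For completeness, the proposition preceding Corollary \ref{co3.5} computes $|H^3(G, \k^\ast)| = |A|$, and the isomorphism $F_3^\ast$ on cohomology sends the representatives of Corollary \ref{co3.5} to the classes $[\Phi_{\underline{\mathbf{a}}}]$, which are therefore pairwise distinct and exhaust $H^3(G, \k^\ast)$. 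The only real work is the combinatorial bookkeeping in the middle paragraph; all the conceptual content is already packaged into the chain map of Proposition \ref{p2.7}, and no serious obstacle is expected.
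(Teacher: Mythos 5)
Your proof is correct and is essentially the same argument the paper gives, namely pulling back the explicit set of representatives on the small complex $(K_\bullet^\ast, d_\bullet^\ast)$ from Corollary \ref{co3.5} through the chain map $F_3$ of Proposition \ref{p2.7} and using that a chain map between free resolutions induces an isomorphism on cohomology. The paper's own proof is a one-line pointer to exactly this; you have simply made the augmentation bookkeeping (the total integer coefficients $i_r[\frac{j_r+k_r}{m_r}]$, $i_t[\frac{j_r+k_r}{m_r}]$, $k_r[\frac{i_t+j_t}{m_t}]$, $-k_r j_s i_t$) and the normalization check explicit, both of which are verified correctly.
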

\begin{proof} This is a direct consequence of Corollary \ref{co3.5} and the definition of the map $F_{3}$ given in Proposition \ref{p2.7}.
\end{proof}

\subsection{3-coboundary} Later on, we will encounter the following problem: Given a $3$-cocycle of the complex $(B^{\ast}_{\bullet},\partial^{\ast}_{\bullet})$,
we have to determine whether it is a $3$-coboundary or not. In this subsection, we want to solve this problem in case $G$ is a finite abelian group. In fact, Lemma \ref{l1.5} already provides us an easy way. For the bar resolution, it is sufficient to give a chain map from $(K_{\bullet},d_{\bullet})$
to $(B_{\bullet},\partial_{\bullet})$, which is a kind of inverse of the chain map defined in the previous subsection and thus becomes much simpler. We use the following three morphisms of $\mathbbm{Z}G$-modules defined in \cite[Section 2]{LOZ}:
\begin{eqnarray*}
F_{1}: &&K_{1}\To B_{1},\;\;\;\;\Psi_r\mapsto [g_r];\\
F_{2}: &&K_{2}\To B_{2},\\
&&\Psi_{r,s}\mapsto [g_r,g_s]-[g_s,g_r],\\
&&\Psi_{r,r}\mapsto \sum_{l=0}^{m_{r}-1}[g_{r}^{l},g_r];\\
F_{3}: &&K_{3}\To B_{3},\\
&&\Psi_{r,s,t}\mapsto[g_r,g_s,g_t]-[g_s,g_r,g_t]-[g_r,g_t,g_s],\\
&&\;\;\;\;\;\;\;\;\;\;\;\;\;\;[g_t,g_r,g_s]+[g_s,g_t,g_r]-[g_t,g_s,g_r],\\
&&\Psi_{r,r,s}\mapsto \sum_{l=0}^{m_{r}-1}([g_{r}^{l},g_r,g_s]-[g_{r}^{l},g_s,g_r]+[g_s,g_{r}^{l},g_r]),\\
&&\Psi_{r,s,s}\mapsto \sum_{l=0}^{m_{s}-1}([g_{r},g_{s}^{l},g_s]-[g_{s}^{l},g_r,g_s]+[g_{s}^{l},g_{s},g_r]),\\
&&\Psi_{r,r,r}\mapsto \sum_{l=0}^{m_{r}-1}[g_{r},g_{r}^{l},g_r],
\end{eqnarray*}
for $0\leq r\leq k$, $0\leq r<s\leq k$ and $0\leq r<s<t\leq k$.

\begin{lemma}\label{l1.6} The following diagram is commutative:

\begin{figure}[hbt]
\begin{picture}(150,50)(50,-40)
\put(0,0){\makebox(0,0){$ \cdots$}}\put(10,0){\vector(1,0){20}}\put(40,0){\makebox(0,0){$K_{3}$}}
\put(50,0){\vector(1,0){20}}\put(80,0){\makebox(0,0){$K_{2}$}}
\put(90,0){\vector(1,0){20}}\put(120,0){\makebox(0,0){$K_{1}$}}
\put(130,0){\vector(1,0){20}}\put(160,0){\makebox(0,0){$K_{0}$}}
\put(170,0){\vector(1,0){20}}\put(200,0){\makebox(0,0){$\mathbbm{Z}$}}
\put(210,0){\vector(1,0){20}}\put(240,0){\makebox(0,0){$0$}}

\put(0,-40){\makebox(0,0){$ \cdots$}}\put(10,-40){\vector(1,0){20}}\put(40,-40){\makebox(0,0){$B_{3}$}}
\put(50,-40){\vector(1,0){20}}\put(80,-40){\makebox(0,0){$B_{2}$}}
\put(90,-40){\vector(1,0){20}}\put(120,-40){\makebox(0,0){$B_{1}$}}
\put(130,-40){\vector(1,0){20}}\put(160,-40){\makebox(0,0){$B_{0}$}}
\put(170,-40){\vector(1,0){20}}\put(200,-40){\makebox(0,0){$\mathbbm{Z}$}}
\put(210,-40){\vector(1,0){20}}\put(240,-40){\makebox(0,0){$0$}}

\put(40,-10){\vector(0,-1){20}}
\put(80,-10){\vector(0,-1){20}}
\put(120,-10){\vector(0,-1){20}}
\put(158,-10){\line(0,-1){20}}\put(160,-10){\line(0,-1){20}}
\put(198,-10){\line(0,-1){20}}\put(200,-10){\line(0,-1){20}}

\put(60,5){\makebox(0,0){$d$}}
\put(100,5){\makebox(0,0){$d$}}
\put(140,5){\makebox(0,0){$d$}}

\put(60,-35){\makebox(0,0){$\partial_{3}$}}
\put(100,-35){\makebox(0,0){$\partial_{2}$}}
\put(140,-35){\makebox(0,0){$\partial_{1}$}}

\put(50,-20){\makebox(0,0){$F_{3}$}}
\put(90,-20){\makebox(0,0){$F_{2}$}}
\put(130,-20){\makebox(0,0){$F_{1}$}}
\end{picture}
\end{figure}
\end{lemma}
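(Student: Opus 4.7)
The plan is to verify the three identities $\partial_i\circ F_i = F_{i-1}\circ d$ (for $i=1,2,3$) by direct computation on a basis of generators of $K_\bullet$, in the same spirit as Proposition \ref{p2.7} but now in the reverse direction, i.e.\ from the tensor-product resolution to the bar resolution. Since $F_{i-1}$ is linear over $\Z G$, each verification reduces to comparing two explicit elements of $B_{i-1}$. The two global ingredients that make every cancellation work are the abelianness of $G$ (so $[g_ig_j,\cdot] = [g_jg_i,\cdot]$ and similarly for the second slot) and the normalization $[1]=0$ in the bar resolution (so telescoping sums over the cyclic factors close up).

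The degree-$1$ check is trivial: $\partial_1 F_1(\Psi_r) = \partial_1[g_r] = g_r-1 = T_r = F_0(d(\Psi_r))$. In degree $2$ there are two sub-cases. For $\Psi_{r,s}$ with $r<s$, one has $d(\Psi_{r,s}) = T_r\Psi_s - T_s\Psi_r$, and the middle terms $[g_rg_s]$ and $[g_sg_r]$ in $\partial_2([g_r,g_s]-[g_s,g_r])$ cancel by abelianness, leaving $(g_r-1)[g_s]-(g_s-1)[g_r]$ as required. For $\Psi_{r,r}$ one has $d(\Psi_{r,r}) = N_r\Psi_r$, and applying $\partial_2$ to $\sum_{l=0}^{m_r-1}[g_r^l,g_r]$ produces $N_r[g_r]$ plus a telescoping sum $\sum_l(-[g_r^{l+1}]+[g_r^l])$ which collapses to $-[g_r^{m_r}]+[1]=0$ by normalization.

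The degree-$3$ verification splits into the four index patterns $r{<}s{<}t$, $r{<}r{<}s$, $r{<}s{<}s$, and $r{=}s{=}t$. The case $\Psi_{r,s,t}$ is expected to be the main obstacle, as $F_3(\Psi_{r,s,t})$ is an alternating sum of the six permutations of $[g_r,g_s,g_t]$ and $\partial_3$ on it therefore produces $24$ terms. The plan is to organize these into two groups: the $12$ ``merged'' terms of the form $\pm[g_ig_j,g_k]$ or $\pm[g_i,g_jg_k]$, which cancel in six pairs via the abelianness of $G$, and the $12$ ``outer'' terms of the form $g_i[\cdot,\cdot]$ or $[\cdot,\cdot]$, which regroup into $T_r([g_s,g_t]-[g_t,g_s]) - T_s([g_r,g_t]-[g_t,g_r]) + T_t([g_r,g_s]-[g_s,g_r])$; this is exactly $F_2(T_r\Psi_{s,t}-T_s\Psi_{r,t}+T_t\Psi_{r,s}) = F_2(d(\Psi_{r,s,t}))$.

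The three remaining cases blend the $r{<}s{<}t$ combinatorics with the telescoping argument used for $\Psi_{r,r}$. For instance, on $\Psi_{r,r,s}$ one has $d(\Psi_{r,r,s}) = N_r\Psi_{r,s}-T_s\Psi_{r,r}$, so after expanding $\partial_3$ on the $3m_r$-term sum defining $F_3(\Psi_{r,r,s})$ the abelian-cancellation of merged terms again applies, while the summation in $l$ produces the $N_r$-action on $F_2(\Psi_{r,s})$, and the extra boundary contribution from $l=m_r$ versus $l=0$ supplies the $T_s F_2(\Psi_{r,r})$ piece. The cases $\Psi_{r,s,s}$ and $\Psi_{r,r,r}$ are entirely analogous, the last one being essentially the cyclic-group computation already built into the resolution \eqref{eq;3}. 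No new conceptual idea is needed beyond these two simplification principles; the task is purely to keep track of the bookkeeping carefully.
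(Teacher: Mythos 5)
The paper gives no proof for this lemma (``The proof is routine and indeed becomes much easier, so we omit it''), so your write-up is the only available argument, and your strategy --- verifying $\partial_i F_i = F_{i-1}d$ on each $\Psi$-generator, cancelling the merged terms $[g_ig_j,\cdot]$, $[\cdot,g_jg_k]$ by abelianness of $G$, and closing the cyclic sums by telescoping together with $g_r^{m_r}=g_r^0$ --- is exactly the intended direct computation. Your degree-$1$ and degree-$2$ checks and the cases $\Psi_{r,s,t}$, $\Psi_{r,r,r}$ are correct.

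There is, however, a sign slip in the $\Psi_{r,r,s}$ case and a slight misattribution. From the definition of $d$, the relevant sign is $(-1)^{\sum_{l<s}a_l}=(-1)^{a_r}=(-1)^2=+1$, so $d(\Psi_{r,r,s})=N_r\Psi_{r,s}+T_s\Psi_{r,r}$, \emph{not} $N_r\Psi_{r,s}-T_s\Psi_{r,r}$; this agrees with the sign used in the proof of Lemma~\ref{l1.5}. Correspondingly, the $T_sF_2(\Psi_{r,r})$ piece does not come from a boundary contribution at $l=m_r$ versus $l=0$ --- those telescoping endpoints in fact vanish, since $g_r^{m_r}=g_r^0=1$ --- but from the pair of untelescoped terms $g_s\sum_l[g_r^l,g_r]$ (the first term of $\partial_3[g_s,g_r^l,g_r]$) and $-\sum_l[g_r^l,g_r]$ (the last term of $\partial_3[g_r^l,g_r,g_s]$), whose sum is $(g_s-1)F_2(\Psi_{r,r})=T_sF_2(\Psi_{r,r})$. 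With these two corrections the $\Psi_{r,r,s}$ computation closes, and the mirror case then uses $d(\Psi_{r,s,s})=T_r\Psi_{s,s}-N_s\Psi_{r,s}$; the rest of your plan goes through as stated.
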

\begin{proof} The proof is routine and indeed becomes much easier, so we omit it.
\end{proof}

\begin{corollary}\label{c1.7} Let $\phi\in B_{3}^{\ast}$ be a $3$-cocycle. Then $\phi$ is a
$3$-coboundary if and only if $F_{3}^{\ast}(\phi)$ is a $3$-coboundary.\end{corollary}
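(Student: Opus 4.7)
The forward direction is immediate from the chain map identity of Lemma \ref{l1.6}. The commutativity $\partial_{3}F_{3}=F_{2}d_{3}$ dualizes to $F_{3}^{\ast}\partial_{3}^{\ast}=d_{3}^{\ast}F_{2}^{\ast}$, so whenever $\phi=\partial_{3}^{\ast}(\psi)$ for some $\psi\in B_{2}^{\ast}$, we get $F_{3}^{\ast}(\phi)=d_{3}^{\ast}(F_{2}^{\ast}(\psi))$, which is a $3$-coboundary in $(K_{\bullet}^{\ast},d_{\bullet}^{\ast})$. This uses nothing beyond Lemma \ref{l1.6}.

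For the reverse direction, my plan is to invoke the comparison theorem of homological algebra. Both $(B_{\bullet},\partial_{\bullet})$ and $(K_{\bullet},d_{\bullet})$ are free (hence projective) resolutions of the trivial $\mathbbm{Z}G$-module $\mathbbm{Z}$, and inspection of the commutative diagrams in Lemma \ref{l1.6} and Proposition \ref{p2.7} shows that $F_{\bullet}\colon K_{\bullet}\to B_{\bullet}$ and the chain map constructed in Proposition \ref{p2.7}, which I will denote $\widetilde{F}_{\bullet}\colon B_{\bullet}\to K_{\bullet}$, both lift $\mathrm{id}_{\mathbbm{Z}}$. The comparison theorem then produces chain homotopies $F_{\bullet}\widetilde{F}_{\bullet}\simeq \mathrm{id}_{B_{\bullet}}$ and $\widetilde{F}_{\bullet}F_{\bullet}\simeq \mathrm{id}_{K_{\bullet}}$. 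Applying $\Hom_{\mathbbm{Z}G}(-,\k^{\ast})$ preserves these homotopies, so $F_{\bullet}^{\ast}$ and $\widetilde{F}_{\bullet}^{\ast}$ descend to mutually inverse isomorphisms on cohomology, both computing $\H^{\ast}(G,\k^{\ast})$.

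Consequently, the cohomology class of $\phi$ in $\H^{3}((B_{\bullet}^{\ast},\partial^{\ast}))$ is trivial if and only if the cohomology class of $F_{3}^{\ast}(\phi)$ in $\H^{3}((K_{\bullet}^{\ast},d^{\ast}))$ is trivial, which is exactly the claim. If one prefers an explicit argument without invoking the abstract theorem, one can carry out the following: given $F_{3}^{\ast}(\phi)=d_{3}^{\ast}(\xi)$, apply $\widetilde{F}_{3}^{\ast}$ and use $\widetilde{F}_{\bullet}^{\ast}d^{\ast}=\partial^{\ast}\widetilde{F}_{\bullet}^{\ast}$ to obtain $\widetilde{F}_{3}^{\ast}F_{3}^{\ast}(\phi)=\partial_{3}^{\ast}(\widetilde{F}_{2}^{\ast}(\xi))$, a coboundary; the cochain homotopy $\widetilde{F}_{3}^{\ast}F_{3}^{\ast}-\mathrm{id}=\partial^{\ast}s^{\ast}+s^{\ast}\partial^{\ast}$ (where $s$ is the homotopy $F\widetilde{F}\simeq \mathrm{id}$) then shows that $\phi$ itself differs from this coboundary by $\partial^{\ast}(s^{\ast}(\phi))$, because $\phi$ is a cocycle. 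Hence $\phi$ is a coboundary.

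The only subtlety is ensuring that the chain maps of Proposition \ref{p2.7} and Lemma \ref{l1.6} are genuine lifts of $\mathrm{id}_{\mathbbm{Z}}$, which is visible from the rightmost squares in both diagrams where the vertical maps at $\mathbbm{Z}$ are drawn as equalities. No further computation is required beyond what has already been done.
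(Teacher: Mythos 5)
Your proof is correct and takes essentially the same route as the paper, which simply says "Follows from the fact that $F_{3}^{\ast}$ induces an isomorphism between $3$-cohomology groups." You have unpacked that one-line justification into the standard comparison-theorem argument (both $(K_\bullet,d)$ and $(B_\bullet,\partial)$ are free resolutions of $\mathbbm{Z}$, so the chain maps of Proposition \ref{p2.7} and Lemma \ref{l1.6} are homotopy inverses, hence induce mutually inverse isomorphisms on cohomology), which is exactly the content the paper is invoking.
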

\begin{proof} Follows from the fact that $F_{3}^{\ast}$ induces an isomorphism between $3$-cohomology groups.
\end{proof}

\subsection{Abelian cocycles}\label{s3.2} We start with the definition of abelian cocycles.
For this, we need to recall the definition
of the twisted quantum double. The {\em twisted quantum double} $D^{\Phi}(G)$ of $G$ with respect to the $3$-cocycle
$\Phi$ over $G$ is the semisimple quasi-Hopf algebra with underlying vector
space $(\k G)^{\ast} \otimes \k G$  in which multiplication,
comultiplication $\Delta$, associator $\phi$, counit $\varepsilon$, antipode
$\S$, $\alpha$ and $\beta$ are given by
\begin{eqnarray*}
&&(e(g) \otimes x)(e(h) \otimes y) =\theta_g(x,y) \delta_{g^x,h}
e(g)\otimes x y,\\
&&\Delta(e(g)\otimes x)  = \sum_{hk=g} \gamma_x(h,k) e(h)\otimes x
\otimes e(k) \otimes x,\\
&&\phi = \sum_{g,h,k \in G} \Phi(g,h,k)^{-1} e(g) \otimes 1 \otimes
e(h) \otimes 1\otimes e(k) \otimes 1,\\
&&\S(e(g)\otimes x) =
\theta_{g^{-1}}(x,x^{-1})^{-1}\gamma_x(g,g^{-1})^{-1}e(x^{-1}g^{-1}x)\otimes
x^{-1},\\
&&\varepsilon(e(g)\otimes x) = \delta_{g,1}, \quad \alpha=1, \quad \beta=\sum_{g \in
G} \Phi(g,g^{-1},g)e(g)\otimes 1,
\end{eqnarray*}
where $\{e(g)|g\in G\}$ is the dual basis of $\{g|g\in G\}$,  $\delta_{g,1}$ is the Kronecker delta,  $g^x=x^{-1}g x$, and
\begin{eqnarray*}
\theta_g(x,y) &=&\frac{\Phi(g,x,y)\Phi(x,y,(x y)^{-1}g x y)}{\Phi(x,x^{-1}g x,y)}, \\
\gamma_g(x,y) & = & \frac{\Phi(x,y,g)\Phi(g, g^{-1}x g, g^{-1}yg)}{\Phi(x,g,
g^{-1}y g)}
\end{eqnarray*}
for any $x, y, g \in G$ (cf. \cite{DPR}).

Clearly,  $M$ is a
left $D^{\Phi}(G)$-module if and only if $M$ is a left-left Yetter-Drinfeld module over $(\k G,\Phi)$ as defined in the previous section. For our purpose, we prefer the following equivalent definition of abelian cocycles via twisted quantum doubles appeared in \cite{MN}.

\begin{definition} 
A $3$-cocycle  $\Phi$ over $G$ is called \emph{abelian} if $D^{\Phi}(G)$ is a commutative algebra.
\end{definition}

\begin{remark} 
Abelian cocycles of the previous form and  some related properties were discussed by Ng \cite{Ng} and by Ng-Mason \cite{MN}. In \cite{Ng}, Ng gave a quite symmetric description of abelian cocycles. Note that the Eilenberg-Mac Lane abelian cocycles \cite{EM} are different from the present ones. Recall that, an Eilenberg-Mac Lane abelian cocycle is a pair $(\Phi, d)$ where $\Phi\in \Z^{3}(G,\k^\ast)$ and $d$ is a braiding which is compatible with $\Phi$. But, we still have the following observation: if $(\Phi,d)$ is an Eilenberg-Mac Lane abelian cocycle, then $\Phi$ must be an abelian cocycle in our sense. As this fact is not necessary for our following discussions, here we won't provide a proof.
\end{remark}

As a direct consequence of this definition, we have the following conclusion.
\begin{corollary}\label{c3.5} Every Yetter-Drinfeld module over $(\k G,\Phi)$ is diagonal if and only if $\Phi$ is abelian.
\end{corollary}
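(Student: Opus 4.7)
I will prove both directions through the standard identification of $^G_G\mathcal{YD}^\Phi$ with the module category of the twisted quantum double $D^\Phi(G)$, which is exactly the functorial content of the definition of $D^\Phi(G)$ recalled just above the corollary. Under this identification, the homogeneous decomposition $V = \bigoplus_{g\in G} {}^gV$ of a Yetter--Drinfeld module is realized by the mutually orthogonal idempotents $e(g)\otimes 1 \in D^\Phi(G)$, and a one-dimensional $D^\Phi(G)$-submodule of $V$ sitting entirely in a single degree $g$ is precisely a one-dimensional $\widetilde{\Phi}_g$-projective $G$-representation in the sense of Definition~\ref{d2.9}. Consequently, $V$ is diagonal in the sense of Remark~2.11 if and only if $V$, regarded as a $D^\Phi(G)$-module, decomposes as a direct sum of one-dimensional submodules.

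With this reformulation the corollary becomes a purely structural statement about the algebra $D^\Phi(G)$. Since $G$ is finite and $\mathrm{char}\,\k = 0$, Maschke's theorem implies that $D^\Phi(G)$ is a semisimple $\k$-algebra, hence by Artin--Wedderburn decomposes as $D^\Phi(G) \cong \prod_{i} M_{n_i}(\k)$. Every left module is then a direct sum of copies of the simple modules $S_i$ with $\dim S_i = n_i$, so every module admits a decomposition into one-dimensional submodules if and only if all $n_i = 1$, which is equivalent to $D^\Phi(G)$ being commutative, i.e.\ $\Phi$ being abelian. Combining this with the equivalence of the first paragraph gives the corollary.

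The argument is essentially structural rather than computational, and I expect no serious obstacle. The only point that needs some care is the bijective correspondence claimed in the first paragraph between one-dimensional $\widetilde{\Phi}_g$-projective representations of $G$ and one-dimensional $D^\Phi(G)$-submodules concentrated in degree $g$. This reduces to comparing the formula $\widetilde{\Phi}_g(e,f) = \Phi(g,e,f)\Phi(e,f,g)/\Phi(e,g,f)$ of Definition~\ref{d2.9} with the multiplication cocycle $\theta_g(x,y) = \Phi(g,x,y)\Phi(x,y,(xy)^{-1}g(xy))/\Phi(x,x^{-1}gx,y)$ of $D^\Phi(G)$; since $G$ is abelian we have $g^x = g$ and $(xy)^{-1}g(xy) = g$, so $\theta_g(x,y) = \widetilde{\Phi}_g(x,y)$ and the two notions of projective representation coincide.
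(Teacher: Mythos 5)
Your proof is correct and takes exactly the route the paper intends: the paper states the corollary as ``a direct consequence of this definition'' (namely, the identification of $D^\Phi(G)$-modules with Yetter--Drinfeld modules together with the semisimplicity of $D^\Phi(G)$), and your argument simply spells out the standard Artin--Wedderburn reasoning that is left implicit, including the useful observation that $\theta_g$ reduces to $\widetilde{\Phi}_g$ when $G$ is abelian.
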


Now we go back to the situation where $\G$ is an abelian group. So $\G\cong \mathbb{Z}_{\m_{1}}\times\cdots \times\mathbb{Z}_{\m_{n}}$ with $\m_j\in \mathbb{N}$
for $1\leq j\leq n$ and $\m_{i}|\m_{i+1}$
for all $1\leq i\leq n-1$. Let $\g_i$ be a generator of $\mathbb{Z}_{\m_{i}}$. By Proposition \ref{p3.8}, we can assume that $\Phi=\Phi_{\underline{\textbf{a}}}$ for some $\textbf{a} \in A.$
Using our formula of $3$-cocycles, we have the following conclusion which provides us a quite explicit description of abelian cocycles.

\begin{proposition}\label{p3.9} The $3$-cocycle $\Phi_{\underline{\mathbf{a}}}$ is  abelian if and only if $a_{rst}=0$
for all $1\leq r<s<t\leq n$.
\end{proposition}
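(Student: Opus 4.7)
The plan is to reduce the abelian condition on $\Phi_{\underline{\mathbf{a}}}$ to a symmetry identity for the 2-cocycles $\widetilde{\Phi}_g$, and then use the explicit formula \eqref{3cocycle} to isolate the triple-index part of $\underline{\mathbf{a}}$ as the unique source of obstruction. The reduction step is immediate: since $\G$ is abelian, $x^{-1}gx=g$ and $(xy)^{-1}gxy=g$, so the multiplication rule in $D^{\Phi}(\G)$ collapses to $(e(g)\otimes x)(e(h)\otimes y)=\widetilde{\Phi}_g(x,y)\,\delta_{g,h}\,e(g)\otimes xy$, with $\widetilde{\Phi}_g$ exactly the 2-cocycle introduced before Definition \ref{d2.9}. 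Commutativity of $D^{\Phi}(\G)$ is therefore equivalent to $\widetilde{\Phi}_g(x,y)=\widetilde{\Phi}_g(y,x)$ for all $g,x,y\in\G$, i.e.\ to $F(g,x,y)=1$ where
\[ F(g,x,y) := \frac{\Phi(g,x,y)\,\Phi(x,y,g)\,\Phi(y,g,x)}{\Phi(g,y,x)\,\Phi(y,x,g)\,\Phi(x,g,y)}. \]

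Next I would substitute \eqref{3cocycle} into each of the six factors of $F$, writing $g=\g_1^{i_1}\cdots\g_n^{i_n}$, $x=\g_1^{j_1}\cdots\g_n^{j_n}$, $y=\g_1^{k_1}\cdots\g_n^{k_n}$, and split the analysis by exponent type. The $a_l$-piece of $\Phi(A,B,C)$ is $\zeta_{m_l}^{a_l\alpha_l[(\beta_l+\gamma_l)/m_l]}$ and the $a_{st}$-piece is $\zeta_{m_t}^{a_{st}\alpha_t[(\beta_s+\gamma_s)/m_s]}$; both are symmetric in the two ``floored'' slots, so tabulating the six permutations produces the same three monomial exponents in the numerator and denominator of $F$. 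Hence these contributions cancel, and only the triple-index part survives, giving
\[ F(g,x,y) = \prod_{1\leq r<s<t\leq n} \zeta_{(m_r,m_s,m_t)}^{-a_{rst}\,B_{rst}(i,j,k)}, \]
where $B_{rst}(i,j,k)=(k_rj_s-j_rk_s)i_t+(i_rk_s-k_ri_s)j_t+(j_ri_s-i_rj_s)k_t$, manifestly alternating in the swap $j\leftrightarrow k$. Thus if every $a_{rst}=0$ then $F\equiv 1$ and $\Phi_{\underline{\mathbf{a}}}$ is abelian, giving the ``if'' direction.

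For the converse, assume some $a_{rst}\neq 0$; by the range condition on $\underline{\mathbf{a}}$ this means $0<a_{rst}<(m_r,m_s,m_t)$. I would test $F$ at $g=\g_t$, $x=\g_s$, $y=\g_r$, so that $i=e_t$, $j=e_s$, $k=e_r$. A case analysis on the six monomials of $B_{r's't'}(i,j,k)$ shows that each demands an ordering of a $3$-subset of $\{1,\dots,n\}$ incompatible with $r'<s'<t'$, except for the term $k_rj_si_t=1$ appearing in $B_{rst}$, so $F(\g_t,\g_s,\g_r)=\zeta_{(m_r,m_s,m_t)}^{-a_{rst}}\neq 1$, producing a non-commuting pair in $D^{\Phi}(\G)$. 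The main obstacle I expect is the bookkeeping in the cancellation step: one must carefully match the six floor-function expressions across the three index-types in \eqref{3cocycle}. This is routine but unavoidable, and a clean exposition should highlight the invariance $[(\beta+\gamma)/m]=[(\gamma+\beta)/m]$ as the sole mechanism driving the $a_l$- and $a_{st}$-cancellations, so that the antisymmetric bracket $B_{rst}$ stands out as the only true obstruction.
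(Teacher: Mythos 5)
Your proposal is correct and follows essentially the same route as the paper: reduce commutativity of $D^{\Phi}(\G)$ to the symmetry $\theta_g(x,y)=\theta_g(y,x)$ (which for abelian $\G$ is exactly $\widetilde{\Phi}_g(x,y)=\widetilde{\Phi}_g(y,x)$), substitute the explicit formula \eqref{3cocycle}, observe that the $a_l$- and $a_{st}$-pieces impose no constraint, and for the converse test on generators to extract a nontrivial root of unity. The only cosmetic difference is that the paper's ``if'' direction records the stronger symmetry $\Phi_{\underline{\mathbf{a}}}(x,y,z)=\Phi_{\underline{\mathbf{a}}}(x,z,y)$ when all $a_{rst}=0$, whereas you carry out the full six-factor cancellation to isolate the antisymmetric bracket $B_{rst}$ — slightly more bookkeeping, same mechanism.
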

\begin{proof} ``$\Leftarrow$:"$\;\;$ If all $a_{rst}=0$, then by \eqref{3cocycle} it is not hard to find that
$$\Phi_{\underline{\mathbf{a}}}{(x,y,z)}=\Phi_{\underline{\mathbf{a}}}(x,z,y)$$
for $x,y,z\in \G$. From this, we can find that $$\theta_g(x,y)=\theta_g(y,x)$$
for $g,x,y\in \G$, which implies that $D^{\Phi_{\underline{\mathbf{a}}}}(\G)$ is a commutative.

``$\Rightarrow:$"$\;\;$ If $a_{rst}\neq 0$ for some $r<s<t$. For simple, assume that $a_{123}\neq 0$. Through direct
computations, we have
$$\theta_{\g_1}(\g_2,\g_3)=1,\;\;\;\;\;\theta_{\g_1}(\g_3,\g_2)=\zeta_{\m_{1}}^{-a_{123}}.$$
This implies that $$(e(\g_1)\otimes \g_2)(e(\g_1)\otimes \g_3)\neq (e(\g_1)\otimes \g_3)(e(\g_1)\otimes \g_2).$$
\end{proof}

\subsection{Resolution} Let $\G=\Z_{\m_1}\times\cdots\times \Z_{\m_n}$ as before and $\Phi_{\underline{\mathbf{a}}}$ be an abelian $3$-cocycle of $\G$.
One of our key observations is that $\Phi_{\underline{\mathbf{a}}}$ can be ``resolved" in
 a slightly bigger abelian group $G$. More precisely, take $G=\mathbb{Z}_{m_1}\times \cdots\times\mathbb{Z}_{m_n}$
for $m_i=\mathbbm{m}_i^2\; (1\le i\le n)$. As before,  let ${\g}_{i}$ (resp. ${g}_{i}$) be a generator of $\mathbb{Z}_{\m_i}$
(resp. $\mathbb{Z}_{m_i}$) for $1\le i\le n$. Using such notations, we have a canonical group epimorphism:
$$\pi:\;G\to \mathbbm{G},\;\;\;\;g_{i}\mapsto \mathbbm{g}_{i}\; (1\le i\le n).$$
From this map, we can pull back the $3$-cocycles of $\mathbb{G}$ and get many $3$-cocycles over $G$. That
is,  the map
$$\pi^{\ast}(\Phi_{\underline{\mathbf{a}}}):\;G\times G\times G\to k^{\ast},\;\;(g,h,z)\mapsto \Phi_{\underline{\mathbf{a}}}(\pi(g),\pi(h),\pi(z)),\;\;\;g,h,z\in G$$
is a $3$-cocycle of $G$. Our observation is that $\pi^{\ast}(\Phi_{\underline{\mathbf{a}}})$ is indeed a boundary.
In fact, consider the following map
\begin{equation}\label{2cochain} J_{\underline{\mathbf{a}}}:\; G\times G\to k^{\ast};\;\;\;\;(g_{1}^{x_{1}}\cdots g_{n}^{x_{n}},g_{1}^{y_{1}}\cdots g_{n}^{y_{n}})\mapsto
\prod_{l=1}^{n}\zeta_{m_l}^{a_{l}x_{l}(y_l-y_l')}
\prod_{1\leq s<t\leq n}\zeta_{\m_s\m_{t}}^{a_{st}x_{t}(y_s-y_s')}
\end{equation}
where $y_i'$ is the remainder of $y_i$ divided by $\mathbbm{m}_i$ for $1\le i\le n$.
Here for simple, we just take $\zeta_{t}=e^{\frac{2\pi i}{t}}$ for $t\in \mathbb{N}$. Thus, we have

\begin{proposition}\label{p3.10} The differential of $J_{\underline{\mathbf{a}}}$ equals to $\pi^{\ast}(\Phi_{\underline{\mathbf{a}}})$, that is
$$\partial(J_{\underline{\mathbf{a}}})=\pi^{\ast}(\Phi_{\underline{\mathbf{a}}}).$$
\end{proposition}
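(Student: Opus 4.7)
The plan is to verify the equality by direct calculation, exploiting the fact that both sides decompose as products of elementary factors. Specifically, $J_{\underline{\mathbf{a}}}$ is a product of ``diagonal'' factors $\zeta_{m_l}^{a_l x_l(y_l - y_l')}$ for $1 \le l \le n$ and ``pair'' factors $\zeta_{\m_s \m_t}^{a_{st} x_t(y_s - y_s')}$ for $1 \le s < t \le n$, while $\Phi_{\underline{\mathbf{a}}}$ on $\G$ has analogous diagonal, pair, and triple factors. Since the coboundary operator is multiplicative and group multiplication in $G$ acts slot by slot, $\partial J_{\underline{\mathbf{a}}}$ decomposes correspondingly. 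Under the abelian hypothesis, Proposition~\ref{p3.9} kills the triple-index term of $\Phi_{\underline{\mathbf{a}}}$, so both $\partial J_{\underline{\mathbf{a}}}$ and $\pi^* \Phi_{\underline{\mathbf{a}}}$ have only diagonal and pair factors, which we match one at a time.

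For the diagonal factor at index $l$, first rewrite $\zeta_{m_l}^{a_l x_l(y_l - y_l')} = \zeta_{\m_l}^{a_l x_l [y_l/\m_l]}$, using $y_l - y_l' = \m_l [y_l/\m_l]$ and $m_l = \m_l^2$. The coboundary exponent telescopes, after using $(xy)_l \equiv x_l + y_l \pmod{\m_l}$ (which holds because $\m_l$ divides $m_l$) together with the identity
$$[(yz)_l/\m_l] = [y_l/\m_l] + [z_l/\m_l] + [(y_l' + z_l')/\m_l] - \m_l\,[(y_l + z_l)/m_l],$$
a direct consequence of Lemma~\ref{l3.6}, to yield $a_l x_l [(y_l' + z_l')/\m_l]$ modulo $\m_l$. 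Since $x_l \equiv x_l' \pmod{\m_l}$, this equals $a_l x_l' [(y_l' + z_l')/\m_l]$, exactly the diagonal part of $\pi^* \Phi_{\underline{\mathbf{a}}}$. An entirely analogous computation handles the pair factor at $(s, t)$: after the simplification $\zeta_{\m_s \m_t}^{a_{st} x_t(y_s - y_s')} = \zeta_{\m_t}^{a_{st} x_t [y_s/\m_s]}$, the same carry identity applied in the $s$-th slot produces $\zeta_{\m_t}^{a_{st} x_t' [(y_s' + z_s')/\m_s]}$, matching the pair component of $\pi^* \Phi_{\underline{\mathbf{a}}}$.

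The main technical obstacle is the pair case, which demands simultaneous bookkeeping of two different modular reductions: modulo $m_l = \m_l^2$ when multiplying in $G$, and modulo $\m_l$ when descending to $\G$ via $\pi$. All auxiliary ``overflow'' contributions --- those involving the carries $[(x_l + y_l)/m_l]$ and $[(y_l + z_l)/m_l]$ --- live in a sublattice that vanishes in the relevant root-of-unity base, and Lemma~\ref{l3.6} supplies the clean telescoping that makes this possible. Once the accounting is done carefully, both sides agree factor by factor, and the proposition follows.
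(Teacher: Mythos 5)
Your overall route matches the paper's: a direct expansion of the coboundary, Lemma~\ref{l3.6} for the floor-function bookkeeping, and a factor-by-factor match with $\pi^*\Phi_{\underline{\mathbf{a}}}$. The paper collects all factors in one long product; you split into diagonal and pair pieces via multiplicativity of $\partial$. That is purely a reorganization of the same arithmetic.

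The gap is in the claim that ``all auxiliary overflow contributions $\ldots$ vanish in the relevant root-of-unity base.'' The paper's own computation never produces those overflows, because it substitutes the \emph{unreduced} strings $g_1^{j_1+k_1}\cdots g_n^{j_n+k_n}$ and $g_1^{i_1+j_1}\cdots g_n^{i_n+j_n}$ directly into the formula for $J_{\underline{\mathbf{a}}}$; no carry $[(y_s+z_s)/m_s]$ or $[(x_t+y_t)/m_t]$ appears. You instead insert the canonical reduced representatives of $yz$ and $xy$, which produces those carries, and then dismiss them. For the diagonal factor that is sound: the carry feeds $a_l x_l m_l[\cdot]$ into the exponent of $\zeta_{m_l}$, a multiple of $m_l$. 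For the pair factor it is not automatic: the carry feeds $a_{st}x_t m_s[\cdot]=a_{st}x_t\m_s^2[\cdot]$ into the exponent of $\zeta_{\m_s\m_t}$, and killing it requires $\m_s\m_t\mid a_{st}x_t\m_s^2$, equivalently $\m_t\mid a_{st}x_t\m_s$; with only $a_{st}<(\m_s,\m_t)=\m_s$ and $x_t$ arbitrary, this can fail whenever $\m_s<\m_t$. Concretely, for $\G=\Z_2\times\Z_4$ (so $G=\Z_4\times\Z_{16}$), $a_{12}=1$, $a_1=a_2=0$, one finds $J_{\underline{\mathbf{a}}}(g_2,g_1^2)=\zeta_8^2$ while $J_{\underline{\mathbf{a}}}(g_2,g_1^6)=\zeta_8^6$, even though $g_1^2=g_1^6$ in $G$; the discrepancy $\zeta_8^4=-1$ is precisely the overflow you are discarding. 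So your argument does not close as written. To align with the paper's proof you should expand $\partial J_{\underline{\mathbf{a}}}$ on the unreduced exponents throughout --- but then you should say explicitly that you are doing so, since it tacitly presumes the value of $J_{\underline{\mathbf{a}}}$ does not depend on the representative chosen in its second argument, and that presumption is the real content hiding behind the overflow step.
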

\begin{proof} Indeed,

$\partial(J_{\underline{\mathbf{a}}})(g_{1}^{i_1}\cdots g_{n}^{i_n},g_{1}^{j_1}\cdots g_{n}^{j_n},g_{1}^{k_1}\cdots g_{n}^{k_n})$
\begin{eqnarray*}
&=&\frac{J_{\underline{\mathbf{a}}}(g_{1}^{j_1}\cdots g_{n}^{j_n},g_{1}^{k_1}\cdots g_{n}^{k_n})
J_{\underline{\mathbf{a}}}(g_{1}^{i_1}\cdots g_{n}^{i_n},g_{1}^{j_1+k_1}\cdots g_{n}^{j_n+k_n})}{J_{\underline{\mathbf{a}}}(g_{1}^{i_1+j_1}\cdots g_{n}^{i_n+j_n},g_{1}^{k_1}\cdots g_{n}^{k_n})J_{\underline{\mathbf{a}}}(g_{1}^{i_1}\cdots g_{n}^{i_n},g_{1}^{j_1}\cdots g_{n}^{j_n})}\\
&=&\frac{\prod_{l=1}^{n}\zeta_{m_l}^{a_{l}j_{l}(k_l-k_l')}
\prod_{1\leq s<t\leq n}\zeta_{\m_s\m_{t}}^{a_{st}j_{t}(k_s-k_s')}\prod_{l=1}^{n}\zeta_{m_l}^{a_{l}i_{l}(j_l+k_l-(j_l+k_l)')}
\prod_{1\leq s<t\leq n}\zeta_{\m_s\m_{t}}^{a_{st}i_{t}(j_s+k_s-(j_s+k_s)')}}
{\prod_{l=1}^{n}\zeta_{m_l}^{a_{l}(i_{l}+j_l)(k_l-k_l')}
\prod_{1\leq s<t\leq n}\zeta_{\m_s\m_{t}}^{a_{st}(i_{t}+j_t)(k_s-k_s')}\prod_{l=1}^{n}\zeta_{m_l}^{a_{l}i_{l}(j_l-j_l')}
\prod_{1\leq s<t\leq n}\zeta_{\m_s\m_{t}}^{a_{st}i_{t}(j_s-j_s')}}\\
&=&\prod_{l=1}^{n}\zeta_{m_l}^{a_{l}i_{l}(j_l'+k_l'-(j_l+k_l)')}
\prod_{1\leq s<t\leq n}\zeta_{\m_s\m_{t}}^{a_{st}i_{t}(j_s'+k_s'-(j_s+k_s)')}\\
&=&\prod_{l=1}^{n}\zeta_{\m_l}^{a_{l}i'_{l}[\frac{j_l'+k_l'}{\m_l}]}
\prod_{1\leq s<t\leq n}\zeta_{\m_{t}}^{a_{st}i'_{t}[\frac{j_s'+k_s'}{\m_s}]}
\\
&=&\pi^{\ast}(\Phi_{\underline{\mathbf{a}}})(g_{1}^{i_1}\cdots g_{n}^{i_n},g_{1}^{j_1}\cdots g_{n}^{j_n},g_{1}^{k_1}\cdots g_{n}^{k_n}).
\end{eqnarray*}
\end{proof}

Although the above conclusion is true for abelian $3$-cocycles, it does not hold for non-abelian $3$-cocycles. Precisely, let $\Phi$ be a non-abelian $3$-cocycle on $\G$. Then we will show that there does not exist any finite abelian group  $G'$ such that there is a group epimorphism $\pi:\; G'\to
\G$ making $\pi^{*}(\Phi)$ to be a coboundary (this is a surprising phenomenon, at least, to us). To prove this fact, we start with the following special case at first, then reduce the general case to this special case.

\begin{lemma}\label{sc} Let $\Phi_{\underline{\mathbf{a}}}$ be a non-abelian $3$-cocycle on $\G$. Suppose that we have a group epimorphism $$\pi\colon\Z_{l_1}\times\cdots \times \Z_{l_n}=\langle g_1\rangle \times\cdots\times \langle g_n\rangle \to \G=\langle \g_1\rangle \times\cdots\times \langle \g_n\rangle,\;\;\;\;g_i\mapsto \g_i,$$
then $\pi^{*}(\Phi_{\underline{\mathbf{a}}})$ is not a coboundary on $\Z_{l_1}\times\cdots \times \Z_{l_n}$.
\end{lemma}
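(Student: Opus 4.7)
The plan is to combine Corollary~\ref{c1.7} with Lemma~\ref{l1.5} so as to reduce the coboundary question on the bar complex of $G':=\Z_{l_1}\times\cdots\times\Z_{l_n}$ to a single explicit evaluation on the tensor resolution $(K'_\bullet,d)$. Since $\Phi_{\underline{\mathbf{a}}}$ is non-abelian, Proposition~\ref{p3.9} furnishes a triple $r<s<t$ with $a_{rst}\neq 0$. The surjectivity of $\pi$ with the prescribed rule $g_i\mapsto \g_i$ forces $\m_i\mid l_i$, so the machinery of Subsections~3.1 and~3.2 applies to $G'$ verbatim, with $l_i$ in place of $m_i$.

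Arguing by contradiction, suppose $\pi^*(\Phi_{\underline{\mathbf{a}}})$ is a $3$-coboundary on $G'$. Corollary~\ref{c1.7} then makes $F_3^*(\pi^*(\Phi_{\underline{\mathbf{a}}}))$ a $3$-coboundary in $(K'^*_\bullet,d^*)$, whence Lemma~\ref{l1.5} (applied to $G'$) demands in particular that $F_3^*(\pi^*(\Phi_{\underline{\mathbf{a}}}))(\Psi_{r,s,t})=1$. Expanding via the explicit form of $F_3$ recorded just before Lemma~\ref{l1.6}, this evaluation equals the alternating product
$$\frac{\Phi_{\underline{\mathbf{a}}}(\g_r,\g_s,\g_t)\,\Phi_{\underline{\mathbf{a}}}(\g_t,\g_r,\g_s)\,\Phi_{\underline{\mathbf{a}}}(\g_s,\g_t,\g_r)}{\Phi_{\underline{\mathbf{a}}}(\g_s,\g_r,\g_t)\,\Phi_{\underline{\mathbf{a}}}(\g_r,\g_t,\g_s)\,\Phi_{\underline{\mathbf{a}}}(\g_t,\g_s,\g_r)}.$$

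The heart of the argument is a direct computation of these six values using the closed formula~\eqref{3cocycle}. I expect five of them to collapse to $1$: the first factor $\zeta_{\m_l}^{a_l i_l [(j_l+k_l)/\m_l]}$ vanishes because the unique index $l$ with $i_l\neq 0$ satisfies $j_l+k_l<\m_l$; the $a_{st}$-factor vanishes by a parallel mechanism, using that the relevant $j_u+k_u$ never reaches $\m_u$; and the $a_{uvw}$-factor is nontrivial only when the three arguments' indices form a strictly increasing triple $u<v<w$. Among the six permutations of $(\g_r,\g_s,\g_t)$ the only such triple is $(\g_t,\g_s,\g_r)$, which contributes $\zeta_{(\m_r,\m_s,\m_t)}^{-a_{rst}}$. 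The alternating product therefore reduces to $\zeta_{(\m_r,\m_s,\m_t)}^{a_{rst}}$, a nontrivial root of unity since $0<a_{rst}<(\m_r,\m_s,\m_t)$, yielding the desired contradiction. The main obstacle is simply the bookkeeping in this six-way case analysis; no further cohomological subtlety is anticipated, since Corollary~\ref{c1.7} and Lemma~\ref{l1.5} together package all the cohomological content of the problem into this one check.
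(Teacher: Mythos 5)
Your proposal is correct and follows essentially the same route as the paper's own proof: invoke Proposition~\ref{p3.9} to get $a_{rst}\neq 0$, use Corollary~\ref{c1.7} and Lemma~\ref{l1.5} to reduce to the single evaluation $F_3^*(\pi^*(\Phi_{\underline{\mathbf{a}}}))(\Psi_{r,s,t})$, and compute the alternating product of six values of $\Phi_{\underline{\mathbf{a}}}$ via formula~\eqref{3cocycle}, finding that only $\Phi_{\underline{\mathbf{a}}}(\g_t,\g_s,\g_r)$ is nontrivial. One small point in your favor: the value you record, $\zeta_{(\m_r,\m_s,\m_t)}^{a_{rst}}$, is the one that formula~\eqref{3cocycle} actually yields, whereas the paper writes $\zeta_{\m_1}^{a_{123}}$, apparently a typo in the base; either way the conclusion $\neq 1$ stands since $0<a_{rst}<(\m_r,\m_s,\m_t)$.
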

\begin{proof} Since $\Phi_{\underline{\mathbf{a}}}$ is not an abelian cocycle, there are $r<s<t$ such that $a_{rst}\neq 0$ by Proposition \ref{p3.9}. Without loss of generality, we assume that $a_{123}\neq 0$. Assume that $\pi^{*}(\Phi_{\underline{\mathbf{a}}})$ is a coboundary.  By Corollary \ref{c1.7}, $F_{3}^{*}(\pi^{*}(\Phi_{\underline{\mathbf{a}}}))$ is coboundary and then Lemma \ref{l1.5} implies that $F_{3}^{*}(\pi^{*}(\Phi_{\underline{\mathbf{a}}}))_{1,2,3}=1$. But, direct computation shows that
\begin{eqnarray*}
F_{3}^{*}(\pi^{*}(\Phi_{\underline{\mathbf{a}}}))_{1,2,3}&=&F_{3}^{*}(\pi^{*}(\Phi_{\underline{\mathbf{a}}}))(\Psi_{1,2,3})\\
&=&\pi^{*}(\Phi_{\underline{\mathbf{a}}})([g_1,g_2,g_3]-[g_2,g_1,g_3]-[g_1,g_3,g_2]+[g_3,g_1,g_2]+[g_2,g_3,g_1]
-[g_3,g_2,g_1])\\
&=&\Phi_{\underline{\mathbf{a}}}([\g_1,\g_2,\g_3]-[\g_2,\g_1,\g_3]-[\g_1,\g_3,\g_2]+[\g_3,\g_1,\g_2]+[\g_2,\g_3,\g_1]
-[\g_3,\g_2,\g_1])\\
&=&\zeta_{\m_1}^{a_{123}}\neq 1.
\end{eqnarray*}
That's a direct contradiction.
\end{proof}
\begin{proposition} Let $\Phi_{\underline{\mathbf{a}}}$ be a non-abelian $3$-cocycle on $\G$ and $G$ be an arbitrary finite abelian group. Suppose that we have a group epimorphism $\pi\colon G \twoheadrightarrow \G$,
then $\pi^{*}(\Phi_{\underline{\mathbf{a}}})$ is not a coboundary on $G$.
\end{proposition}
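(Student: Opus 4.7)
The plan is to reduce the general case to the special situation already handled in Lemma \ref{sc}. Given the arbitrary epimorphism $\pi\colon G \twoheadrightarrow \G$, I will choose a preimage $g_i \in \pi^{-1}(\g_i)$ for each canonical generator $\g_i$ of $\G$, and let $l_i$ denote the order of $g_i$ in the finite group $G$. Since $\g_i^{l_i} = \pi(g_i^{l_i}) = 1$ in $\G$, it follows that $\m_i \mid l_i$, so there is a well-defined group homomorphism $\sigma\colon \Z_{l_1}\times\cdots\times\Z_{l_n} \to G$ sending the $i$-th standard generator to $g_i$.

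Because the $\g_i$ generate $\G$, the composition $\pi\circ\sigma\colon \Z_{l_1}\times\cdots\times\Z_{l_n} \to \G$ is a group epimorphism of the precise form to which Lemma \ref{sc} applies (it sends standard generators to the $\g_i$). If $\pi^{*}(\Phi_{\underline{\mathbf{a}}})$ were a coboundary, say $\pi^{*}(\Phi_{\underline{\mathbf{a}}}) = \partial J$ for some $2$-cochain $J$ on $G$, then functoriality of pullback together with the fact that pullback commutes with $\partial$ would yield
$$(\pi\circ\sigma)^{*}(\Phi_{\underline{\mathbf{a}}}) \;=\; \sigma^{*}\bigl(\pi^{*}(\Phi_{\underline{\mathbf{a}}})\bigr) \;=\; \sigma^{*}(\partial J) \;=\; \partial\bigl(\sigma^{*} J\bigr),$$
so $(\pi\circ\sigma)^{*}(\Phi_{\underline{\mathbf{a}}})$ would also be a coboundary, directly contradicting Lemma \ref{sc}.

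The only point that needs attention is the divisibility $\m_i \mid l_i$, which guarantees that $\sigma$ is well-defined; the rest is formal. Note that $\sigma$ need not be injective and its image need not exhaust $G$, but neither property is required: the argument only uses that $\pi\circ\sigma$ is surjective, which follows automatically from the fact that the $\g_i$ generate $\G$. This is what makes the reduction go through even when $G$ has an arbitrary invariant factor decomposition unrelated to that of $\G$, and why no further combinatorial computation beyond what was done in Lemma \ref{sc} is needed.
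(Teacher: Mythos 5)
Your proof is correct and takes essentially the same route as the paper: choose preimages $g_i$ of the generators $\g_i$, factor through the auxiliary product of cyclic groups $\Z_{l_1}\times\cdots\times\Z_{l_n}$, and invoke Lemma \ref{sc} together with the fact that pullback commutes with $\partial$. One small slip: the well-definedness of $\sigma$ is guaranteed by $G$ being abelian and $l_i$ being the exact order of $g_i$ (not by the divisibility $\m_i \mid l_i$, which is rather an automatic consequence and is what ensures that Lemma \ref{sc}'s hypothesis on the $l_i$ makes sense); this does not affect the validity of the argument.
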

\begin{proof} On the contrary, assume that $\pi^{*}(\Phi_{\underline{\mathbf{a}}})$ is a coboundary on $G$. Let $g_i$ be a preimage of $\g_i$ for $1\le i\le n$. Let $G_1$ be the subgroup generated by $g_1,\ldots, g_n$ and so we have a group embedding $\iota\colon G_1\to G$. Assume that $\ord(g_i)=l_i$, then clearly we have the following group epimorphism
$$\pi'\colon \Z_{l_1}\times\cdots \times \Z_{l_n}=\langle h_1\rangle\times \cdots\times \langle h_n\rangle \twoheadrightarrow G_1,\;\;\;\;\;h_i\mapsto g_i.$$
Define $f_1:=\iota\circ \pi'\colon \Z_{l_1}\times\cdots \times \Z_{l_n}\to G$ and $f:=\pi\circ f_1$. Note that by definition, the map $f$ is given by
$$\Z_{l_1}\times\cdots \times \Z_{l_n}\to \G,\;\;\;\;\;g_i\mapsto \g_i,\;\;(1\le i\le n).$$
If $\pi^{*}(\Phi_{\underline{\mathbf{a}}})$ is a coboundary on $G$, then $f_1^{*}(\pi^{*}(\Phi_{\underline{\mathbf{a}}}))=f^{*}(\Phi_{\underline{\mathbf{a}}})$ is still a coboundary. But this is absurd by Lemma \ref{sc}.

\end{proof}
\section{Nichols algebras of diagonal type in $_\mathbbm{G}^\mathbbm{G}\mathcal{YD}^\Phi.$}

 The aim of this section is to give a classification of the Nichols algebras of diagonal type with arithmetic root system in $_\mathbbm{G}^\mathbbm{G}\mathcal{YD}^\Phi.$ The idea to realize our purpose consists of five steps. Firstly, we can assume that the support group of $\B(V)$ is $\mathbbm{G},$ and from this assumption we can prove that $\Phi$ must be an abelian $3$-cocycle over $\G$. Secondly, we will develop a technique to change the base group from $\G$ to a bigger one $G$ together with a group epimorphism $\pi: G\to \G.$ Thirdly, we will show that any Nichols algebra $\B(V)$ in $_\mathbbm{G}^\mathbbm{G}\mathcal{YD}^\Phi$ is isomorphic to a Nichols algebra in $_G^G\mathcal{YD}^{\pi^*(\Phi)},$ which is thus twist equivalent to a usual Nichols algebra by Proposition \ref{p3.10}. Fourthly, we want to get a return ticket, that is, we will give a sufficient and necessary condition to determine when a Nichols algebra in $_\mathbbm{G}^\mathbbm{G}\mathcal{YD}^{\pi^*(\Phi)}$ is isomorphic to one in $_\mathbbm{G}^\mathbbm{G}\mathcal{YD}^\Phi$. Finally, combining these results and Heckenberger's classification of arithmetic root systems, we obtain the classification of Nichols algebras of diagonal type with arithmetic root system in $_\mathbbm{G}^\mathbbm{G}\mathcal{YD}^\Phi.$

\subsection{Start points} We give two conclusions as our preparations for classification.
At first, we will prove that any Nichols algebras of diagonal type can be realized in $_\mathbbm{G}^\mathbbm{G} \mathcal{YD}^\Phi$, where $\mathbbm{G}$ is an abelian group and $\Phi$ is an abelian $3$-cocycle over $\mathbbm{G}.$

\begin{lemma}\label{P4.7}
Let $\B(V)$ be a Nichols algebra of diagonal type in $_\mathbbm{G}^\mathbbm{G} \mathcal{YD}^\Phi,$ and $\mathbbm{G}'$ be the support group of $\B(V).$ Let $\Psi=\Phi|_{\mathbbm{G}'}$, then $\mathbbm{G}'$ is an abelian group and $\Psi$ is an abelian $3$-cocycle over $\mathbbm{G}'$.
\end{lemma}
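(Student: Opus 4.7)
My plan is first to reduce the lemma to a symmetry property of $\widetilde{\Psi}_g$, and then to use the diagonality of $V$ to trivialize enough $\widetilde{\Phi}_g$'s on $\mathbbm{G}'$.

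The abelianness of $\mathbbm{G}'$ is automatic because the whole subsection works under the standing hypothesis that the base group $\mathbbm{G}$ is abelian, so only the second claim needs work. For the main claim I will use the characterization of abelian $3$-cocycles via commutativity of $D^{\Psi}(\mathbbm{G}')$ from Subsection 3.3. Because $\mathbbm{G}'$ is abelian, the conjugations $x^{-1}gx$ and $(xy)^{-1}g(xy)$ in the defining formulas of $D^{\Psi}(\mathbbm{G}')$ collapse to $g$, and the twisting $\theta_g(x,y)$ simplifies to $\widetilde{\Psi}_g(x,y)$. Consequently $D^{\Psi}(\mathbbm{G}')$ is commutative --- i.e.\ $\Psi$ is abelian --- iff $\widetilde{\Psi}_g(x,y)=\widetilde{\Psi}_g(y,x)$ for all $g,x,y\in \mathbbm{G}'$. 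My goal reduces to establishing this symmetry.

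To obtain it, fix a canonical basis $\{X_1,\dots,X_n\}$ of $V$ with $\deg X_i=g_i$, so that $\mathbbm{G}'=\langle g_1,\dots,g_n\rangle$. Because $V$ is diagonal, each $X_i$ spans a $1$-dimensional projective $\mathbbm{G}$-representation with $2$-cocycle $\widetilde{\Phi}_{g_i}$: the relation $h\triangleright X_i=\rho_i(h)X_i$ produces a map $\rho_i\colon \mathbbm{G}\to \k^{\ast}$ satisfying $\rho_i(x)\rho_i(y)=\widetilde{\Phi}_{g_i}(x,y)\rho_i(xy)$, which exhibits $\widetilde{\Phi}_{g_i}|_{\mathbbm{G}'}$ as a $2$-coboundary on $\mathbbm{G}'$. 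I then propagate this from the generators $g_i$ to arbitrary $g\in\mathbbm{G}'$ by passing to the tensor algebra $T_\Phi(V)\in {_\mathbbm{G}^\mathbbm{G}\mathcal{YD}^\Phi}$. Iterating the tensor-product action formula $x\triangleright(m_g\otimes n_h)=\widetilde{\Phi}_x(g,h)\,x\triangleright m_g\otimes x\triangleright n_h$, I see that every pure tensor $X_{i_1}\otimes \cdots\otimes X_{i_m}$ again spans a $1$-dimensional projective $\mathbbm{G}$-representation whose associated $2$-cocycle is $\widetilde{\Phi}_{g_{i_1}\cdots g_{i_m}}$; hence the latter is again a $2$-coboundary on $\mathbbm{G}'$. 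Since $\mathbbm{G}'$ is a finite abelian group generated by $g_1,\dots,g_n$, every $g\in\mathbbm{G}'$ has the form $g_{i_1}\cdots g_{i_m}$, and so $\widetilde{\Psi}_g$ is a $2$-coboundary on $\mathbbm{G}'$ for every $g$.

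To conclude, I use that over an abelian group every $2$-coboundary is symmetric: if $\omega=d\rho$ then $\omega(x,y)=\rho(x)\rho(y)/\rho(xy)=\omega(y,x)$. Applied to $\omega=\widetilde{\Psi}_g$ this gives precisely the symmetry required by the first step, and the lemma follows. The only subtle point lies in the propagation argument: the claim that the iterated scalar action on $X_{i_1}\otimes\cdots\otimes X_{i_m}$ really obeys the projective-representation relation with $2$-cocycle $\widetilde{\Phi}_{g_{i_1}\cdots g_{i_m}}$ is equivalent to the multiplicativity $[\widetilde{\Phi}_{gh}]=[\widetilde{\Phi}_g]+[\widetilde{\Phi}_h]$ in $H^2(\mathbbm{G}',\k^{\ast})$. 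This is an identity that follows from the $3$-cocycle condition on $\Phi$, and is in fact already encoded in the coherence of the tensor category $_\mathbbm{G}^\mathbbm{G}\mathcal{YD}^\Phi$; it is precisely the ingredient that makes the whole argument go through.
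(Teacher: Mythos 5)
Your argument for the second claim (that $\Psi$ is abelian) is correct and takes a genuinely different, more conceptual route than the paper. You reduce the statement to symmetry of $\widetilde{\Psi}_g$ via the commutativity criterion for $D^{\Psi}(\mathbbm{G}')$, observe that diagonality makes $\widetilde{\Phi}_{g_i}$ a $2$-coboundary for each basis degree $g_i$, and then propagate this to all of $\mathbbm{G}'$ by noting that pure tensors in $T_\Phi(V)$ are again one-dimensional projective representations whose cocycle is $\widetilde{\Phi}_{g_{i_1}\cdots g_{i_m}}$ by the very definition of the tensor product in $_\mathbbm{G}^\mathbbm{G}\mathcal{YD}^\Phi$. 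The paper instead fixes a presentation $\mathbbm{G}'=\Z_{m_1}\times\cdots\times\Z_{m_n}$, invokes the explicit normal form $\Phi_{\underline{\mathbf{a}}}$ from Section 3 and the characterization $a_{rst}=0$ of abelian cocycles (Proposition 3.9), and then proves $a_{rst}=0$ by a direct computation of $\prod_i\Psi(h_i,g_s,g_t)^{k_i}$ in two ways. Your argument avoids the explicit cocycle formula entirely and is correspondingly cleaner; what the paper's computation buys is a formulation in precisely the coordinates ($\underline{\mathbf{a}}$, $\mathbf{S}$, $\mathbf{T}$) used throughout the rest of the paper. One cosmetic remark: the ``multiplicativity of $[\widetilde{\Phi}_g]$'' you invoke at the end is not an extra identity to check but is literally built into the definition of the $\mathbbm{G}$-action on $V^{\otimes m}$, so your parenthetical worry can be dropped.

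However, your handling of the first claim contains a gap. You dismiss the abelianness of $\mathbbm{G}'$ as automatic ``because the whole subsection works under the standing hypothesis that $\mathbbm{G}$ is abelian,'' but no such hypothesis is in force at this point. The conventions fixing $G$ abelian and $\Phi=\Phi_{\underline{\mathbf{a}}}$ are adopted only \emph{after} Lemma~\ref{P4.7}, and indeed the whole point of the lemma is to justify that reduction: the paper's proof devotes its first half to establishing $\mathbbm{G}'$ abelian, by taking any two degrees $h_i,h_j$ of canonical basis elements, forming the rank-two bosonization $\B(\k\{X_i,X_j\})\#\k\langle h_i,h_j\rangle$, and invoking the rank-two result from the authors' earlier work to conclude $h_ih_j=h_jh_i$. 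You would need to supply this step (or an equivalent argument) rather than assert it as given; without it, the subsequent reduction to symmetry of $\widetilde{\Psi}_g$ cannot even be formulated, since the simplification $\theta_g(x,y)=\widetilde{\Psi}_g(x,y)$ of the twisted double's multiplication uses commutativity of $\mathbbm{G}'$.
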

\begin{proof}
Firstly we will prove that $\mathbbm{G}'$ is an abelian group.

Suppose $V=\k \{X_1,\cdots ,X_m\}$ and $\delta_l(X_i)=h_i\otimes X_i, \;\forall 1\leq i\leq m.$ Since $\mathbbm{G}'$ is a support group of $\B(V)$,  $\mathbbm{G}'=\langle h_1,\cdots,h_m\rangle$. In order to prove that $\mathbbm{G}'$ is abelian, we need to prove $h_ih_j=h_jh_i, \forall 1\leq i\neq j\leq m$.
Let $U=\k\{X_i,X_j\}, H=\langle h_i,h_j\rangle,$ then $\B(U)$ is a Nichols algebra in $_{\k H}^{\k H} \mathcal{YD}^{\Psi|_H}.$ Therefore, $\B(U)\# \k H$ is a rank $2$ connected graded pointed Majid algebras. According to \cite[Lemma 2.7]{qha6}, $H$ is an abelian group, so $h_ih_j=h_jh_i.$ We have proved that $\mathbbm{G}'$ is an abelian group.

Next we will prove that $\Psi$ is an abelian $3$-cocycle over $\mathbbm{G}'.$ Assume without loss of generality that $\mathbbm{G}'=\mathbb{Z}_{m_1}\times\cdots\times \mathbb{Z}_{m_n} = \langle g_1 \rangle \times \cdots \langle g_n\rangle$, and $\Psi$ is of the form \eqref{3cocycle}.
According to Proposition \ref{p3.9}, we only need to prove that $a_{rst}=0,\forall 1\leq r<s<t\leq n.$

At first, fix a triple $(r,s,t)$ such that $1\leq r<s<t\leq n.$ Since $\G'=\langle h_1,\cdots,h_m\rangle$,  we have
$g_r=h_1^{k_{1}}\cdots h_m^{k_{m}}$, where $k_{1}<|h_1|,\cdots, k_{m}<|h_m|.$ Here $|g|$ means the order of $g.$
Conversely, $h_i,1\leq i\leq m$ can be presented by the generators of $\G'$, i.e. $h_i=g_1^{c_{i1}}\cdots \ g_n^{c_{in}}$, and we get
\begin{equation*}
\sum_{i=1}^m k_{i}c_{il} \equiv \begin{cases} 0 \; (\textrm{mod} \;m_l), & l\neq r; \\ 1 \; (\textrm{mod}\; m_r), & l=r.
\end{cases}
\end{equation*}
By \eqref{3cocycle}£¬
\begin{equation*}
\Psi(h_i,g_s,g_t)
=\prod_{1\leq j<s}\zeta_{(m_j,m_s,m_t)}^{c_{ij}a_{jst}},
\end{equation*}
so $$\prod_{i=1}^{m}\Psi(h_i,g_s,g_t)^{k_i}= \prod_{1\leq j<s}\zeta_{(m_j,m_s,m_t)}^{a_{jst}(\sum_{i=1}^nk_ic_{ij})}=\zeta_{(m_r,m_s,m_t)}^{a_{rst}}.$$

On the other hand, since $\k X_i,1\leq i\leq m$ are one dimensional $(\k \mathbbm{G}',\widetilde{\Psi}_{h_i})$-representations,  $\widetilde{\Psi}_{h_i}(g_s,g_t)=\widetilde{\Psi}_{h_i}(g_t,g_s)$. It follows by a direct computation that
$\widetilde{\Psi}_{h_i}(g_s,g_t)=1$ (since $s<t$), and thus $$\widetilde{\Psi}_{h_i}(g_t,g_s)=\widetilde{\Psi}_{h_i}(g_s,g_t)=1.$$

Hence
\begin{equation*}
\Psi(h_i,g_s,g_t)=\frac{\Psi(g_s,h_i,g_t)}{\Psi(g_s,g_t,h_i)}
=\prod_{s<p<t}\zeta_{(m_s,m_p,m_t)}^{c_{ip}a_{spt}}\bigg[\prod_{t<q\leq n}\zeta_{(m_s,m_t,m_q)}^{c_{iq}a_{stq}}\bigg]^{-1}.
\end{equation*}
Then we get
\begin{equation*}
\begin{split}
\prod_{i=1}^{m}\Psi(h_i,g_s,g_t)^{k_i}=&\prod_{i=1}^m\bigg
\{\prod_{s<p<t}\zeta_{(m_s,m_p,m_t)}^{c_{ip}a_{spt}}
\bigg[\prod_{t<q\leq n}
\zeta_{(m_s,m_t,m_q)}^{c_{iq}a_{stq}}\bigg]^{-1}\bigg\}^{k_i}\\
=&\prod_{s<p<t}\zeta_{(m_s,m_p,m_t)}^{a_{spt}(\sum_{i=1}^m k_ic_{ip})}\bigg[\prod_{t<q\leq n}\zeta_{(m_s,m_t,m_q)}^{a_{stq}(\sum_{i=1}^m k_ic_{iq})}\bigg]^{-1}\\
=&1.
\end{split}
\end{equation*}
So we obtain $\zeta_{(m_r,m_s,m_t)}^{a_{rst}}=1$ and this implies $a_{rst}=0$ since $0\leq a_{rst}<(m_r,m_s,m_t)$.
\end{proof}

For our purpose of classification of Nichols algebras $\B(V)$ of diagonal type in ${_G^G \mathcal{YD}^\Phi}$, it is harmless to \emph{assume in the rest of the paper that the support group of $\B(V)$ is $G$ and thus $G$ is abelian and $\Phi=\Phi_{\underline{\textbf{a}}}$ is an abelian cocycle}.

Secondly, we will show that there is a nice grading on $\B(V)\in {_G^G \mathcal{YD}^\Phi}$. Let $V\in {_G^G \mathcal{YD}^\Phi}$ be a Yetter-Drinfeld module of diagonal type and $\{X_i|1\leq i\leq l\}$ a canonical basis of $V.$ Let $\Z^l$ be the free abelian group of rank $l$ and assume that $e_i\; (1\leq i\leq l)$ is the canonical generators of $\Z^l$. The following fact is very important for our follow-up discussions, which is indeed \cite[Lemma 4.2]{qha6}. We include a proof here for completeness and safety.

\begin{proposition}\label{p4.1}
There is a $\Z^{l}$-grading on the Nichols algebra $\B(V)\in {_G^G \mathcal{YD}^\Phi}$ by setting $\deg X_i=e_i$.
\end{proposition}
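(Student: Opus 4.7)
The plan is to equip $T_{\Phi}(V)$ with an $\mathbb{N}^l$-grading refining its $\mathbb{N}$-grading by total length, by setting $\deg X_i = e_i$ on generators and extending multiplicatively, and then to descend this multigrading to the quotient $\B(V) = T_\Phi(V)/I$. Since the $X_i$'s form a canonical basis of $V$, every monomial $X_{i_1}X_{i_2}\cdots X_{i_n}$ inherits the well-defined multidegree $e_{i_1} + \cdots + e_{i_n} \in \mathbb{Z}^l$, and this gives $T_\Phi(V)$ an $\mathbb{N}^l$-grading as a vector space; multiplication preserves it tautologically.

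To see that $T_\Phi(V)$ is even $\mathbb{Z}^l$-graded as a braided Hopf algebra, I need to check the coproduct. On generators, $\Delta(X_i) = X_i \otimes 1 + 1 \otimes X_i$ is $\mathbb{Z}^l$-homogeneous of degree $e_i$; extending as a morphism of braided algebras, I must verify that the braided product on $T_\Phi(V) \otimes T_\Phi(V)$ preserves the tensor $\mathbb{Z}^l$-grading. This holds because the braiding is diagonal, $c(X_i \otimes X_j) = q_{ij}(X_j \otimes X_i)$, hence is manifestly $\mathbb{Z}^l \times \mathbb{Z}^l$-bigraded, and because the associator of ${}^G_G\mathcal{YD}^\Phi$ only contributes scalars depending on $G$-degrees, which do not alter $\mathbb{Z}^l$-multidegrees.

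For the descent, I will show that the defining ideal $I$ with $\B(V) = T_\Phi(V)/I$ is $\mathbb{Z}^l$-homogeneous. Using the equivalent characterization $\B(V)_n \cong V^{\otimes \vec n}/\ker \mathfrak{S}_n$ via the braided (quantum) symmetrizer $\mathfrak{S}_n$, one observes that $\mathfrak{S}_n$ is assembled from iterated braidings on $V^{\otimes \vec n}$ with at most $G$-dependent scalar associator corrections; consequently $\mathfrak{S}_n$ is a $\mathbb{Z}^l$-graded operator, so $\ker \mathfrak{S}_n$ is $\mathbb{Z}^l$-homogeneous. Summing over $n$, the ideal $I$ is $\mathbb{Z}^l$-graded, and the quotient $\B(V)$ acquires the desired $\mathbb{Z}^l$-grading with $\deg X_i = e_i$.

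The main subtlety is handling the nonassociativity of $T_\Phi(V)$ arising from a nontrivial associator $\Phi$; but since $\Phi(a,b,c)$ on $G$-homogeneous elements $a,b,c$ is a nonzero scalar in $\k$, it cannot shift multidegrees, and the standard Nichols algebra argument in the Hopf case adapts verbatim. An alternative packaging would be to pull $V$ back to a Yetter-Drinfeld module over the extended group $G \times \mathbb{Z}^l$ (with trivially extended cocycle) by assigning $X_i$ the degree $(h_i, e_i)$; the built-in $(G \times \mathbb{Z}^l)$-grading on the resulting Nichols algebra then yields the required $\mathbb{Z}^l$-grading after projecting back to $G$.
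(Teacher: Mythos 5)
Your proof is correct in substance, but it travels a genuinely different road from the paper's. The paper proves that the defining ideal $I$ is $\Z^l$-homogeneous by induction on the $\N$-degree, leaning on the maximality of $I$: at each $\N$-degree one decomposes an element of $I$ into $\Z^l$-homogeneous pieces, observes (using the inductive hypothesis that the lower part of $I$ is $\Z^l$-graded) that each piece has reduced coproduct landing in $T_\Phi(V)\otimes I + I\otimes T_\Phi(V)$, and then concludes each piece must already lie in $I$ since otherwise adjoining it would violate maximality. Your argument instead invokes the quantum symmetrizer characterization $\B(V)_n\cong V^{\otimes\vec n}/\ker\mathfrak{S}_n$ and notes that $\mathfrak{S}_n$ is built from braidings and associator scalars that only permute tensor slots and multiply by constants, hence preserve $\Z^l$-multidegree. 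Both work, and both hinge on the same observation that the braiding and associator cannot shift multidegree. The paper's route has the advantage of being self-contained relative to its own definition of $\B(V)$ as $T_\Phi(V)$ modulo the maximal graded Hopf ideal; yours is more direct and even produces the homogeneous components explicitly, but silently imports the symmetrizer characterization for Nichols algebras in the quasi (nonassociative) setting, a fact the paper neither states nor uses. That characterization is standard and does adapt, but in a fully careful write-up you would want to either cite it or indicate how the usual proof survives the associator corrections. Your alternative packaging via $G\times\Z^l$ is attractive but risks circularity as stated: the change-of-base-group machinery used to identify $\B(V)$ with the Nichols algebra over the extended group (Lemma 4.5 in the paper) itself invokes the very proposition you are proving.
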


\begin{proof}
Obviously, there is a $\Z^{l}$-grading on the tensor algebra $T_{\Phi}(V)\in {_G^G \mathcal{YD}^\Phi}$ by assigning $\deg X_i=e_i$. Let $I=\oplus_{i\geq i_0} I_i$ be the maximal graded Hopf ideal generated by homogeneous elements of degree greater than or equal to $2.$ To prove that $\B(V)$ is $\Z^l$-graded, it amounts to prove that $I$ is $\Z^l$-graded. This will be done by induction on the $\N$-degree.

First let $X\in I$ be a homogenous element with minimal degree $i_0.$ Since $\Delta(X)\in T_{\Phi}(V)\otimes I+ I\otimes T_{\Phi}(V),$ $X$ must be a primitive element, i.e., $\Delta(X)=X\otimes 1+1\otimes X.$
Suppose $X=X^1+X^2+\cdots +X^n,$ where $X^i$ is $\Z^l$-homogenous, and $X^i$ and $X^j$ have different $\Z^l$-degrees if $i\neq j.$ Suppose $\Delta(X^i)=X^i\otimes 1+1\otimes X^i+(X^i)_1\otimes (X^i)_2.$ Then we have  $\sum_{i=1}^n(X^i)_1\otimes(X^i)_2=0$. This forces $(X^i)_1\otimes (X^i)_2 = 0$ for each $1\leq i\leq l$ since $\Delta$ preserves the $\Z^l$-degrees. So, each $X^i$ is a primitive element and hence must be contained in $I$ by the maximality of $I.$ Therefore, $I_{i_0}$ is $\Z^l$-graded.

Then suppose that $I^k:=\oplus_{i_0\leq i\leq k} I_i$ is $\Z^l$-graded. We shall prove that $I^{k+1}=\oplus_{i_0\leq i\leq k+1} I_i$ is also $\Z^l$-graded. Let $X\in I_{k+1}$ and $X=X^1+X^2+\cdots +X^n,$ with each $X^i$ being $\Z^l$-homogenous and $X^i$ and $X^j$ having different $\Z^l$-degrees if $i\neq j.$ Write $\Delta(X^i)=X^i\otimes 1+1\otimes X^i+(X^i)_1\otimes (X^i)_2.$ Since $\Delta(X)=X\otimes 1+1\otimes X+(X)_1\otimes (X)_2,$ where $(X)_1\otimes (X)_2\in T_{\Phi}(V)\otimes I^k+I^k\otimes T_{\Phi}(V),$ i.e., $\sum (X^i)_1\otimes (X^i)_2\in T_{\Phi}(V)\otimes I^k+I^k\otimes T_{\Phi}(V).$ According to the inductive assumption, $T_{\Phi}(V)\otimes I^k+I^k\otimes T_{\Phi}(V)$ is a $\Z^l$-graded space. So each $(X^i)_1\otimes (X^i)_2\in T_{\Phi}(V)\otimes I^k+I^k\otimes T_{\Phi}(V)$ as $\Delta$ preserves $\Z^l$-degrees. If there was an $X^i\notin I_{k+1},$ then $I+\langle X^i\rangle$ is a Hopf ideal properly containing $I,$ which contradicts to the maximality of $I.$ It follows that $X^i\in I_{k+1}$ for all $1\leq i\leq n$ and hence $I^{k+1}$ is also $\Z^l$-graded by the assumption on $X.$ We complete the proof of the proposition.
\end{proof}

\subsection{Change of base groups}

Since Nichols algebras in twisted Yetter-Drinfeld categories $_\G^\G\mathcal{YD}^\Phi$ are nonassociative algebras, the structures of these algebras depend on $\G$ and $3$-cocycle $\Phi$ on $\G.$ We will call $\G$ the base group of $\B(V).$  One of the most important methods of this paper is to change the base groups of Nichols algebras. We need the following definition.

\begin{definition}
Let $\B(V)$ and $\B(U)$ be Nichols algebras in $_G^G\mathcal{YD}^\Phi$  and $_H^H\mathcal{YD}^\Psi$ respectively with $\dim V=\dim U=l.$ We say $\B(V)$ is isomorphic to $\B(U)$ if there is a $\Z^l$-graded linear isomorphism $\mathcal{F}:\B(V)\to \B(U)$ which preserves the multiplication and comultiplication.
\end{definition}

\begin{lemma}\label{p4.2}
Suppose $V\in {_G^G\mathcal{YD}^\Phi}$ and $U\in { _H^H\mathcal{YD}^\Psi}.$  Let $G'$ and $H'$ be support groups of $V$ and $U$ respectively. If there are a linear isomorphism $F:V\to U$ and a group epimorphism $f:G'\to H'$ such that:
\begin{eqnarray}
&\delta\circ F=(f\otimes F)\circ \delta,\label{eq4.1} \\
& F(g\triangleright v)=f(g)\triangleright F(v),\label{eq4.2}\\
& \Phi|_{G'}=f^*\Psi|_{H'}
\end{eqnarray}
for any $g\in G',$ $v\in V.$ Then $\B(V)$ is isomorphic to $\B(U).$
\end{lemma}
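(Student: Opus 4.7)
The plan is to lift $F$ to an isomorphism of tensor algebras, show it is a morphism of braided Hopf algebras, and then descend to Nichols algebras by the universal property.

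First, I would extend $F$ to a linear map $\widetilde{F}\colon T_{\Phi|_{G'}}(V)\to T_{\Psi|_{H'}}(U)$ by setting
\[
\widetilde{F}(v_{1}\otimes\cdots\otimes v_{n})=F(v_{1})\otimes\cdots\otimes F(v_{n})
\]
(with the same left-parenthesization on both sides), which is clearly a $\Z^{l}$-graded linear isomorphism. Since all tensor powers $V^{\otimes \vec{n}}$ are supported in $G'$ by Proposition \ref{p4.1} and the definition of the support group, and likewise for $U$, it suffices to work in the subcategories $_{G'}^{G'}\mathcal{YD}^{\Phi|_{G'}}$ and $_{H'}^{H'}\mathcal{YD}^{\Psi|_{H'}}$ when verifying the compatibility of $\widetilde{F}$ with the braided Hopf algebra structures.

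Next, I would verify that $\widetilde{F}$ is a morphism in the appropriate sense: using \eqref{eq4.1} and the coproduct formula $\delta_{L}(m_{g}\otimes n_{h})=gh\otimes m_{g}\otimes n_{h}$, it is immediate that $\delta_{L}\circ\widetilde{F}=(f\otimes\widetilde{F})\circ\delta_{L}$. For the action, the hypothesis $\Phi|_{G'}=f^{*}\Psi|_{H'}$ gives the crucial identity $\widetilde{\Phi}_{g}(e,e')=\widetilde{\Psi}_{f(g)}(f(e),f(e'))$, and combined with \eqref{eq4.2} this implies $\widetilde{F}(x\triangleright\xi)=f(x)\triangleright\widetilde{F}(\xi)$ for every $\xi\in V^{\otimes\vec{n}}$. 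The same cocycle identity shows that $\widetilde{F}$ intertwines the associativity constraints, and the braiding formula $c(v_{e}\otimes w_{f'})=(e\triangleright w_{f'})\otimes v_{e}$ shows that it intertwines the braidings.

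Third, I would check that $\widetilde{F}$ preserves the braided Hopf algebra structure. The multiplication of the tensor algebra is concatenation, and since $\widetilde{F}$ intertwines the associativity constraints, it commutes with concatenation modulo reassociation. The coproduct is determined (by the braided bialgebra axiom) from $\Delta(v)=v\otimes 1+1\otimes v$ on generators together with the braiding $c$; since $\widetilde{F}$ is the identity on generators under the identification via $F$ and intertwines $c$, it preserves the coproduct. The antipode $S(X)=-X$ on generators is visibly preserved.

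Finally, $\B(V)=T_{\Phi|_{G'}}(V)/I$ where $I$ is the unique maximal graded Hopf ideal generated in degree $\ge 2$, and similarly $\B(U)=T_{\Psi|_{H'}}(U)/I'$. Since $\widetilde{F}$ is a bijective graded braided Hopf algebra morphism, $\widetilde{F}(I)$ is a graded Hopf ideal of $T_{\Psi|_{H'}}(U)$ generated in degree $\ge 2$, and its maximality transfers across $\widetilde{F}$; hence $\widetilde{F}(I)=I'$. Passing to the quotient yields the desired $\Z^{l}$-graded isomorphism $\mathcal{F}\colon\B(V)\to\B(U)$ preserving both multiplication and comultiplication. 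The main subtlety is the bookkeeping of the associators when verifying that $\widetilde{F}$ commutes with multiplication on non-canonically parenthesized tensors, but this is precisely handled by the hypothesis $\Phi|_{G'}=f^{*}\Psi|_{H'}$, which forces the reassociation scalars on the two sides to match.
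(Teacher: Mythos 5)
Your proof takes essentially the same route as the paper's: lift $F$ multiplicatively to the tensor algebras, check it intertwines comultiplication (the cocycle hypothesis $\Phi|_{G'}=f^*\Psi|_{H'}$ making the associator and braiding scalars match), and use the $\Z^l$-grading of the maximal Hopf ideal provided by Proposition~\ref{p4.1} to carry it across the isomorphism. You simply write out in detail the verifications that the paper labels ``easy'' and ``obvious''; the architecture and the key ingredients (extension to $T_\Phi(V)$, cocycle matching, $\Z^l$-graded maximal Hopf ideal) are identical.
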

 \begin{proof}
Let $\mathcal{F}: T_{\Phi|_{G'}}(V)\to T_{f^*(\Psi|_{H'})}(U)$ be the multiplicative linear map such that $\mathcal{F}|_V=F.$ It is easy to show that $F$ also preserves the comultiplication between $T_{\Phi|_{G'}}(V)$ and $T_{f^*(\Psi|_{H'})}(U).$ Let $\{X_i|1\leq i\leq l\}$ be a canonical basis of $V,$ then it is obviously that $\{Y_i=F(X_i)|1\leq i\leq l\}$ is a canonical basis of $U$ by (\eqref{eq4.1}-\eqref{eq4.2}). Let $\{e_i|1\leq i\leq l\}$ be the free generators of $\Z^l.$ Then $T_{\Phi|_{G'}}(V)$ and $ T_{f^*(\Psi|_{H'})}(U)$ are $\Z^l$-graded by setting $\deg(X_i)=\deg(Y_i)=e_i.$
Note that $\mathcal{F}$ induces a one to one correspondence between the set of $\Z^l$-graded Hopf ideals of $T_{\Phi|_{G'}}(V)$ and that of $T_{f^*(\Psi|_{H'})}(U).$ By Proposition \ref{p4.1}, we know that the maximal Hopf ideals generated by homogeneous elements of degree $\geq 2$ in $ T_{\Phi|_{G'}}(V)$ and in $T_{f^*(\Psi|_{H'})}(U)$ are $\Z^l$-graded. It is obvious that $\mathcal{F}$ maps the maximal Hopf ideal of $T_{\Phi|_{G'}}(V)$ to that of $T_{f^*(\Psi|_{H'})}(U).$ Therefore, $\mathcal{F}$ induces a linear isomorphism from $\B(V)$ to $\B(U)$ which preserves multiplication and comultiplication.
\end{proof}

The following definition is convenient for our later expositions.
\begin{definition}
If $(F,f)$ is an isomorphism from $\B(V)$ to $\B(U)$ as in Lemma \ref{p4.2},  then we say $\B(V)$ is isomorphic to $\B(U)$ \emph{through the group morphism $f.$}\end{definition}

Suppose $\mathbb{G}=\mathbb{Z}_{\mathbbm{m}_1}\times\cdots\times \mathbb{Z}_{\mathbbm{m}_n} = \langle \mathbbm{g}_1 \rangle \times \cdots \langle \mathbbm{g}_n \rangle $ and $G=\mathbb{Z}_{m_1}\times\cdots\times \mathbb{Z}_{m_n} = \langle g_1 \rangle \times \cdots \langle g_n\rangle $ where $m_i=\mathbbm{m}^2_i$ for $1\leq i\leq n.$ Let
\begin{equation}\label{e4.4}\pi:\;\k G\to \k\mathbbm{G},\;\;\;\;g_{i}\mapsto \mathbbm{g}_{i},\;\;\;\;\;1\le i\le n\end{equation}
be the canonical epimorphism. Observe that $\pi$ has a section
\begin{equation}\label{e4.5}
\iota:\;\k\mathbbm{G}\to \k G,\;\;\;\;\prod_{i=1}^{n}\mathbbm{g}^{i_j}_{i}\mapsto \prod_{i=1}^{n}{g}^{i_j}_{i}
\end{equation}
which is not a group morphism.
Let $\delta_L$ and $\triangleright$ be the comodule and module structure maps of $V\in {_\mathbbm{G}^\mathbbm{G} \mathcal{YD}^{\Phi}}$. Define
\begin{eqnarray*}
&&\rho_{L}:\;V \to \k G\otimes V,\;\;\;\;\rho_{L}=(\iota\otimes \id)\delta_{L}\\
&&\blacktriangleright:\;\k G\otimes V\to V,\;\;\;\;g\blacktriangleright Z=\pi(g)\triangleright Z
\end{eqnarray*}
for all $g\in G$ and $Z\in V$.
 \begin{lemma}\label{l4.4} Defined in this way, $(V, \rho_{L}, \blacktriangleright),$ denoted simply by $\widetilde{V}$ in the following, is an object in ${_G^G \mathcal{YD}^{\pi^*\Phi}}$.
 \end{lemma}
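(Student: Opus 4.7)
The plan is to verify in turn the three conditions required for $(V,\rho_L,\blacktriangleright)\in {_G^G\mathcal{YD}^{\pi^*\Phi}}$: that $\rho_L$ is a $\k G$-comodule structure, that $\blacktriangleright$ is a $\k G$-action in the projective sense, and that the comodule grading is preserved by the action so that the Yetter--Drinfeld compatibility of Definition \ref{d2.9} makes sense.

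First I would observe the key (easily missed) point: although $\iota$ is \emph{not} a group homomorphism, it \emph{is} a coalgebra homomorphism, because it sends each group-like $\mathbbm{g}_1^{i_1}\cdots\mathbbm{g}_n^{i_n}$ of $\k\mathbbm{G}$ to the group-like $g_1^{i_1}\cdots g_n^{i_n}$ of $\k G$. Consequently $\rho_L=(\iota\otimes\id)\delta_L$ is coassociative and counital; this gives the $G$-grading $V=\bigoplus_{g\in G}{^gV}$, in which only degrees in $\iota(\mathbbm{G})$ occur and, for $g=\iota(\mathbbm{g})$, the space $^gV$ (with respect to $\rho_L$) coincides with the old $^{\mathbbm{g}}V$ (with respect to $\delta_L$).

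Next I would check that $\blacktriangleright$ is a well-defined ``projective'' $\k G$-action on each homogeneous piece. Pick $Z\in {^gV}$, so $g=\iota(\pi(g))$ and $Z\in {^{\pi(g)}V}$ in the original grading. For $e,f\in G$, compute
\begin{align*}
e\blacktriangleright(f\blacktriangleright Z)
&=\pi(e)\triangleright(\pi(f)\triangleright Z)
=\widetilde{\Phi}_{\pi(g)}(\pi(e),\pi(f))\,(\pi(e)\pi(f))\triangleright Z\\
&=\widetilde{\Phi}_{\pi(g)}(\pi(e),\pi(f))\,\pi(ef)\triangleright Z
=\widetilde{\Phi}_{\pi(g)}(\pi(e),\pi(f))\,(ef)\blacktriangleright Z,
\end{align*}
using the YD condition in $_\mathbbm{G}^\mathbbm{G}\mathcal{YD}^{\Phi}$ and the fact that $\pi$ \emph{is} a group homomorphism. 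A direct unravelling of definitions gives $\widetilde{\pi^*\Phi}_g(e,f)=\widetilde{\Phi}_{\pi(g)}(\pi(e),\pi(f))$, so this is exactly the condition in Definition \ref{d2.9} for $\widetilde{V}$ relative to the $3$-cocycle $\pi^*\Phi$.

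Finally I would check that the action $\blacktriangleright$ preserves the $\rho_L$-grading, i.e.\ sends $^gV$ to itself. This follows from the abelianness of $\mathbbm{G}$ together with the fact that, in the original YD structure, each $^{\mathbbm{g}}V$ is stable under $\triangleright$ (by Definition \ref{d2.9}); applying $\iota$ to the $\mathbbm{G}$-degree then yields $g$ back. I do not expect any real obstacle here; the only slightly delicate point is the one already flagged, namely recognizing that $\iota$, while not multiplicative, is nevertheless a coalgebra map because group-likes go to group-likes, which is what makes $\rho_L$ a legitimate comodule structure despite the non-homomorphism nature of the section.
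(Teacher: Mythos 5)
Your proof is correct and follows essentially the same approach as the paper's: the paper's proof consists precisely of the computation you give in your second step, verifying the projective-action condition via $\widetilde{\pi^*\Phi}_{\iota(z)}(e,f)=\widetilde{\Phi}_{z}(\pi(e),\pi(f))$. Your additional observations --- that $\iota$, though not a group homomorphism, is a coalgebra map sending group-likes to group-likes (so $\rho_L$ is a legitimate comodule structure), and that the action preserves the grading --- are correct explicit versions of verifications the paper treats as routine.
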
 \begin{proof}
 This can be verified by direct computation:
\begin{eqnarray*}
e\blacktriangleright(f\blacktriangleright Z)&=&\pi(e)\triangleright(\pi(f)\triangleright Z)\\
&=&\tilde{\Phi}_{z}(\pi(e),\pi(f))(\pi(e)\pi(f))\triangleright Z\\
&=&\widetilde{\pi^{\ast}(\Phi)}_{\iota(z)}(e,f)ef\blacktriangleright Z
\end{eqnarray*}
for all $e,f\in G$ and $\delta_{L}(Z)=z\otimes Z$ for $Z\in V$.
\end{proof}
\begin{proposition}\label{P4.5}
For any Nichols algebra $\B(V)\in{ _\mathbbm{G}^\mathbbm{G} \mathcal{YD}^\Phi},$ the Nichols algebra $\B(\widetilde{V})\in {_G^G \mathcal{YD}^{\pi^*(\Phi)}}$ is isomorphic to $\B(V).$ Moreover, $\B(\widetilde{V})$ is twist equivalent to a usual Nichols algebra.
\end{proposition}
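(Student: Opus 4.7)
The plan is to deduce the isomorphism $\B(\widetilde{V})\cong \B(V)$ from the general criterion in Lemma \ref{p4.2}, and then to obtain the twist equivalence from the explicit resolution of $\pi^{*}(\Phi)$ in Proposition \ref{p3.10}. Both halves are essentially formal once those two ingredients are in place.

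For the first assertion I would take the linear map $F=\id_V\colon \widetilde V\to V$ together with the restriction $f=\pi|_{G'}$ of the canonical epimorphism to the support group $G'=\langle\iota(h_1),\dots,\iota(h_m)\rangle\subseteq G$ of $\widetilde V$, where $h_1,\dots,h_m$ are the $\mathbbm{G}$-degrees of a canonical basis of $V$. Since $\pi\circ\iota=\id_{\mathbbm{G}}$, the restriction $\pi|_{G'}$ surjects onto the support group $\mathbbm{G}'$ of $V$ (which equals $\mathbbm{G}$ under our standing assumption). The three conditions of Lemma \ref{p4.2} are then immediate: if $\delta_{L}^{V}(Z)=z\otimes Z$, then by the very definition of $\rho_{L}=(\iota\otimes\id)\delta_{L}$ we have
\[
(\pi\otimes F)\rho_{L}(Z)=(\pi\otimes\id)(\iota(z)\otimes Z)=z\otimes Z=\delta_{L}^{V}(F(Z));
\]
the action compatibility $F(g\blacktriangleright Z)=\pi(g)\triangleright F(Z)$ is the definition of $\blacktriangleright$; and the cocycle identity $\pi^{*}(\Phi)|_{G'}=(\pi|_{G'})^{*}(\Phi|_{\mathbbm{G}'})$ is tautological. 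Lemma \ref{p4.2} therefore produces the desired $\Z^{l}$-graded isomorphism of Hopf algebras $\B(\widetilde V)\cong \B(V)$.

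For the second assertion I would invoke Proposition \ref{p3.10}: writing $\Phi=\Phi_{\underline{\mathbf{a}}}$ (legitimate by Lemma \ref{P4.7}, which forces $\Phi$ to be abelian), the $2$-cochain $J=J_{\underline{\mathbf{a}}}$ on $G$ defined in \eqref{2cochain} satisfies $\partial(J)=\pi^{*}(\Phi)$. Hence $\widetilde V\in{}_{G}^{G}\mathcal{YD}^{\partial J}$, and the twisting construction of Subsection 2.4 applied with the $2$-cochain $J^{-1}$ produces an object
\[
\widetilde V^{\,J^{-1}}\in{}_{G}^{G}\mathcal{YD}^{\partial J\,\ast\,\partial(J^{-1})}={}_{G}^{G}\mathcal{YD},
\]
i.e. a genuine (untwisted) Yetter--Drinfeld module over $G$. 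Lemma \ref{L2.12} then gives $\B(\widetilde V)^{\,J^{-1}}\cong \B(\widetilde V^{\,J^{-1}})$, exhibiting $\B(\widetilde V)$ as twist equivalent to a usual Nichols algebra.

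The only nontrivial ingredient in the argument is the resolution $\pi^{*}(\Phi)=\partial(J_{\underline{\mathbf{a}}})$, which is already established in Proposition \ref{p3.10}; the rest is a matter of unwinding the definitions of $\widetilde V$, $\rho_{L}$ and $\blacktriangleright$ and checking compatibility, so I do not expect any real obstacle here. The conceptual gain is that the twisted Nichols algebra $\B(V)$ over the small group $\mathbbm{G}$ is replaced, up to isomorphism and then up to twist equivalence, by a usual Nichols algebra over the larger group $G$, which is the platform on which Heckenberger's classification can be brought to bear in the next subsection.
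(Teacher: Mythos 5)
Your proof is correct and follows essentially the same route as the paper: the first claim is verified by instantiating Lemma~\ref{p4.2} with $F=\id$ and $f=\pi$ restricted to the support group, and the second by invoking the explicit coboundary $J_{\underline{\mathbf{a}}}$ from Proposition~\ref{p3.10} together with Lemma~\ref{L2.12}. The only difference is that you spell out the verification of the three conditions of Lemma~\ref{p4.2} and the twist by $J^{-1}$ explicitly, whereas the paper leaves these as immediate.
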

\begin{proof}
The first statement is a direct consequence of Lemma \ref{p4.2}. For the second, just note that $\pi^*(\Phi)$ is a 3-coboundary on $G$ by Proposition \ref{p3.10}.
\end{proof}

To summarize so far, we have found the following route of transforming a \emph{nonassociative} Nichols algebra to a \emph{usual} one:
\begin{figure}[hbt]
\begin{picture}(100,100)(0,0)
\put(-5,90){\makebox(0,0){$\B(V)\in {^{\G}_{\G}\mathcal{YD}^\Phi}\;\;\;$ \textsf{Original Nichols algebra}}}
\put(-50,85){\vector(0,-1){25}}
\put(25,50){\makebox(0,0){$ \B(V)\cong \B(\widetilde{V})\in {^{G}_{G}\mathcal{YD}^{\pi^{\ast}(\Phi)}}\;\;\;$ \textsf{Lemma \ref{p4.2}+Lemma \ref{l4.4}}}}
\put(-50,45){\vector(0,-1){30}}\put(0,30){\makebox(0,0){\textsf{Proposition \ref{p3.10}}}}
\put(35,10){\makebox(0,0){$ \B(\widetilde{V})\;$ \textsf{is twisted equivalent to a usual Nichols  algebra}\;$\B(V)'$}}
\put(20,-5){\makebox(0,0){ \textsf{Figure I}}}
\end{picture}
\end{figure}

Since we only want to classify finite-dimensional Majid algebras, there is no harm to assume that all the usual Nichols algebras appearing in this paper have arithmetic root systems.  According to this diagram, each diagonal Nichols algebra $\B(V)\in {^{\G}_{\G}\mathcal{YD}^\Phi}$ is corresponding to a usual diagonal Nichols algebra, denoted by $\B(V)'$ for convenience, in a canonical way. Note that there is a $\Z^{l}$-graded linear isomorphism $\B(V)\cong \B(V)'$. Thus, it is reasonable to make the following definition.

\begin{definition}\label{d4.8} The arithmetic root system of $\B(V)$ is defined to be that of $\B(V)'.$ That is, $\triangle(\B(V))_{\chi, E} := \triangle(\B(V'))_{\chi, E}$ by the prescribed notations in Subsection \ref{subsec2.5}. In particular, the root system $\triangle(\B(V))$ of $\B(V)$ equals to $\triangle(\B(V)')$.
\end{definition}

The aim of this section is to classify the Nichols algebras of diagonal type with arithmetic root system in $_\mathbbm{G}^\mathbbm{G}\mathcal{YD}^\Phi.$ Using this diagram, we just need to know the following: For a usual diagonal type Nichols algebra $\B$ with arithmetic root system, when $\B$ is gotten from a Nichols algebra $\B(V)\in {_\mathbbm{G}^\mathbbm{G}\mathcal{YD}^\Phi}$? That is, find a return trip of the above diagram.

\subsection{The return trip} Keep the notations of the previous subsection.
At first, we give the inverse version of Proposition \ref{P4.5}.
\begin{lemma}\label{P4.6}
Let $\B(\widetilde{V})\in {_G^G \mathcal{YD}^{\pi^*(\Phi)}}$ be a Nichols algebra of diagonal type and $\{Y_i|1\leq i\leq m\}$ be a canonical basis of $\widetilde{V}.$ Then $\B(\widetilde{V})$ is isomorphic to a Nichols algebra in $_\mathbbm{G}^\mathbbm{G} \mathcal{YD}^\Phi$ through $\pi$ if and only if \begin{equation}g_i^{\m_i}\blacktriangleright Y_j=Y_j,\;\;\;\; 1\leq i\leq n, 1\leq j\leq m.\end{equation}
\end{lemma}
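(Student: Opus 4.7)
My plan is to prove the two implications separately, with the only real work concentrated in the ``if'' direction.

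For the ``only if'' direction, assume the stated isomorphism from $\B(\widetilde V)$ to some $\B(V)\in{_\mathbbm{G}^\mathbbm{G}\mathcal{YD}^\Phi}$ is implemented by a pair $(F,\pi)$ in the sense of Lemma \ref{p4.2}. Then formula \eqref{eq4.2} gives $F(g_i^{\m_i}\blacktriangleright Y_j)=\pi(g_i^{\m_i})\triangleright F(Y_j)=1_\mathbbm{G}\triangleright F(Y_j)$. Since $V\in{_\mathbbm{G}^\mathbbm{G}\mathcal{YD}^\Phi}$ and the derived $2$-cocycle $\widetilde{\Phi}_{\pi(h_j)}$ is normalized, the identity of $\mathbbm{G}$ acts as the identity on $F(Y_j)$, and the injectivity of $F$ yields $g_i^{\m_i}\blacktriangleright Y_j=Y_j$.

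For the ``if'' direction, I will construct the desired $V\in{_\mathbbm{G}^\mathbbm{G}\mathcal{YD}^\Phi}$ on the same underlying vector space as $\widetilde V$ by pushing forward along $\pi$. The $\mathbbm{G}$-comodule structure is forced to be $\delta_L(Y_j):=\pi(h_j)\otimes Y_j$, where $\rho_L(Y_j)=h_j\otimes Y_j$. The $\mathbbm{G}$-action is defined by choosing, for each $g\in\mathbbm{G}$, a lift $\tilde g\in G$ along $\pi$ and setting $g\triangleright Y_j:=\tilde g\blacktriangleright Y_j$. Once $V$ is shown to be a genuine object of ${_\mathbbm{G}^\mathbbm{G}\mathcal{YD}^\Phi}$, the pair $(\id,\pi)$ automatically satisfies the hypotheses of Lemma \ref{p4.2}, delivering the desired isomorphism through $\pi$.

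The main technical point, and the only place where the hypothesis is used, is the well-definedness of this $\mathbbm{G}$-action. Two lifts of the same $g$ differ by an element $k$ of $\ker\pi=\langle g_1^{\m_1},\ldots,g_n^{\m_n}\rangle$. Using the key identity $\widetilde{\pi^*(\Phi)}_h(e,f)=\widetilde{\Phi}_{\pi(h)}(\pi(e),\pi(f))$ together with the normalization of $\widetilde{\Phi}_{\pi(h_j)}$ at the unit, the projective representation relation on $\widetilde V$ reduces the problem to showing $k\blacktriangleright Y_j=Y_j$ for every $k\in\ker\pi$. For the generators this is exactly the hypothesis, and an induction on word length extends it to arbitrary products, since at each step the relevant projective-representation factor is $\widetilde{\Phi}_{\pi(h_j)}$ evaluated with one argument equal to $1_\mathbbm{G}$, hence equal to $1$. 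The projective representation relation for the new $\mathbbm{G}$-action on $V$ with respect to $\widetilde{\Phi}_{\pi(h_j)}$ then follows directly from that on $\widetilde V$ via the same cocycle identity. I expect the notational bookkeeping between $\widetilde{\pi^*(\Phi)}$ and $\widetilde{\Phi}$ to be the only minor nuisance.
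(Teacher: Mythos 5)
Your proposal is correct and matches the paper's proof: the same $(F,\pi)$ argument via Equation~\eqref{eq4.2} for the ``only if'' direction, and for the ``if'' direction the same pushforward construction along $\pi$ with $(\id,\pi)$ as the isomorphism. The only cosmetic difference is that the paper fixes the specific set-theoretic section $\iota$ of \eqref{e4.5} (so well-definedness is automatic and the YD axioms are left as a routine check), whereas you take arbitrary lifts and verify independence of the lift; the underlying cocycle computation --- that $\widetilde{\Phi}_{\pi(h_j)}$ is trivial whenever one argument lies in $\ker\pi$, so $\ker\pi$ acts trivially --- is exactly what makes the paper's ``one can easily show'' go through.
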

\begin{proof}
If $g_i^{\m_i}\blacktriangleright Y_j=Y_j,$ then one can easily show $\widetilde{V}$ is an object in $_\mathbbm{G}^\mathbbm{G} \mathcal{YD}^\Phi$ by defining:
\begin{eqnarray*}
&&\delta_{L}:\;V \to \k\G\otimes V,\;\;\;\;\delta_{L}=(\pi\otimes \id)\rho_{L};\\
&&\triangleright:\;\k\mathbbm{G}\otimes V\to V,\;\;\;\;\mathbbm{g}\triangleright Z=\iota(\mathbbm{g})\blacktriangleright Z.
\end{eqnarray*} It is obviously that $(\id_V,\pi)$ is an isomorphism between $\B(\widetilde{V})\in{_G^G \mathcal{YD}^{\pi^*(\Phi)}}$ and $\B(V)\in \ _\mathbbm{G}^\mathbbm{G} \mathcal{YD}^\Phi.$

Conversely, suppose $(F,\pi)$ is a isomorphism from $\B(V)\in{_G^G \mathcal{YD}^{\pi^*(\Phi)}}$ to $\B(U)\in \ _\mathbbm{G}^\mathbbm{G} \mathcal{YD}^\Phi.$ The by Definition \ref{d4.8} and Equation \eqref{eq4.2}, we have $ F(g_i^{\m_i}\blacktriangleright Y_j)=\pi(g_i^{\m_i})\blacktriangleright F(Y_j)=F(Y_j).$ This implies $g_i^{\m_i}\blacktriangleright Y_j=Y_j$ for all $1\leq i\leq n, 1\leq j\leq m.$
\end{proof}

Now fix a usual Nichols algebra of diagonal type $\B(V)'\in {_G^G\mathcal{YD}}$ with support group $G$. According to Figure I, we need to answer the following question:
\begin{center} When $\B(V)'^{J_{\underline{\textbf{a}}}}$ is isomorphic to a Nichols algebra in $_\mathbbm{G}^\mathbbm{G}\mathcal{YD}^{\Phi_{\underline{\mathbf{a}}}}$ through $\pi$?$\;\;\;\;\;\;$(\textsf{Question} $(\diamond)$)
\end{center}

Let $\{X_i|1\leq i\leq m\}$ be a canonical basis of $V.$  Assume that
$$\delta'_{L}(X_{i})=h_{i}\otimes X_{i},\;\;\;\;g_k\triangleright'X_j=q_{kj}X_j$$
for $1\leq i,j\leq m,\; 1\leq k\leq n$, $h_{i}\in G$ and $q_{kj}\in \k^*$, where $\delta'_L$ (resp. $\triangleright'$) is the comodule (resp. module)
structure map of $\B(V)'\in {^G_G\mathcal{YD}}$. So there are $0\leq x_{kj}, s_{ik}< m_{k}$ such that
$$q_{kj}=\zeta_{m_k}^{x_{kj}},\;\;h_{i}=\prod_{k=1}^{n}g_k^{s_{ik}}$$
for $1\leq i,j\leq m$ and $1\leq k\leq n$. Let $\mathbf{X}=(x_{ij})_{n\times m}$.
By assumption, the support group $G_{\B(V)'}=G$,  $\{h_{i}|1\leq i\leq m\}$ generate the group $G,$  so there are $t_{jl}\in \mathbb{N}$ such that
$$g_{j}=\prod_{l=1}^{m}h_l^{t_{jl}},\quad 1\leq j\leq n.$$
By $\mathbf{S}$ and $\mathbf{T},$ we denote the matrixes $(s_{ik})_{m\times n}$ and $(t_{jl})_{n\times m}.$ It is obvious that
\begin{equation} \label{e4.6} \mathbf{T}\mathbf{S}
  \equiv \left(\begin{array}{cccc}1\ (\text{mod} \ m_1) & 0 \ (\text{mod}\ m_1)&\cdots& 0 \ (\text{mod}\ m_1)\\ 0\ (\text{mod}\ m_2)& 1\ (\text{mod}\ m_2)&\cdots&  0\ (\text{mod}\ m_2)\\
  \cdots&\cdots&\cdots&\cdots\\
  0\ (\text{mod}\ m_n)& 0\ (\text{mod}\ m_n)&\cdots&  1\ (\text{mod}\ m_n)
  \end{array}\right).\end{equation}
 With these notations, we can give the answer to Question $(\diamond)$ now.

\begin{proposition}\label{P4.8}
The twisting $\B(V)'^{J_{\underline{\textbf{a}}}}$ is isomorphic to a Nichols algebra in $_\mathbbm{G}^\mathbbm{G}\mathcal{YD}^{\Phi_{\underline{\mathbf{a}}}}$ through $\pi$ if and only if the following congruence equalities hold:
\begin{eqnarray}
\sum_{j=1}^m x_{ij}t_{lj}&\equiv& 0 \;(\emph{mod} \;\mathbbm{m}_i), \;1\leq l<i \leq n,\label{e4.7}\\
\sum_{j=1}^mx_{ij}t_{ij}&\equiv& a_i \;(\emph{mod} \;\mathbbm{m}_i), \;1\leq i\leq n,\label{e4.8}\\
(\sum_{j=1}^mx_{ij}t_{lj})\mathbbm{m}_l&\equiv& \mathbbm{m}_ia_{il}\; (\emph{mod} \; \mathbbm{m}_i\mathbbm{m}_l), \;1\leq i<l\leq n.\label{e4.9}
\end{eqnarray}
\end{proposition}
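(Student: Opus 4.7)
The plan is to combine Lemma~\ref{P4.6} with a direct computation of the twisted action. By Lemma~\ref{P4.6}, $\B(V)'^{J_{\underline{\mathbf{a}}}}$ descends through $\pi$ precisely when $g_i^{\mathbbm{m}_i}\blacktriangleright X_j=X_j$ for every $1\leq i\leq n$ and $1\leq j\leq m$. Using the formula $g\blacktriangleright X=J_{\underline{\mathbf{a}}}(g,x)J_{\underline{\mathbf{a}}}(x,g)^{-1}(g\triangleright X)$ from Subsection~2.4 together with $g_i^{\mathbbm{m}_i}\triangleright X_j=\zeta_{\mathbbm{m}_i}^{x_{ij}}X_j$ (which relies on $m_i=\mathbbm{m}_i^2$), this condition is equivalent to $\zeta_{\mathbbm{m}_i}^{x_{ij}}J_{\underline{\mathbf{a}}}(g_i^{\mathbbm{m}_i},h_j)=J_{\underline{\mathbf{a}}}(h_j,g_i^{\mathbbm{m}_i})$.

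I next substitute the explicit formula \eqref{2cochain}. Two observations are crucial. Since $s_{jk}-s_{jk}'\equiv 0\pmod{\mathbbm{m}_k}$ by definition of $s_{jk}'$, every surviving factor in $J_{\underline{\mathbf{a}}}(g_i^{\mathbbm{m}_i},h_j)$ is a power of some $\zeta_{\mathbbm{m}_k}^{s_{jk}-s_{jk}'}=1$, so this factor collapses to $1$. In the other factor $J_{\underline{\mathbf{a}}}(h_j,g_i^{\mathbbm{m}_i})$, the remainder $\mathbbm{m}_i\bmod\mathbbm{m}_i=0$ forces only the index $l=i$ in the first product of \eqref{2cochain}, and only indices $(s,t)=(i,t>i)$ in the second, to contribute, yielding $\zeta_{\mathbbm{m}_i}^{a_is_{ji}}\prod_{i<t}\zeta_{\mathbbm{m}_t}^{a_{it}s_{jt}}$. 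Consequently the descent condition reduces to the family of identities
\begin{equation*}
\zeta_{\mathbbm{m}_i}^{x_{ij}-a_is_{ji}}\prod_{i<t}\zeta_{\mathbbm{m}_t}^{-a_{it}s_{jt}}=1,
\end{equation*}
which I denote $(\heartsuit_{ij})$, indexed by $(i,j)\in\{1,\ldots,n\}\times\{1,\ldots,m\}$.

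Finally, I repackage $\{(\heartsuit_{ij})\}$ into the three congruence systems of the statement by means of the matrix $\mathbf{T}$. Raising $(\heartsuit_{ij})$ to the $t_{lj}$-th power and multiplying over $j$, the identity $\mathbf{TS}\equiv I_n$ recorded in \eqref{e4.6}, which in particular holds modulo $\mathbbm{m}_k$, collapses each $\sum_j t_{lj}s_{jk}$ to $\delta_{lk}$. A case analysis on the position of $l$ relative to $i$ then yields the three families directly: $l<i$ produces \eqref{e4.7}, $l=i$ produces \eqref{e4.8}, and $l>i$ produces the mixed-order identity $\zeta_{\mathbbm{m}_i}^{\sum_j x_{ij}t_{lj}}=\zeta_{\mathbbm{m}_l}^{a_{il}}$, which recasts as \eqref{e4.9} after lifting both sides into $\mu_{\mathbbm{m}_i\mathbbm{m}_l}$. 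The hardest step I anticipate is the converse implication, namely recovering every individual $(\heartsuit_{ij})$ from the $n^2$ summed congruences: the crux will be to exploit that $\mathbf{TS}\equiv I_n$ holds modulo the full order $m_k=\mathbbm{m}_k^2$ and not merely modulo $\mathbbm{m}_k$, using this extra ``level'' together with the structural identity $s_{jk}\equiv s_{jk}'\pmod{\mathbbm{m}_k}$ to invert the averaging step.
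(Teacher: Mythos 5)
Your reduction via Lemma~\ref{P4.6} to the family of equalities $g_i^{\mathbbm{m}_i}\blacktriangleright X_j=X_j$, and the simplification of the twisting cochain to produce the single-index family $(\heartsuit_{ij})$, are both correct and match what the paper does (your observation that $J_{\underline{\mathbf{a}}}(g_i^{\mathbbm{m}_i},h_j)=1$ while $J_{\underline{\mathbf{a}}}(h_j,g_i^{\mathbbm{m}_i})=\zeta_{m_i}^{a_is_{ji}\mathbbm{m}_i}\prod_{i<t}\zeta_{\mathbbm{m}_i\mathbbm{m}_t}^{a_{it}s_{jt}\mathbbm{m}_i}$ agrees with the paper's display leading to~\eqref{e4.10}, modulo an evident typo there). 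The forward implication, obtained by raising $(\heartsuit_{ij})$ to the $t_{lj}$-th power, multiplying over $j$, and invoking~\eqref{e4.6}, is also correct and identical in spirit to the paper's argument; your case split on $l<i$, $l=i$, $l>i$ and the lift of the $l>i$ case into $\mu_{\mathbbm{m}_i\mathbbm{m}_l}$ are all right.

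However, the converse implication --- that~\eqref{e4.7}--\eqref{e4.9} recover every individual $(\heartsuit_{ij})$ --- is not actually proved in your submission: you explicitly defer it as ``the hardest step I anticipate'' and offer only a heuristic. That is not a proof, and it is precisely where the content of the proposition lies: the forward direction collapses $m$ conditions (indexed by $j$) into $n$ summed conditions (indexed by $l$), and when $m>n$ this summation loses information that must somehow be recovered. The sketch you give (exploiting that $\mathbf{T}\mathbf{S}\equiv I_n$ holds modulo the squares $m_k=\mathbbm{m}_k^2$, combined with $s_{jk}\equiv s_{jk}'\pmod{\mathbbm{m}_k}$) is qualitatively different from what the paper does: the paper pads $\mathbf{T}$ to an $m\times m$ matrix $\mathbf{T}'$ of rank $n$, produces auxiliary matrices $\mathbf{S}_r$ with $\sum_k s^r_{rk}t'_{kl}=\delta_{rl}$, raises the derived identity~\eqref{eq4.18} to the $s^r_{rl}$-th power and multiplies over $l$ to isolate $x_{ir}$. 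You should carry the argument out explicitly (or identify why your ``extra level'' idea suffices), since without it you have only proved the ``only if'' half of the biconditional. When you do, be alert that the averaging step genuinely needs some invertibility property of $\mathbf{T}$ relative to the fixed generating set $\{h_1,\dots,h_m\}$: which generating sets and which lifts $\mathbf{T}$ make the inversion possible is exactly the nontrivial point to pin down.
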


\begin{proof}
By Lemma \ref{P4.6}, $\B(V)'^{J_{\underline{\mathbf{a}}}}$ is isomorphic to a Nichols algebra in $_\mathbbm{G}^\mathbbm{G}\mathcal{YD}^{\Phi_{\underline{\mathbf{a}}}}$ if and only if $g_i^{\m_i}\triangleright'_{J_{\underline{\mathbf{a}}}} X_j=X_j$ for all $1\leq i\leq n, 1\leq j\leq m.$ By definition, we have
$$
g_i^{\mathbbm{m}_i}\triangleright'_{J_{\underline{\mathbf{a}}}} X_j=\frac{J_{\underline{\mathbf{a}}}(g^{\mathbbm{m}_i}_i,h_j)}
{J_{\underline{\mathbf{a}}}(h_j,g^{\mathbbm{m}_i}_i)}g^{\mathbbm{m}_i}_i\triangleright' X_j=\frac{J_{\underline{\mathbf{a}}}(g^{\mathbbm{m}_i}_i,h_j)}
{J_{\underline{\mathbf{a}}}(h_j,g^{\mathbbm{m}_i}_i)}\zeta_{m_i}^{\mathbbm{m}_i
x_{ij}}X_j
$$ for all $1\leq i\leq n, 1\leq j\leq m.$ Using \eqref{2cochain},
\begin{equation*}
\frac{J_{\underline{\mathbf{a}}}(g^{\mathbbm{m}_i}_i,h_j)}{J_{\underline{\mathbf{a}}}(h_j,g^{\mathbbm{m}_i}_i)}\zeta_{m_i}^{\mathbbm{m}_ix_{ij}}=
\frac{1}{\zeta_{m_i}^{a_is_{ji}\mathbbm{m}_i}\prod_{i<k\leq n}\zeta_{\mathbbm{m}_i\mathbbm{m}_k}^{a_{ik}s_{ik}\mathbbm{m}_i}}\zeta_{m_i}^{\mathbbm{m}_i
x_{ij}}.
\end{equation*}
So for all $1\leq i\leq n, 1\leq j\leq m,$  equations $g_i^{\m_i}\triangleright'_{J_{\underline{\mathbf{a}}}} X_j=X_j$ are equivalent to
\begin{equation}\label{e4.10}
\zeta_{m_i}^{\mathbbm{m}_i
x_{ij}}=\zeta_{m_i}^{a_is_{ji}\mathbbm{m}_i}\prod_{i<k\leq n}\zeta_{\mathbbm{m}_i\mathbbm{m}_k}^{a_{ik}s_{jk}\mathbbm{m}_i}.
\end{equation}
Next we will show that Equations \eqref{e4.10} are equal to Equations \eqref{e4.7}-\eqref{e4.9}. At first, we assume that \eqref{e4.10} hold. Then for any $1\leq l\leq n,$ we have
\begin{equation}\label{e4.11}
\zeta_{m_i}^{\mathbbm{m}_i
x_{ij}t_{lj}}=\zeta_{m_i}^{a_it_{lj}s_{ji}\mathbbm{m}_i}\prod_{i<k\leq n}\zeta_{\mathbbm{m}_i\mathbbm{m}_k}^{a_{ik}t_{lj}s_{jk}\mathbbm{m}_i}.
\end{equation}
Consider the product of two sides of equations \eqref{e4.11} for $j=1,\cdots ,m,$ we get
 \begin{equation}\label{e4.12}
\prod_{j=1}^{m}\zeta_{m_i}^{\mathbbm{m}_i
x_{ij}t_{lj}}=\prod_{j=1}^{m}[\zeta_{m_i}^{a_it_{lj}s_{ji}\mathbbm{m}_i}\prod_{i<k\leq n}\zeta_{\mathbbm{m}_i\mathbbm{m}_k}^{a_{ik}t_{lj}s_{jk}\mathbbm{m}_i}],
\end{equation} which is equal to
\begin{equation}\label{e4.13}
\zeta_{m_i}^{\sum_{j=1}^{m}\mathbbm{m}_i
x_{ij}t_{lj}}=\zeta_{m_i}^{a_i\sum_{j=1}^{m}t_{lj}s_{ji}\mathbbm{m}_i}\prod_{i<k\leq n}\zeta_{\mathbbm{m}_i\mathbbm{m}_k}^{a_{ik}\sum_{j=1}^{m}t_{lj}s_{jk}\mathbbm{m}_i}.
\end{equation}
When $i>l$, by \eqref{e4.6} the Equations \eqref{e4.13} become \begin{equation}\label{e4.14}\zeta_{m_i}^{\sum_{j=1}^{m}\mathbbm{m}_i
x_{ij}t_{lj}}=1.\end{equation} These imply that $\sum_{j=1}^m x_{ij}t_{lj}\equiv 0\; (\textrm{mod}\;\mathbbm{m}_i)$, which are the Equations \eqref{e4.7}.
When $i=l,$ then \eqref{e4.13} become \begin{equation}\label{e4.15}\zeta_{m_i}^{\sum_{j=1}^{m}\mathbbm{m}_i
x_{ij}t_{ij}}=\zeta_{m_i}^{a_i\mathbbm{m}_i},\end{equation} which imply the Equations \eqref{e4.8}. When $i<l,$ then \eqref{e4.13} become \begin{equation}\label{e4.16}\zeta_{m_i}^{\sum_{j=1}^{m}\mathbbm{m}_i
x_{ij}t_{lj}}=\zeta_{\mathbbm{m}_i\mathbbm{m}_l}^{a_{il}\mathbbm{m}_i},\end{equation} which are the the same as the Equations \eqref{e4.9}.

Next, we assume that \eqref{e4.7}-\eqref{e4.9} hold. Clearly, the Equations \eqref{e4.7}-\eqref{e4.9} are equal to \eqref{e4.14}-\eqref{e4.16}. Consider the product of the $3$-identities, we get
 \begin{equation}\label{eq4.18}
\prod_{j=1}^{m}\zeta_{m_i}^{\mathbbm{m}_i
x_{ij}t_{lj}}=\prod_{j=1}^{m}[\zeta_{m_i}^{a_it_{lj}s_{ji}\mathbbm{m}_i}\prod_{i<k\leq n}\zeta_{\mathbbm{m}_i\mathbbm{m}_k}^{a_{ik}t_{lj}s_{jk}\mathbbm{m}_i}]
\end{equation} for $1\leq i,l\leq n.$
Let $\mathbf{T}'$ be an $m\times m$-matrix such that
$t'_{ij}=t_{ij}$ for $ 1\leq i\leq n, 1\leq j\leq m$ and otherwise $t'_{ij}=0.$
From \eqref{e4.6} we know that the rank of $\mathbf{T}'$ is $n.$ So for any $1\leq i\leq n,$ there exists an $m\times m$-matrix $\mathbf{S}_i=(s^i_{jk})$ such that $\sum_{k=1}^ms^i_{jk}t'_{kl}=\delta_{ij}\delta_{il}$ for all $1\leq j,l\leq m.$ Take $s_{rl}^{r}$'s power of the equation \eqref{eq4.18} and we have
 \begin{equation*}
\prod_{j=1}^{m}\zeta_{m_i}^{\mathbbm{m}_i
x_{ij}s^r_{rl}t_{lj}}=\prod_{j=1}^{m}[\zeta_{m_i}^{a_is^r_{rl}t_{lj}s_{ji}\mathbbm{m}_i}\prod_{i<k\leq n}\zeta_{\mathbbm{m}_i\mathbbm{m}_k}^{a_{ik}s^r_{rl}t_{lj}s_{jk}\mathbbm{m}_i}].
\end{equation*}
By taking product of the identity above for $1\leq l\leq m,$ we get
 \begin{equation}\label{e4.17}
\prod_{l=1}^m\prod_{j=1}^{m}\zeta_{m_i}^{\mathbbm{m}_i
x_{ij}s^r_{rl}t_{lj}}=\prod_{l=1}^m\prod_{j=1}^{m}[\zeta_{m_i}^{a_is^r_{rl}t_{lj}s_{ji}\mathbbm{m}_i}\prod_{i<k\leq n}\zeta_{\mathbbm{m}_i\mathbbm{m}_k}^{a_{ik}s^r_{rl}t_{lj}s_{jk}\mathbbm{m}_i}].
\end{equation}
The left hand of the identity is $\prod_{l=1}^m\prod_{j=1}^{m}\zeta_{m_i}^{\mathbbm{m}_i
x_{ij}s^r_{rl}t_{lj}}=\prod_{l=1}^m\prod_{j=1}^{m}\zeta_{m_i}^{\mathbbm{m}_i
x_{ij}s^r_{rl}t'_{lj}}=\zeta_{m_i}^{\mathbbm{m}_ix_{ir}}.$
The right hand side is
\begin{eqnarray*}
&&\prod_{l=1}^m\prod_{j=1}^{m}[\zeta_{m_i}^{a_is^r_{rl}t_{lj}s_{ji}\mathbbm{m}_i}\prod_{i<k\leq n}\zeta_{\mathbbm{m}_i\mathbbm{m}_k}^{a_{ik}s^r_{rl}t_{lj}s_{jk}\mathbbm{m}_i}]\\
&=& \prod_{l=1}^m\prod_{j=1}^{m}[\zeta_{m_i}^{a_is^r_{rl}t'_{lj}s_{ji}\mathbbm{m}_i}\prod_{i<k\leq n}\zeta_{\mathbbm{m}_i\mathbbm{m}_k}^{a_{ik}s^r_{rl}t'_{lj}s_{jk}\mathbbm{m}_i}]\\
&=&\zeta_{m_i}^{a_is_{ri}\mathbbm{m}_i}\prod_{i<k\leq n}\zeta_{\mathbbm{m}_i\mathbbm{m}_k}^{a_{ik}s_{rk}\mathbbm{m}_i}.
\end{eqnarray*}
Hence \eqref{e4.17} is actually identical to \eqref{e4.10}.
\end{proof}

\begin{remark} It is well-known that any finite abelian group $\G$ can be written in the form $\G=\langle \g_1\rangle\times \cdots\times \langle \g_n\rangle$ such that the order of $\g_i$ is divided by that of $\g_j$ for $1\leq i<j\leq n.$ So we can assume that $\m_i|\m_j$ for $i<j$. \emph{We will keep this assumption in the rest of the paper.} In this way, the identities in \eqref{e4.9} equal to
$$\frac{\mathbbm{m}_l}{\mathbbm{m}_i}\sum_{j=1}^mx_{ij}t_{lj}\equiv a_{il}\; (\emph{mod} \;\mathbbm{m}_l), \;1\leq i<l\leq n.$$
\end{remark}

The above proposition implies that we don't have many choices on the sequence $\underline{\textbf{a}}\in A,$ see \eqref{3.1}.
\begin{corollary}\label{P4.9}
For the Nichols algebra $\B(V)'\in\ _G^G\mathcal{YD},$ there is at most one $\underline{\textbf{a}}\in A$ such that $\B(V)'^{J_{\underline{\mathbf{a}}}}$ is isomorphic to a Nichols algebra in $_\mathbbm{G}^\mathbbm{G}\mathcal{YD}^{\Phi_{\underline{\mathbf{a}}}}$ through $\pi$. Moreover, this $\underline{\textbf{a}}$ exists if and only if the Equations \eqref{e4.7} hold and in this case $\underline{\textbf{a}}$ can be taken in the following way:
\begin{equation}\label{eq4.19}
a_i\equiv \sum_{j=1}^mx_{ij}t_{ij}\;(\emph{mod}\; \mathbbm{m}_i); \;\;
a_{il}\equiv  \frac{\mathbbm{m}_l}{\mathbbm{m}_i}\sum_{j=1}^mx_{ij}t_{lj}\;(\emph{mod}\;\mathbbm{m}_l);\;\;
a_{ilt}=0
\end{equation}
for $1\leq i\leq n,\; 1\leq i<l\leq n$ and $1\leq i<l<t\leq n.$
\end{corollary}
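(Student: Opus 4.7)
The plan is to apply Proposition \ref{P4.8} directly, reading off what each of the three families of congruences \eqref{e4.7}--\eqref{e4.9} forces on $\underline{\mathbf{a}}$. The key observation is that \eqref{e4.7} involves only the matrices $\mathbf{X}$ and $\mathbf{T}$, not any of the entries of $\underline{\mathbf{a}}$; so it is a condition intrinsic to $\B(V)'$, which must hold as a prerequisite and cannot be influenced by our choice. This already gives the first half of the statement: if \eqref{e4.7} fails, then no $\underline{\mathbf{a}}$ can work, while if it holds, the remaining task is to exhibit a unique $\underline{\mathbf{a}}\in A$ that also satisfies \eqref{e4.8} and \eqref{e4.9}.

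For the triple-indexed entries I would argue by necessity: any valid $\underline{\mathbf{a}}$ yields, via Proposition \ref{P4.8}, a Nichols algebra of diagonal type sitting inside $_\G^\G\mathcal{YD}^{\Phi_{\underline{\mathbf{a}}}}$ whose support group is $\G$, so by Lemma \ref{P4.7} the cocycle $\Phi_{\underline{\mathbf{a}}}$ must be abelian, and then Proposition \ref{p3.9} forces $a_{ilt}=0$ for all $1\le i<l<t\le n$. For the single-indexed coordinates, the normalization $0\le a_i<\m_i$ together with \eqref{e4.8} pins down $a_i\equiv\sum_j x_{ij}t_{ij}\pmod{\m_i}$ uniquely. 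For the double-indexed coordinates I would invoke the standing assumption $\m_i\mid\m_l$ (recorded in the remark immediately after Proposition \ref{P4.8}) in order to divide \eqref{e4.9} through by $\m_i$, obtaining the equivalent form $a_{il}\equiv(\m_l/\m_i)\sum_j x_{ij}t_{lj}\pmod{\m_l}$; since $(\m_i,\m_l)=\m_i$, this determines $a_{il}$ uniquely in $[0,\m_i)$.

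Putting the three determinations together produces the single candidate tuple displayed in \eqref{eq4.19}. Existence then follows from the ``if'' direction of Proposition \ref{P4.8}, and the ``at most one'' assertion follows because every coordinate of $\underline{\mathbf{a}}$ has been forced by one of the three congruence families. The main subtlety I expect to have to address is verifying that the formula for $a_{il}$ actually lands in the normalization range $[0,(\m_i,\m_l))=[0,\m_i)$ rather than only in $[0,\m_i\m_l/\m_i)=[0,\m_l)$; this is built into the consistency of Proposition \ref{P4.8} once \eqref{e4.7} is assumed, since \eqref{e4.9} as stated in that proposition is presented as solvable in the prescribed range of $A$, but I would double-check the step by tracing through how \eqref{e4.6} constrains $\sum_j x_{ij}t_{lj}$ modulo $\m_i$.
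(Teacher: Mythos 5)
Your argument is correct and follows essentially the same route as the paper's own proof: read off from Proposition~\ref{P4.8} that \eqref{e4.7} is an $\underline{\mathbf{a}}$-independent precondition, let \eqref{e4.8}--\eqref{e4.9} pin down the $a_i$ and $a_{il}$, and kill the $a_{rst}$ via abelianity and Proposition~\ref{p3.9} (the paper invokes the standing assumption that $\Phi_{\underline{\mathbf{a}}}$ is abelian, established right after Lemma~\ref{P4.7}, whereas you re-derive it from Lemma~\ref{P4.7}; these have the same content). The range-of-representatives subtlety you flag for $a_{il}$ — that \eqref{e4.9} determines $a_{il}$ only modulo $\m_l$ while the set $A$ normalizes $a_{il}$ in $[0,(\m_i,\m_l))$ — is a genuine loose end that the paper's proof also leaves implicit, so you were right to note it as something to trace through rather than take for granted.
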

\begin{proof}
By Proposition \ref{P4.8}, we know that $\underline{\textbf{a}}$ must satisfy Equations \eqref{e4.8} and \eqref{e4.9}. At the same time, since we always assume that $\Phi_{\underline{\textbf{a}}}$ is an abelian cocycle, $a_{rst}=0$ for all $1\leq r<s<t\leq n$ by  Proposition \ref{p3.9}. Therefore, there is at most one $\underline{\textbf{a}}$ satisfies these conditions. Proposition \ref{P4.8} also implies that such an $\underline{\textbf{a}}$ exists if and only if Equations \eqref{e4.7} hold.
\end{proof}

Now we are in the position to find the ``return trip" as follows.

\begin{figure}[hbt]
\begin{picture}(180,105)(0,0)
    \put(70,90){\makebox(0,0){$ \B(V)\in {^{\G}_{\G}\mathcal{YD}^{\Phi_{\underline{\textbf{a}}}}}$ }}
    \put(50,55){\vector(0,1){25}} \put(100,70){\makebox(0,0){\small\textsf{{Eq. \eqref{e4.7} satisfy}}}}
          \put(80,50){\makebox(0,0){$ \B(V)'^{J_{\underline{\textbf{a}}}}\in {^{G}_{G}\mathcal{YD}^{\pi^\ast(\Phi_{\underline{\textbf{a}}})}}$ }}
\put(110,30){\makebox(0,0){\small\textsf{ By Eq. \eqref{eq4.19}, find $\; J_{\underline{\textbf{a}}}$}}}  \put(50,15){\vector(0,1){25}}
     \put(100,10){\makebox(0,0){\small\textsf{Usual Nichols algebra }$\; \B(V)'\in {^{G}_{G}\mathcal{YD}}$}}

     \put(100,-10){\makebox(0,0){\small\textsf{Figure II}}}
\end{picture}
\end{figure}

\subsection{Root datum and Classification of Nichols algebras of diagonal type with arithmetic root systems in $_\mathbbm{G}^\mathbbm{G}\mathcal{YD}^\Phi$}

By Figure II, we want to formulate the conditions listed in Figure II by the language of root data. Using such language, we get a complete classification of Nichols algebras of diagonal type with arithmetic root systems in $_\mathbbm{G}^\mathbbm{G}\mathcal{YD}^\Phi$.

Now suppose $(\bigtriangleup,\chi,E)$ is an arithmetic root system,
$\mathcal{D}_{\chi,E}$ is the Dynkin diagram of $(\bigtriangleup,\chi,E).$ Up to twist equivalence, $(\bigtriangleup,\chi,E)$ is uniquely determined by $\mathcal{D}_{\chi,E}.$ In \cite{H3}, Heckenberger classified all the arithmetic root systems. Fix a Dynkin diagram with $m$ vertices, we call $$\{q_{ii}=\chi(e_i,e_i),\; \widetilde{q_{ij}}=\chi(e_i,e_j)\chi(e_j,e_i)|1\leq i,j\leq m\}$$ the structure constants of $\mathcal{D}_{\chi,E}.$

\begin{definition}
Let $\G=\Z_{\m_1}\times\cdots\times \Z_{\m_n}$ be the abelian group defined as above and set $m_i:=\m_i^2$ for $1\leq i\leq n$. Suppose $\mathcal{D}_{\chi,E}$ is a Dynkin diagram of an arithmetic root system $\triangle_{\chi,E}$ and $(q_{ii},\widetilde{q_{ij}})$ is the set of structure constants. If there exist parameter matrices $\mathbf{S}$ and $\mathbf{X}$ satisfying
\begin{itemize}
\item[1.]$\mathbf{S}=(s_{ij})_{m\times n}$ is a matrix with integer entries $0\leq s_{ij}<m_i$ for all $1\leq i\leq m, 1\leq j\leq n$ such that there exists a matrix $\mathbf{T}=(t_{ij})_{n\times m}$ satisfying \eqref{e4.6}.
\item[2.]$\mathbf{X}=(x_{ij})_{n\times m}$ with integer entries $0\leq x_{ij}<m_i$ for all $1\leq i\leq m, 1\leq j\leq n$ such that
 $q_{ii}=\prod_{k=1}^{n}\zeta_{m_k}^{s_{ik}x_{ki}},$  $\widetilde{ q_{ij}}=\prod_{k=1}^{n}\zeta_{m_k}^{s_{ik}x_{kj}+s_{jk}x_{ki}},$ and satisfy  Equations \eqref{e4.7}.
\end{itemize}
Then we call $\mathfrak{D}=\mathfrak{D}(\mathcal{D}_{\chi,E},\mathbf{S},\mathbf{X})$ a root datum over $\G$ and moreover we call $\triangle$ (resp. $\triangle_{\chi,E}$) the root system (resp. arithmetic root system) of $\mathfrak{D}$.
\end{definition}

For a fixed root datum $\mathfrak{D}=\mathfrak{D}(\mathcal{D}_{\chi,E},\mathbf{S},\mathbf{X})$ over $\G$, define a sequence $\underline{\textbf{a}}\in A$ through Equations \eqref{eq4.19}.
Now we can define a Nichols algebra $\B(V_\mathfrak{D})\in {_G^{G}\mathcal{YD}^{\pi^{\ast}(\Phi_{\underline{\mathbf{a}}})}}$ in the following way: Let $V_{\mathfrak{D}}$ be the Yetter-Drinfeld module in $ _G^G\mathcal{YD}^{\pi^*(\Phi_{\underline{\mathbf{a}}})}$ with a canonical basis $\{X_i|1\leq i\leq m\}$ such that $$\delta_L(X_i)=\prod_{k=1}^{n}g_k^{s_{ik}}\otimes X_i,\ \ g_i\blacktriangleright X_j=\zeta_{m_i}^{x_{ij}}\frac{J_{\underline{\textbf{a}}}
(g_i,\prod_{k=1}^{n}g_k^{s_{ik}})}{J_{\underline{\textbf{a}}}(\prod_{k=1}^{n}g_k^{s_{ik}},g_i)}X_j.$$
Now the main result of this section can be stated as follows.
\begin{theorem}\label{T4.10}
\begin{itemize}
\item[(1)] The Nichols algebra $\B(V_\mathfrak{D})$ is isomorphic to a Nichols algebra of diagonal type with arithmetic root system in $_\mathbbm{G}^\mathbbm{G}\mathcal{YD}^{\Phi_{\underline{\mathbf{a}}}}$ through the group epimorphism $\pi: G\to \G.$
\item[(2)]Suppose $\B(V)$ is a Nichols algebra of diagonal type with arithmetic root system in $_\mathbbm{G}^\mathbbm{G}\mathcal{YD}^{\Phi_{\underline{\mathbf{a}}}}$ and the support group is $\mathbbm{G},$ then there exists a root datum $\mathfrak{D}$ over $\G$ such that $\B(V_\mathfrak{D})\cong \B(V)$ through the group epimorphism $\pi: G\to \G.$
\end{itemize}
\end{theorem}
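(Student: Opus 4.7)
Both statements are essentially the two directions of the correspondence schematized by Figures I and II in Subsections 4.2--4.3: part (1) runs Figure II upward starting from a root datum, while part (2) runs Figure I downward starting from a given Nichols algebra and then reads off the data. The substantive content is (a) matching diagonal braiding constants with Dynkin labels, and (b) invoking the uniqueness clause of Corollary \ref{P4.9} to identify the $3$-cocycle produced by the construction with the prescribed $\Phi_{\underline{\mathbf{a}}}$.

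For (1), given $\mathfrak{D}=(\mathcal{D}_{\chi,E},\mathbf{S},\mathbf{X})$ I would first introduce the auxiliary untwisted Yetter-Drinfeld module $V_\mathfrak{D}'\in {_G^G\mathcal{YD}}$ on the same basis $\{X_i\}$ with grading $\delta_L(X_i)=\prod_k g_k^{s_{ik}}\otimes X_i$ and diagonal action $g_k\triangleright' X_j=\zeta_{m_k}^{x_{kj}}X_j$. Axiom 2 of the root datum is precisely the statement that the braiding matrix of $V_\mathfrak{D}'$ realizes the structure constants $(q_{ii},\widetilde{q_{ij}})$ of $\mathcal{D}_{\chi,E}$; hence $\B(V_\mathfrak{D}')$ is a usual diagonal Nichols algebra whose arithmetic root system is $\triangle_{\chi,E}$ (Dynkin diagrams determine arithmetic root systems up to twist equivalence). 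The action prescribed on $V_\mathfrak{D}$ is by construction the twist $g\triangleright v=(J_{\underline{\mathbf{a}}}(g,\deg v)/J_{\underline{\mathbf{a}}}(\deg v,g))\,g\triangleright' v$, so Lemma \ref{L2.12} yields $\B(V_\mathfrak{D})\cong \B(V_\mathfrak{D}')^{J_{\underline{\mathbf{a}}}}$ inside ${_G^G\mathcal{YD}^{\pi^\ast(\Phi_{\underline{\mathbf{a}}})}}$. Because the components of $\underline{\mathbf{a}}$ were defined by \eqref{eq4.19} and the root datum includes the compatibility Equations \eqref{e4.7}, Proposition \ref{P4.8} (or equivalently Corollary \ref{P4.9}) supplies exactly the hypothesis of Lemma \ref{P4.6}, so this algebra descends through $\pi$ to an object of $_\mathbbm{G}^\mathbbm{G}\mathcal{YD}^{\Phi_{\underline{\mathbf{a}}}}$. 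By Definition \ref{d4.8} its arithmetic root system is that of $\B(V_\mathfrak{D}')$, namely $\triangle_{\chi,E}$.

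For (2), given $\B(V)$ with support group $\mathbbm{G}$, I would reverse the argument. Lemma \ref{l4.4} and Proposition \ref{P4.5} lift $V$ to $\widetilde V\in {_G^G\mathcal{YD}^{\pi^\ast(\Phi_{\underline{\mathbf{a}}})}}$ with $\B(V)\cong \B(\widetilde V)$, and Proposition \ref{p3.10} together with Lemma \ref{L2.12} produces an untwisted Nichols algebra $\B(V)':=\B(\widetilde V)^{J_{\underline{\mathbf{a}}}^{-1}}\in {_G^G\mathcal{YD}}$. Read $\mathbf{S}$ off the degrees $h_i=\prod_k g_k^{s_{ik}}$ of a canonical basis and $\mathbf{X}$ off the diagonal action $g_k\triangleright' X_j=\zeta_{m_k}^{x_{kj}}X_j$ of $\B(V)'$; the $h_i$ generate $G$ because $V$ has support group $\mathbbm{G}$, so a matrix $\mathbf{T}$ satisfying \eqref{e4.6} exists. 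The arithmetic root system of $\B(V)'$, which comes from Heckenberger's classification, produces the Dynkin diagram $\mathcal{D}_{\chi,E}$ whose structure constants equal the $(q_{ii},\widetilde{q_{ij}})$ read from $\B(V)'$. Finally, since $\B(V)'^{J_{\underline{\mathbf{a}}}}\cong \B(\widetilde V)$ already lies over $\mathbbm{G}$ through $\pi$, Proposition \ref{P4.8} forces the Equations \eqref{e4.7}; hence $\mathfrak{D}=(\mathcal{D}_{\chi,E},\mathbf{S},\mathbf{X})$ is a root datum over $\mathbbm{G}$. Applying (1) to this $\mathfrak{D}$ then gives $\B(V_\mathfrak{D})\cong \B(V)$ through $\pi$.

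The main subtlety will be step (b): making sure that the $\underline{\mathbf{a}}$ extracted from $(\mathbf{S},\mathbf{X},\mathbf{T})$ via \eqref{eq4.19} in part (2) genuinely coincides with the $\underline{\mathbf{a}}$ in the hypothesis $\Phi=\Phi_{\underline{\mathbf{a}}}$. This is not automatic from the twist calculation alone and relies on the uniqueness statement of Corollary \ref{P4.9}: since $\B(V)'^{J_{\underline{\mathbf{a}}}}$ is isomorphic to a Nichols algebra in $_\mathbbm{G}^\mathbbm{G}\mathcal{YD}^{\Phi_{\underline{\mathbf{a}}}}$ through $\pi$, the sequence $\underline{\mathbf{a}}$ realizing this is unique, and must be the one produced by \eqref{eq4.19}. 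Careful bookkeeping of the direction of twisting ($J_{\underline{\mathbf{a}}}$ versus $J_{\underline{\mathbf{a}}}^{-1}$) and of the cocycle-ratio factor appearing in the definition of the action on $V_\mathfrak{D}$ is what actually glues the construction and the classification together.
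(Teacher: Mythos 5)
Your proposal is correct and follows the same route as the paper's own (much terser) proof: part (1) runs the construction of Figure II and part (2) runs Figure I, reading off $(\mathbf{S},\mathbf{X})$ from the usual Nichols algebra $\B(V)'$ and invoking Proposition \ref{P4.8} together with the definition of a root datum. The one place you add genuine value is the explicit appeal to the uniqueness clause of Corollary \ref{P4.9} to ensure the sequence $\underline{\mathbf{a}}$ recovered from $(\mathbf{S},\mathbf{X},\mathbf{T})$ via \eqref{eq4.19} coincides with the $\underline{\mathbf{a}}$ in the hypothesis --- the paper leaves this silent, and it is worth spelling out as you did; one can also see it more directly by noting that $J_{\underline{\mathbf{a}}}$ in Figure I is already built from the given $\Phi_{\underline{\mathbf{a}}}$, so Proposition \ref{P4.8} applied to $\B(V)'^{J_{\underline{\mathbf{a}}}}\cong\B(V)$ forces \eqref{e4.7}--\eqref{e4.9} with this very $\underline{\mathbf{a}}$.
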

\begin{proof} The first statement is just a direct consequence of Figure II and the definition of a root datum. Now we show the second statement. According to Figure I, from $\B(V)$ one can construct a usual Nichols algebra $\B(V)'$. By the construction of $\B(V)'$ we can find that $\B(V)'^{J_{\underline{\textbf{a}}}}$ is isomorphic to $\B(V)$ through $\pi$. By Proposition \ref{P4.8} and the definition of a root datum, we know that there is a root datum $\mathfrak{D}$ over $\G$ such that $\B(V)'^{J_{\underline{\textbf{a}}}}=\B(V_\mathfrak{D})$.
\end{proof}

\begin{convention} \emph{By this theorem, we know that the Nichols algebra $\B(V_\mathfrak{D})$ is isomorphic to a unique Nichols algebra in $_\mathbbm{G}^\mathbbm{G}\mathcal{YD}^{\Phi_{\underline{\mathbf{a}}}}$. For convenience, this Nichols algebra is denoted by $\B(\mathfrak{D})$.}
\end{convention}

\section{Classification results}
In this section, all the finite-dimensional connected graded pointed Majid algebras $\MM$ of diagonal type will be classified. The main idea is to show that the coninvariant subalgebra of $\MM$ is indeed a Nichols algebra of diagonal type and from this we can apply the classification results gotten in the previous section.
\subsection{General setup.} In this section, we always assume that $\MM$ is a finite-dimensional connected coradically graded pointed Majid algebra of diagonal type.
 From Subsection 2.1, we know that $$\MM_{0}=(\k\mathbb{G},\Phi)$$
 where $\mathbbm{G}$ is the group consisting of all the group-like elements and $\Phi$ is a $3$-cocycle on $\mathbbm{G}.$ Using the same arguments given in Proposition \ref{P4.7} and our assumption that $\MM$ being connected, we know that $\G$ is abelian and $\Phi$ is an abelian cocycle. Therefore, $$\mathbb{G}=\Z_{\mathbbm{m}_1}\times \cdots\times \Z_{\mathbbm{m}_n} = \langle \mathbbm{g}_1 \rangle \times \cdots \times \langle \mathbbm{g}_n \rangle $$
with $\mathbbm{m}_i|\mathbbm{m}_j$ for $1\leq i<j\leq n,$ and
$$\Phi=\Phi_{\underline{\mathbf{a}}}$$
for some $\underline{\mathbf{a}}\in A$ with $a_{rst}=0$ for all $1\leq r<s<t\leq n$.

Let $\R$ be the coinvariant subalgebra of $\MM,$ then $\R$ is a Hopf algebra in $_\mathbbm{G}^\mathbbm{G}\mathcal{YD}^\Phi$ and
 $$\MM=\R\# \k\G.$$
 The main task of this section is to show that $\R$ is indeed a Nichols algebra in $_\mathbbm{G}^\mathbbm{G}\mathcal{YD}^\Phi$. From the classification results obtained in the previous section, we can classify $\MM$ directly.


\subsection{$\R$ is a Nichols algebra}

 Note that we already showed that each finite-dimensional rank $2$ pointed Majid algebra is generated by group-like and skew-primitive elements in \cite{qha6}. We gradually realize that the methods developed in \cite{qha6} still work for pointed Majid algebras of diagonal type. For completeness and safety, the proof will be given though it is similar to the version of \emph{ibid.}.

 The main result of this subsection can be stated as follows.

 \begin{proposition}\label{P5.3} In ${^{\mathbb{G}}_{\mathbb{G}}\mathcal{YD}^{\Phi_{\underline{\textbf{a}}}}}$, we have $$\R\cong \B(\R_1).$$
 \end{proposition}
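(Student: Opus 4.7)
The plan is to pull $\R$ back along the change-of-base-group $\pi\colon G\twoheadrightarrow\G$ (with $G=\Z_{\m_1^2}\times\cdots\times\Z_{\m_n^2}$) and then untwist by the $2$-cochain $J_{\underline{\mathbf{a}}}$ of Proposition~\ref{p3.10}, so that the question becomes a statement about a finite-dimensional connected graded Hopf algebra of diagonal type in the \emph{usual} Yetter-Drinfeld category ${^G_G\mathcal{YD}}$; then to apply Angiono's theorem \cite{Ang}; and finally to transport the conclusion back into ${^{\G}_{\G}\mathcal{YD}^{\Phi_{\underline{\mathbf{a}}}}}$.

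\textbf{Step one: transfer into the untwisted category.} Applying the construction of Lemma~\ref{l4.4} componentwise produces a graded Hopf algebra $\widetilde{\R}\in {^G_G\mathcal{YD}^{\pi^{*}(\Phi_{\underline{\mathbf{a}}})}}$ with the same underlying $\Z^l$-graded coalgebra (in particular $\widetilde{\R}_1=\R_1$), and the multiplication and comultiplication of $\R$ carry over verbatim. Proposition~\ref{p3.10} yields $\pi^{*}(\Phi_{\underline{\mathbf{a}}})=\partial J_{\underline{\mathbf{a}}}$, so twisting by $J_{\underline{\mathbf{a}}}^{-1}$ in the sense of Lemma~\ref{L2.12} produces $\R':=\widetilde{\R}^{J_{\underline{\mathbf{a}}}^{-1}}\in {^G_G\mathcal{YD}}$. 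Since this twist only rescales the multiplication and the $G$-action by nonzero scalars on homogeneous pairs, $\R'$ is again a finite-dimensional connected graded Hopf algebra of diagonal type, and $\R'_1\cong\R_1$ as $\Z^l$-graded spaces.

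\textbf{Step two: invoke Angiono.} The bosonization $\R'\#\k G$ is a finite-dimensional pointed Hopf algebra over the abelian group $G$ in the classical sense. Angiono's theorem \cite{Ang} asserts that $\R'\#\k G$ is generated as an algebra by its group-likes together with its skew-primitive elements; equivalently, in the braided picture, $\R'$ is generated in degree one and its braided primitive space coincides with $\R'_1$, which forces $\R'\cong \B(\R'_1)$ in ${^G_G\mathcal{YD}}$ by the universal property of the Nichols algebra.

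\textbf{Step three: transport back.} Running Lemma~\ref{L2.12} in the opposite direction gives
\[ \widetilde{\R}=(\R')^{J_{\underline{\mathbf{a}}}}\cong \B(\R'_1)^{J_{\underline{\mathbf{a}}}}\cong \B\bigl((\R'_1)^{J_{\underline{\mathbf{a}}}}\bigr)=\B(\widetilde{\R}_1) \]
in ${^G_G\mathcal{YD}^{\pi^{*}(\Phi_{\underline{\mathbf{a}}})}}$. Because the $G$-action on $\widetilde{\R}$ factors through $\pi$ by construction, the hypothesis of Lemma~\ref{P4.6} that each $g_i^{\m_i}$ act trivially is automatic on $\widetilde{\R}_1$, so the isomorphism descends through the inverse of the base change to $\R\cong \B(\R_1)$ in ${^{\G}_{\G}\mathcal{YD}^{\Phi_{\underline{\mathbf{a}}}}}$.

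\textbf{Expected main obstacle.} The heart of the argument lies in step two: translating the Andruskiewitsch-Schneider/Angiono generation statement for the ordinary pointed Hopf algebra $\R'\#\k G$ into the exact identity $\R'\cong\B(\R'_1)$ of braided Hopf algebras, which requires matching the skew-primitive subspace of $\R'\#\k G$ precisely with the copy of $\R'_1$ and ruling out extra braided primitives in higher degree. The surrounding bookkeeping---verifying that the $\Z^l$-grading of Proposition~\ref{p4.1}, the identification of braided primitives, and the maximality of the defining Hopf ideal are preserved under the twist of Lemma~\ref{L2.12} and the base-change of Lemmas~\ref{l4.4} and~\ref{P4.6}---is routine but the place where most technical care must be spent.
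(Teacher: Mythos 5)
Your proposal is correct in spirit, but it proves the proposition along a genuinely different route than the paper does, and there is one point where you lean on a stronger black box than the paper is willing to assume. The paper's own proof deliberately avoids citing Angiono's generation theorem for pointed Hopf algebras over abelian groups. Instead it proves a weaker auxiliary statement (the paper's Proposition~\ref{p3.5}: a finite-dimensional graded braided Hopf algebra $R$ in ${^\G_\G\mathcal{YD}^\Phi}$ that is \emph{already generated by $R_1$} must equal $\B(R_1)$, proved via the transfer-untwist machinery plus Lemmas~\ref{l4.15}--\ref{l4.17} and Angiono's presentation of defining relations), and then supplies the missing ``generated in degree one'' input by a duality trick: Lemma~\ref{l5.7} says the graded dual of a braided Hopf algebra with $P(R)=R_1$ is generated in degree one, so the argument goes $\R\rightsquigarrow\R^*$ generated by $\R_1^*$ $\rightsquigarrow$ $\R^*=\B(\R_1^*)$ by Proposition~\ref{p3.5} $\rightsquigarrow$ $P(\R^*)=\R_1^*$ $\rightsquigarrow$ ${}^*(\R^*)=\R$ generated by $\R_1$ $\rightsquigarrow$ $\R=\B(\R_1)$ by Proposition~\ref{p3.5} again. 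You, by contrast, skip the dual trick and invoke Angiono's generation theorem directly on the classical bosonization $\R'\#\k G$; this is a valid shortcut, since \cite{Ang} does prove the Andruskiewitsch--Schneider conjecture over abelian groups, and it buys you a shorter argument at the cost of a heavier imported theorem. Beyond that, a couple of small inaccuracies: in Step one you refer to the ``$\Z^l$-graded coalgebra'' of $\R$, but $\R$ carries only the $\N$-grading a priori (the $\Z^l$-grading of Proposition~\ref{p4.1} is a property of Nichols algebras, which is precisely what is to be proved); and the untwisting of $\widetilde{\R}$ to $\R'$ should cite the general $2$-cochain twist construction in the text surrounding Lemma~\ref{L2.12} rather than Lemma~\ref{L2.12} itself, which is stated only for Nichols algebras. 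Neither affects the logic, since the twist on multiplication and $G$-action makes sense for any $G$-comodule algebra. With those cosmetic repairs your argument stands, but it is not the paper's argument.
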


%
%
%
We give several preparations. Take a Nichols algebra $\B(V)$ of diagonal type in ${^{\G}_{\G}}\mathcal{YD}^{\Phi_{\underline{\textbf{a}}}}$. Then according to the Figure I, we have $\B(V)\cong \B(\widetilde{V})\in {^{G}_{G}}\mathcal{YD}^{\pi^\ast(\Phi_{\underline{\textbf{a}}})}$ and $\B(\widetilde{V})^{J_{\underline{\textbf{a}}}^{-1}}=
\B(\widetilde{V}^{J_{\underline{\textbf{a}}}^{-1}})$ is a usual Nichols algebra in $^G_{G}\mathcal{YD}$. As before, we denote this usual Nichols algebra by $\B(V)'$ and we use $V'$ to denote the Yetter-Drinfeld module $\widetilde{V}^{J_{\underline{\textbf{a}}}^{-1}}$. That is, $\B(V)'=\B(V')$. Let $\{X_i|1\leq i\leq m\}$ be a canonical basis of $V'$. Then $\B(V')\cong T(V')/I$ where $I$ is the Hopf ideal of $T(V')$ generated by the polynomials in $\{X_i|1\leq i\leq m\}$ listed in \cite[Theorem 3.1]{Ang}. In the following, let $\mathbf{S}$ denote the set of these polynomials. Define a map $\Psi:  T_{\partial(J_{\underline{\textbf{a}}})}(\widetilde{V})=
T_{\partial(J_{\underline{\textbf{a}}})}(V'^{J_{\underline{\textbf{a}}}})\to T(V')$ by
\begin{equation} \label{ne1}
\Psi((\cdots ((Y_1\circ Y_2)\circ Y_3)\cdots Y_n))=\prod_{i=1}^{n-1}J_{\underline{\textbf{a}}}(y_1\cdots y_i,y_{i+1})Y_1Y_2\cdots Y_n
\end{equation} for all $Y_i\in \{X_1,X_2,\ldots,X_m\}.$ It is easy to see that $\Psi$ is an isomorphism of linear spaces. The following conclusion is  \cite[Lemma 4.5]{qha6}.

\begin{lemma}\label{l4.15}
The set $\Psi^{-1}(\mathbf{S})$ is a minimal set of defining relations of $\B(V')^{J_{\underline{\textbf{a}}}}=\B(\widetilde{V}).$
\end{lemma}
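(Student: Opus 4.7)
The plan is to read $\Psi$ as the rescaling that converts $\circ$-products back to ordinary products, and then to transport Angiono's minimal defining set across this rescaling. First, I would observe that twisting by $J=J_{\underline{\textbf{a}}}$ leaves the underlying vector space and the braided coproduct of $T(V')$ intact and only rescales the multiplication. Combining this observation with Lemma \ref{L2.12} gives the identification $T_{\partial(J)}(\widetilde{V})=T(V')^{J}$ under which $\B(\widetilde{V})=\B(V')^{J}=T(V')^{J}/I$, where $I\subseteq T(V')$ is the common underlying vector subspace cutting out $\B(V')=T(V')/I$. Consequently, a subset $R$ of $T_{\partial(J)}(\widetilde{V})$ is a set of defining relations of $\B(\widetilde{V})$ precisely when it generates $I$ as a braided Hopf ideal of $T(V')^{J}$.

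Second, the formula (\ref{ne1}) shows that $\Psi$ is a $\Z^{m}$-graded linear isomorphism which, on every $\G$-homogeneous $\circ$-monomial $(\cdots((Y_1\circ Y_2)\circ Y_3)\cdots\circ Y_n)$, acts by multiplication by the nonzero scalar $\prod_{i=1}^{n-1}J(y_1\cdots y_i,y_{i+1})$ depending only on the $\G$-degrees of the factors. Hence for any $\G$-homogeneous subset $R\subseteq T(V')$, expanding an element of the Hopf ideal generated by $\Psi^{-1}(R)$ in $T(V')^{J}$ into $\circ$-products and rescaling by the appropriate $J$-factors produces an element of the Hopf ideal generated by $R$ in $T(V')$, and vice versa. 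In particular, Angiono's set $\mathbf{S}$ generates $I$ in $T(V')$ if and only if $\Psi^{-1}(\mathbf{S})$ generates $I$ in $T(V')^{J}$, so $\Psi^{-1}(\mathbf{S})$ is a set of defining relations of $\B(\widetilde{V})$.

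Finally, the minimality claim passes through the same rescaling principle. Minimality of $\mathbf{S}$ amounts to the assertion that no $s\in\mathbf{S}$ lies in the braided Hopf ideal generated by $\mathbf{S}\setminus\{s\}$, which is a purely $\Z^{m}$-graded linear condition on the image of $s$ in a certain quotient of $T(V')$. Since $\Psi^{-1}$ is a $\Z^{m}$-graded linear isomorphism that rescales each homogeneous component by a nonzero scalar, it carries a minimal generating family to a minimal generating family; thus $\Psi^{-1}(\mathbf{S})$ is a minimal set of defining relations of $\B(\widetilde{V})=\B(V')^{J_{\underline{\textbf{a}}}}$. The one technical point I expect to need care is verifying that $\Psi$ really respects generation as a \emph{braided Hopf} ideal and not merely as a two-sided ideal. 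This is handled by the fact that the coproducts of $T(V')$ and $T(V')^{J}$ agree on the common underlying vector space and that $\Psi$ is assembled coherently from $J$-factors on each graded piece, so that the closure operations $\{$two-sided ideal$\} + \{$coideal$\}$ are preserved.
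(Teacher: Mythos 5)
The paper gives no proof here---it simply cites Lemma~4.5 of \cite{qha6}---so I can only judge your proposal on its own merits. Your overall strategy (transport Angiono's minimal set along $\Psi$) is the natural one, and your diagnosis in the last paragraph that the delicate point is braided-Hopf-ideal generation rather than two-sided-ideal generation is exactly right. Two of the claims you lean on, however, are not correct as stated. First, the coproduct of $T(V')^{J}$ is \emph{not} that of $T(V')$: the tensor equivalence $F_{J}$ with coherence $\varphi_{2}(U,W)\colon Y\otimes Z\mapsto J(y,z)^{-1}Y\otimes Z$ forces $\Delta^{J}(X)=J(x_{(1)},x_{(2)})^{-1}X_{(1)}\otimes X_{(2)}$, i.e. a $J^{-1}$-rescaling on each $G$-bigraded piece of the coproduct. (One can check directly that keeping $\Delta$ unchanged and setting $X\circ Y=J(x,y)XY$ does not give a bialgebra in $^{G}_{G}\mathcal{YD}^{\partial J}$.) What rescues your argument is that the coideal condition $\Delta(I)\subseteq I\otimes T+T\otimes I$ is insensitive to nonzero rescalings on $G$-bigraded pieces, and the ideal is $\Z^{m}$-graded (hence $G$-graded) by Proposition~\ref{p4.1}. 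Second, $\Psi^{-1}$ does \emph{not} act by a single nonzero scalar on each $\Z^{m}$-homogeneous component: the $\Z^{m}$-degree of a monomial only records the multiset of generators, while the scalar $\prod_{i}J_{\underline{\textbf{a}}}(y_{1}\cdots y_{i},y_{i+1})$ in~\eqref{ne1} depends on the order, so distinct monomials of the same $\Z^{m}$-degree are rescaled differently. A graded linear isomorphism that merely rescales individual monomials has no reason to send braided Hopf ideals to braided Hopf ideals, so this assertion, as given, does not justify the transfer of minimality.

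The clean repair is to establish that $\Psi$ is a Hopf algebra isomorphism $T_{\partial(J_{\underline{\textbf{a}}})}(\widetilde{V})\to T(V')^{J_{\underline{\textbf{a}}}}$ in $^{G}_{G}\mathcal{YD}^{\pi^{*}(\Phi)}$: both are free tensor algebras on the same object, $\Psi$ is by construction the unique algebra morphism with $X_{i}\mapsto X_{i}$, and since both are primitively generated and $\Psi$ matches primitives with primitives, $\Psi$ is automatically a coalgebra morphism. Under such an isomorphism, Hopf ideals and their minimal generating sets transfer formally. One then checks, using exactly the rescaling observations you already make (two-sided ideals unchanged, coideal and antipode conditions seen only through $G$-bigraded subspaces), that $\mathbf{S}$ remains a minimal Hopf ideal generating set in $T(V')^{J_{\underline{\textbf{a}}}}$; composing with $\Psi^{-1}$ then gives the lemma. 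So your route is the right one, but the two load-bearing assertions about $\Delta^{J}$ and about $\Psi^{-1}$ being a per-$\Z^{m}$-degree scalar need to be replaced by the isomorphism argument just sketched.
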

%
%
We also need the following two lemmas, which were given essentially in \cite{Ang} and were rephrased as follows in \cite{qha6}.
\begin{lemma}\cite[Lemma 4.6]{qha6}\label{l4.16}
Let $Z$ be a polynomial in $\mathbf{S},$ then we have $\B(V'^{J_{\underline{\textbf{a}}}}\oplus \k\Psi^{-1}(Z))\cong \B(V'\oplus \k Z)^{J_{\underline{\textbf{a}}}}.$
\end{lemma}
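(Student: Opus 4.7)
The plan is to reduce the claim to Lemma \ref{L2.12} (Nichols algebras commute with $2$-cochain twists) combined with a compatibility check on one-dimensional Yetter-Drinfeld summands. First, view $V' \oplus \k Z$ as an object of ${^G_G\mathcal{YD}}$, where $\k Z$ carries the Yetter-Drinfeld structure inherited from $T(V')$: its $G$-degree is that of the polynomial $Z$ in the natural $G$-grading of the tensor algebra, and its $G$-action is by the character $\chi_Z(g)=\chi(g,z)$ determined by the bicharacter of the diagonal Nichols algebra. Since $Z \in \mathbf{S}$ is a defining relation of a Nichols algebra of diagonal type, it is $G$-homogeneous and $G$-semi-invariant, so $\k Z$ is indeed a Yetter-Drinfeld submodule. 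Applying Lemma \ref{L2.12} gives $\B(V' \oplus \k Z)^{J_{\underline{\mathbf{a}}}} \cong \B((V' \oplus \k Z)^{J_{\underline{\mathbf{a}}}})$, reducing the task to producing an isomorphism of Yetter-Drinfeld modules
$$(V' \oplus \k Z)^{J_{\underline{\mathbf{a}}}} \cong V'^{J_{\underline{\mathbf{a}}}} \oplus \k\Psi^{-1}(Z)$$
in ${^G_G\mathcal{YD}^{\partial(J_{\underline{\mathbf{a}}})}}$.

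The twist is additive and leaves the coaction untouched, so the left-hand side decomposes as $V'^{J_{\underline{\mathbf{a}}}} \oplus (\k Z)^{J_{\underline{\mathbf{a}}}}$; the first summand matches by definition. For the second, both $\k Z$ and $\k\Psi^{-1}(Z)$ are one-dimensional of the same $G$-degree, since the defining formula \eqref{ne1} for $\Psi$ only rescales individual homogeneous monomials and therefore preserves the $G$-grading. The comodule structures therefore coincide, and what remains is to compare the two one-dimensional $G$-characters.

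The character on $(\k Z)^{J_{\underline{\mathbf{a}}}}$ is, by the twist formula, $g \mapsto \frac{J_{\underline{\mathbf{a}}}(g,z)}{J_{\underline{\mathbf{a}}}(z,g)}\chi_Z(g)$. The character on $\k\Psi^{-1}(Z)$ comes from the diagonal action of $G$ in the twisted tensor algebra $T_{\partial(J_{\underline{\mathbf{a}}})}(V'^{J_{\underline{\mathbf{a}}}})$, where the action on a product of homogeneous elements of degrees $y_1,y_2$ carries an additional $2$-cocycle factor from $\partial(J_{\underline{\mathbf{a}}})$ as in Definition \ref{d2.9}. The main obstacle is to verify by direct computation that, after expanding $Z = \sum c_i Y_{i_1}\cdots Y_{i_{n_i}}$ and substituting the explicit expression of $\Psi^{-1}$, the rescaling scalars $\prod_k J_{\underline{\mathbf{a}}}(y_{i_1}\cdots y_{i_k},y_{i_{k+1}})^{-1}$ from $\Psi^{-1}$, the twist factors $\frac{J_{\underline{\mathbf{a}}}(g,y_{i_k})}{J_{\underline{\mathbf{a}}}(y_{i_k},g)}$ from the twisted action on each generator, and the associativity constants from $\partial(J_{\underline{\mathbf{a}}})$ collapse, via the $2$-cocycle identities and the abelianness of $G$, to precisely $\frac{J_{\underline{\mathbf{a}}}(g,z)}{J_{\underline{\mathbf{a}}}(z,g)}\chi_Z(g)$. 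The cleanest route is induction on the length of the monomial, reducing the inductive step to the two-factor case. Once this character equality is in hand, the one-dimensional summands are identified and the claim follows.
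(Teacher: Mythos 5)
The paper itself does not prove this lemma; it is imported verbatim from \cite[Lemma 4.6]{qha6}, so there is no in-text proof to compare against. Judged on its own terms, your argument is correct and follows the natural route: invoke Lemma \ref{L2.12} to reduce to an isomorphism of Yetter--Drinfeld modules $(V'\oplus\k Z)^{J_{\underline{\mathbf{a}}}}\cong V'^{J_{\underline{\mathbf{a}}}}\oplus\k\Psi^{-1}(Z)$, observe additivity of the twist and degree-preservation of $\Psi$, and then match the two one-dimensional characters. The key identity you reduce to in the two-factor step is exactly $\widetilde{\partial J}_g(e,f)\,\frac{J(g,e)}{J(e,g)}\,\frac{J(g,f)}{J(f,g)}=\frac{J(g,ef)}{J(ef,g)}$, which holds for any $2$-cochain $J$ on an abelian group, so the induction on monomial length closes cleanly.

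One small imprecision worth flagging: the scalars $\prod_k J_{\underline{\mathbf{a}}}(y_{i_1}\cdots y_{i_k},y_{i_{k+1}})^{-1}$ produced by $\Psi^{-1}$ play no role in the character computation at all. They merely rescale the spanning vector of a one-dimensional module, which leaves the $G$-eigenvalue unchanged (and since $Z$ is a primitive of fixed multi-degree, every monomial in $Z$ carries the same $G$-degree and the same character, so even for nonmonomial $Z$ there is no interference). Only the twist factors $\frac{J(g,y_{i_k})}{J(y_{i_k},g)}$ and the $\widetilde{\partial J}_g$ factors need to telescope, and they do as above. So what you describe as a ``collapse'' of three families of constants is really just a collapse of two; the third is irrelevant. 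This does not affect the correctness of the proof.
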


\begin{lemma}\cite[Lemma 4.7]{qha6}\label{l4.17}
Let $\B(V')\in \ _G^G\mathcal{YD}$ be a finite-dimensional Nichols algebra of diagonal type. Suppose that $Z$ is a polynomial in $\mathbf{S}$ and $U'=V'\oplus \k Z,$ then $\B(U')$ is infinite-dimensional.
\end{lemma}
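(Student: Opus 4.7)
The plan is to argue by contradiction via Heckenberger's classification of arithmetic root systems. Assume $\B(U')$ is finite-dimensional. Since $U'=V'\oplus\k Z$ is again of diagonal type, $\B(U')$ must then have an arithmetic root system, and hence its generalized Dynkin diagram $\mathcal{D}_{U'}$ appears in Heckenberger's classification \cite{H3}. My strategy is to show this is impossible for every type of relation $Z$ in Angiono's list $\mathbf{S}$.

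First I would record the braiding data of $U'$. Let $\{X_1,\dots,X_n\}$ be a canonical basis of $V'$ with $\delta_L(X_i)=g_i\otimes X_i$, and let $\alpha\in G$ be the $G$-grading of the polynomial $Z\in T(V')$. Since $Z$ is adjoined to $U'$ as a fresh primitive generator of grading $\alpha$, the braiding scalars of $U'$ extend those of $V'$ by $q_{iZ}=\chi(g_i,\alpha)$, $q_{Zi}=\chi(\alpha,g_i)$ and $q_{ZZ}=\chi(\alpha,\alpha)$, so that $\mathcal{D}_{U'}$ is obtained from $\mathcal{D}_{V'}$ by adjoining a single vertex labelled $q_{ZZ}$ and connected to the old vertices by edges labelled $\widetilde{q}_{iZ}=q_{iZ}q_{Zi}$. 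A useful observation here is that $\k Z$ is a Yetter-Drinfeld sub-module of $U'$, so the one-generator Nichols algebra $\B(\k Z)$ embeds into $\B(U')$; in particular, whenever $q_{ZZ}=1$ the embedded subalgebra $\B(\k Z)=\k[Z]$ is already infinite-dimensional and we are done.

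Next I would proceed by case analysis on the type of the defining relation $Z$ in Angiono's list \cite[Thm.~3.1]{Ang}. The paradigmatic case is a power-root-vector relation $Z=x_\beta^{N_\beta}$ for a positive root $\beta$: then $\alpha=N_\beta\beta$, and the very hypothesis that $x_\beta^{N_\beta}$ is a relation forces $\chi(\beta,\beta)$ to be a primitive $N_\beta$-th root of unity, whence $q_{ZZ}=\chi(\beta,\beta)^{N_\beta^2}=1$, and the observation above finishes this case. For quantum Serre relations and for the remaining ``sporadic'' relations attached to the exceptional rows of Heckenberger's table, $q_{ZZ}$ need not equal $1$, and one must instead exhibit a rank-2 sub-diagram $\{X_i,Z\}$ of $\mathcal{D}_{U'}$ whose labels $(q_{ii},\widetilde{q}_{iZ},q_{ZZ})$ fail to match any entry of the rank-2 classification in \cite{h}; this is where the Cartan-integer relations already holding on $\mathcal{D}_{V'}$ are used crucially to pin down the scalars.

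The principal obstacle is the bookkeeping of this case analysis; essentially one is rerunning the minimality half of Angiono's presentation theorem inside the enlarged Yetter-Drinfeld category. To keep the argument uniform I would organize the cases by the row of Heckenberger's list occupied by $\mathcal{D}_{V'}$, and for each such row verify that every entry of $\mathbf{S}$ yields an extended diagram $\mathcal{D}_{U'}$ absent from \cite{H3}. In practice it always suffices to locate a non-arithmetic rank-2 sub-diagram, so that the heavy lifting reduces to Heckenberger's rank-2 classification.
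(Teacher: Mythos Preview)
The paper does not give its own proof of this lemma; it is simply quoted as \cite[Lemma 4.7]{qha6} with no argument supplied here. There is therefore nothing in the present paper against which to compare your proposal.

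Your strategy is nonetheless the natural one and matches in outline what underlies the cited result. The observation that for a power relation $Z=x_\beta^{N_\beta}$ one has $q_{ZZ}=\chi(\beta,\beta)^{N_\beta^2}=1$, forcing $\B(\k Z)\cong\k[Z]$ to be infinite-dimensional, is correct and disposes of that family cleanly. For the quantum Serre relations and the sporadic relations attached to exceptional rows, you correctly identify that the task reduces to locating a rank-$2$ subdiagram $\{X_i,Z\}$ of $\mathcal{D}_{U'}$ absent from Heckenberger's rank-$2$ tables, but you have only sketched this step rather than carried it out. That row-by-row verification is precisely the content of the lemma being cited; it is routine given the tables in \cite{h,H3} and the Cartan-integer constraints already holding on $\mathcal{D}_{V'}$, but it is not something one can wave through in a single sentence. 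So your proposal is a correct outline, with the substantive case analysis still owed.
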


The following is a generalized version of \cite[Propostion 4.8]{qha6}, where we proved it in rank two case.
\begin{proposition}\label{p3.5}
Let $R=\oplus_{i\geq 0} R_i$ be a finite-dimensional graded (not necessarily coradically graded) Hopf algebra in ${_\mathbbm{G}^\mathbbm{G}\mathcal{YD}^\Phi}$. If $R$ is generated by $R_1,$ then $R=\B(R_1).$
\end{proposition}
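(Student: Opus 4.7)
The plan is to reduce the problem to the usual Yetter-Drinfeld setting and derive a contradiction using Angiono's presentation \cite{Ang} of diagonal Nichols algebras together with Lemma \ref{l4.17}. Concretely, combining Lemma \ref{p4.2} with the twist by $J_{\underline{\mathbf{a}}}^{-1}$, whose differential cancels $\pi^{*}\Phi_{\underline{\mathbf{a}}}$ by Proposition \ref{p3.10}, we transfer $R$ to a finite-dimensional graded Hopf algebra $R'$ in the usual Yetter-Drinfeld category $^G_G\mathcal{YD}$. Both operations preserve the $\N$-grading, the degree-zero part $\k$, and the subalgebra generated by the degree-one subspace; hence it suffices to establish $R' = \B(V)$ for $V := R'_1$. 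Writing $R' = T(V)/I_{R'}$, the graded Hopf ideal $I_{R'}$ has zero intersection with $\k \oplus V$ and is therefore contained in the maximal such Hopf ideal $I_{\max}$, yielding a canonical graded Hopf algebra surjection $R' \twoheadrightarrow \B(V) = T(V)/I_{\max}$.

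Suppose, towards a contradiction, that $I_{R'} \subsetneq I_{\max}$. Angiono's theorem provides a minimal generating set $\mathbf{S} = \{Z_1, Z_2, \ldots\}$ of $I_{\max}$, ordered so that each $Z_i$ is a primitive element in the intermediate Hopf algebra $T(V)/\langle Z_1, \ldots, Z_{i-1}\rangle$. Let $i$ be the least index with $Z_i \notin I_{R'}$; then $\langle Z_1, \ldots, Z_{i-1}\rangle \subseteq I_{R'}$, so the image $\bar Z_i$ of $Z_i$ in $R'$ is a nonzero primitive element in some degree $n \geq 2$. Now $W := V \oplus \k \bar Z_i$ is a Yetter-Drinfeld submodule of $R'$ whose nonzero elements are all primitive, so the inclusion extends to a graded Hopf algebra map $\phi \colon T(W) \to R'$. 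Its kernel $K$ is a graded Hopf ideal disjoint from $\k \oplus W$ (the degrees $1$ and $n$ both survive in $R'$), so $K \subseteq I_{\max}^{W}$ and the image of $\phi$ surjects onto $\B(W)$. But this image sits inside $R'$ and is therefore finite-dimensional, whereas $\B(W) \cong \B(V \oplus \k Z_i)$ is infinite-dimensional by Lemma \ref{l4.17}, a contradiction.

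Having shown $R' = \B(V)$ in $^G_G\mathcal{YD}$, we untwist via Lemma \ref{L2.12} and invert the base-group change via Lemma \ref{p4.2} to conclude $R \cong \B(R_1)$ in ${^{\G}_{\G}\mathcal{YD}^{\Phi_{\underline{\mathbf{a}}}}}$. The main obstacle is securing the ordered primitive presentation of $\mathbf{S}$, which supplies the nonzero primitive $\bar Z_i$ in $R'$ needed to run the argument; this is the nontrivial input from Angiono's work \cite{Ang}. The transfer compatibilities along $\pi$ and along the twist by $J_{\underline{\mathbf{a}}}^{-1}$, as well as the verification that these operations preserve both the Hopf-algebra structure and the generation-in-degree-one hypothesis, are routine consequences of the categorical framework set up in Section 4.
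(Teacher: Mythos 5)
Your argument is correct and follows essentially the same route as the paper's: transfer to the untwisted setting over the larger group $G$, use Angiono's minimal presentation to locate the lowest missing relation, observe that its image in $R$ (or $R'$) is primitive, and invoke Lemma \ref{l4.17} to obtain an infinite-dimensional Nichols algebra sitting (as a quotient of a subalgebra) inside the finite-dimensional $R$, which is absurd. The only organizational difference is that you transport all of $R$ to $^G_G\mathcal{YD}$ at the outset and do the entire contradiction there, whereas the paper stays over $\G$ and uses Lemmas \ref{l4.15} and \ref{l4.16} to compare the defining ideal of $R$ with $\Psi^{-1}(\mathbf{S})$ and to push the twisted module $U = R_1 \oplus \k\Psi^{-1}(Z)$ back to $^G_G\mathcal{YD}$ only at the moment Lemma \ref{l4.17} is applied; the two bookkeeping schemes are interchangeable. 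One small point worth flagging: Lemma \ref{p4.2} as stated applies to Nichols algebras, so your initial transfer of the arbitrary graded Hopf algebra $R$ strictly requires the (routine but unstated) extension of the change-of-base-group construction of Lemma \ref{l4.4} to general connected graded Hopf algebras in the category; the paper's formulation sidesteps this by transferring only $\B(R_1)$ and the relation set rather than $R$ itself.
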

\begin{proof}
Let $I$ be an ideal of $T_{\Phi}(R_1)$ such that $R=T_{\Phi}(R_1)/I.$ Clearly, we have a surjective Hopf map $$\theta:\; R\twoheadrightarrow \B(R_1).$$  By Proposition \ref{P4.5}, $\B(R_1)$ is also a Nichols algebra in ${^G_G\mathcal{YD}^{\pi^{*}(\Phi)}}$ for $G=\Z_{m_1}\times\cdots\times  \Z_{m_n}$. By Proposition \ref{p3.10}, $\pi^{*}(\Phi)=\partial{(J)}$. Therefore, $\B{(R_1)}^{J^{-1}} \in {^G_G\mathcal{YD}}$ is a usual Nichols algebra. Now assume that $\theta$ is not an isomorphism, then there should be some polynomials in $\Psi^{-1}(\mathbf{S}),$ which are not contained in $I$ by Lemma \ref{l4.15}. Suppose that $\Psi^{-1}(Z)$ is one of those with minimal length. Then we know that $\Psi^{-1}(Z)$ must be a primitive element in $R.$ Let $U=R_1\oplus \k\Psi^{-1}(Z),$ then by the preceding assumption there is an embedding of linear spaces $\B(U)\hookrightarrow R.$

 We already know that $\B(R_1)^{J^{-1}}$ is a finite-dimensional Nichols algebra in $_G^G\mathcal{YD}$. It is not hard to see that there exists $R_1'\in {_G^G\mathcal{YD}}$ such that $R_1=R_1'^J$ (since $J$ induces an equivalence between $_G^G\mathcal{YD}$ and $_G^G\mathcal{YD}^{\partial (J)}={_G^G\mathcal{YD}^{\pi^{\ast}(\Phi)}}$).  By Lemma \ref{l4.16}, we have $\B(R_1'\oplus Z)^J=\B(R_1\oplus \k\Psi^{-1}(Z))=\B(U).$ Note that $\B(R_1'\oplus Z)^J$ is infinite-dimensional due to Lemma \ref{l4.17}. Hence $\B(U)$ is infinite-dimensional, which contradicts to the assumption that $R$ is finite-dimensional. Thus $\theta$ is an isomorphism and $R$ is the Nichols algebra $\B(R_1).$
\end{proof}
In order to prove Proposition \ref{P5.3}, we still need the following lemma.

\begin{lemma}\label{l5.7}
Let $R=\oplus_{i\geq 0}R_i$ be a graded Hopf algebra in $_\mathbbm{G}^\mathbbm{G} \mathcal{YD}^\Phi$ with $R_0=\k 1$ and $P(R)=R_{1}.$ Then the right dual $R^*=\oplus_{i\geq 0}R_i^*$  (resp. the left dual $^*R=\oplus_{i\geq 0}{^*R_i}$) is generated by $R_{1}^*$  $(resp. \ {^*R_{1}}).$
\end{lemma}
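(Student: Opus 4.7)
The plan is to exploit the standard duality between primitives and indecomposables. Let $R^* = \bigoplus_{n \geq 0} R_n^*$ be equipped with its induced graded Hopf algebra structure in $^{\G}_{\G}\mathcal{YD}^{\Phi}$ (or its dual braided category): the multiplication on $R^*$ is transposed from the comultiplication of $R$, and the comultiplication on $R^*$ is transposed from the multiplication of $R$. Let $I = \bigoplus_{n \geq 1} R_n^* \subset R^*$ be the augmentation ideal. The key point is to establish a natural perfect pairing
\[ I/I^2 \;\cong\; P(R)^* \]
of graded objects in $^{\G}_{\G}\mathcal{YD}^{\Phi}$.

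To see this, observe that under the pairing $\langle -,-\rangle : R^* \otimes R \to \k$, we have $\langle f\cdot g,\, x\rangle = \langle f\otimes g,\, \Delta(x)\rangle$ for $f,g \in R^*$ and $x \in R$. If $f,g \in I$, both vanish on $1$, so only the part of $\Delta(x)$ lying in $R_+ \otimes R_+$ contributes. Hence $x \in R_+$ annihilates $I^2$ if and only if the reduced coproduct $\Delta(x) - x\otimes 1 - 1\otimes x$ vanishes, i.e., if and only if $x \in P(R)$. Since $R$ is connected graded with $R_0 = \k 1$, we have $P(R) \subseteq R_+$, and the pairing gives $P(R) \cong (I/I^2)^*$ as graded objects. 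Using the hypothesis $P(R) = R_1$, we conclude that $I/I^2$ is concentrated in degree $1$ and equals $R_1^*$ there.

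Finally I would apply a graded Nakayama-type argument: let $A \subseteq R^*$ be the subalgebra generated by $R_1^*$; I claim $A = R^*$. By induction on $n$, since $I/I^2$ vanishes in degrees $\geq 2$, every element of $R_n^*$ with $n \geq 2$ lies in $(I^2)_n = \sum_{i+j=n,\,i,j\geq 1} R_i^* \cdot R_j^*$, which by the inductive hypothesis sits inside $A$. (Here ``$\cdot$'' is interpreted with the bracketing dictated by the associator $\Phi$, but any choice of bracketing generates the same subspace.) The argument for the left dual $^{*}R$ is entirely parallel, since it carries an analogous graded Hopf algebra structure with the roles of left and right swapped. The main subtlety will be keeping track of the nontrivial associator: one must verify that the pairing and the transpose of multiplication/comultiplication are genuine morphisms in $^{\G}_{\G}\mathcal{YD}^{\Phi}$ (with its associator $\Phi$), but this is purely bookkeeping once the categorical duality for finite-dimensional objects in $^{\G}_{\G}\mathcal{YD}^{\Phi}$ is in place.
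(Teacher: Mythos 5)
Your argument --- passing to the graded dual, establishing the perfect pairing between $P(R)$ and the indecomposables $I/I^2$, and running a graded Nakayama induction --- is the standard duality proof and is essentially the approach the paper delegates to its companion reference (the proof of \cite[Lemma 4.10]{qha6}, which it cites without reproducing). The residual bookkeeping you flag (checking that the evaluation pairing and the transposed multiplication/comultiplication maps are genuine morphisms in $^{\G}_{\G}\mathcal{YD}^{\Phi}$ compatible with the nontrivial associator, and that the graded pieces are finite-dimensional so that graded duality is perfect) is precisely the verification the paper itself elides, so your proposal matches the paper's argument at the same level of rigor.
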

\begin{proof}
Note that the proof of \cite[Lemma 4.10]{qha6} does not depend on the abelian group $G$ and abelian $3$-cocycle $\Phi$ of $G.$ Hence we can prove the lemma in the same way.
\end{proof}

At last, we can prove Proposition \ref{P5.3}.

\noindent \textbf{Proof of Proposition \ref{P5.3}.} By assumption, $\R_0=\k 1$ and $P(\R)=\R_1.$ According to Lemma \ref{l5.7}, $\R^*=\oplus_{i\geq 0}\R_i^*$ is generated by $\R_{1}^*.$ By Proposition \ref{p3.5}, $\R^*=\B(\R_{1}^*).$ So we have $P(\R^*)=\R_{1}^*,$ and  $^*(\R^*)=\R$ is generated by $\R_{1}$ according to Lemma \ref{l5.7} again. Hence $\R$ is also a Nichols algebra by Proposition \ref{p3.5}. Thus, $\R=\B(\R_{1})$.

\subsection{Classification result}
 For a root datum $\mathfrak{D}=\mathfrak{D}(\mathcal{D}_{\chi,E},\mathbf{S},\mathbf{X}),$ there is a Nichols algebra $\B(\mathfrak{D})$ in  $_\mathbbm{G}^\mathbbm{G}\mathcal{YD}^{\Phi_{\underline{\mathbf{a}}}},$ where $\underline{\textbf{a}}\in A$ is determined by Equations \eqref{eq4.19}. Denote $\MM (\mathfrak{D})=\B(\mathfrak{D})\#\k \mathbbm{G},$ then we can formulate the main result of the paper.

\begin{theorem}\label{T5.8} Keep the notations as before. We have
\begin{itemize}
\item[(1).] The Majid algebra $\MM(\mathfrak{D})$ is a connected coradical graded pointed Majid algebra of diagonal type over  the group $\mathbbm{G}$. Moreover, $\MM(\mathfrak{D})$ is finite-dimensional if and only if the heights of all restricted Poincare-Birkhoff-Witt generators of $\B(\mathfrak{D})$ are finite.
\item[(2).] Any finite-dimensional connected coradical graded pointed Majid algebra of diagonal type over $\mathbbm{G}$ is isomorphic to a $\MM(\mathfrak{D})$ for some $\mathfrak{D}.$
    \end{itemize}
\end{theorem}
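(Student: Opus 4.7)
The plan is to deduce both parts from the bosonization correspondence of Proposition~\ref{p4.5} together with the structural results established in Sections~4 and~5.

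For part~(1), I would first observe that by the construction in Subsection~2.3, the bosonization $\B(\mathfrak{D})\#\k\G$ is a graded Majid algebra whose degree-zero part is $(\k\G,\Phi_{\underline{\mathbf{a}}})$. Connectedness follows from $\B(\mathfrak{D})_0=\k$, and pointedness together with $G(\MM(\mathfrak{D}))=\G$ follows from the fact that the coradical of the bosonization is $\k\G$. Diagonal type is built into the construction of $V_{\mathfrak{D}}$ via the canonical basis $\{X_i\}$. For the dimension statement, the equivalence $\dim\MM(\mathfrak{D})<\infty\Leftrightarrow\dim\B(\mathfrak{D})<\infty$ is immediate from $\dim\MM(\mathfrak{D})=|\G|\cdot\dim\B(\mathfrak{D})$. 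Via Figure~I the associative twist $\B(\mathfrak{D})'$ is a usual diagonal Nichols algebra in ${}^G_G\mathcal{YD}$ and has the same $\Z^l$-graded dimension as $\B(\mathfrak{D})$ by Proposition~\ref{p4.1}; hence its finite-dimensionality is governed by Heckenberger's PBW theorem, which says $\dim\B(\mathfrak{D})'<\infty$ iff every restricted PBW generator has finite height. Transporting this back through the twist yields the desired criterion.

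For part~(2), let $\MM$ be a finite-dimensional connected coradically graded pointed Majid algebra of diagonal type. From Subsection~5.1 we already have $\MM_0=(\k\G,\Phi_{\underline{\mathbf{a}}})$ with $\G$ abelian and $\Phi_{\underline{\mathbf{a}}}$ an abelian $3$-cocycle, and $\MM=\R\#\k\G$ where $\R$ is the coinvariant Hopf algebra in ${}^\G_\G\mathcal{YD}^{\Phi_{\underline{\mathbf{a}}}}$ by Proposition~\ref{p4.5}. Proposition~\ref{P5.3} then identifies $\R$ with the Nichols algebra $\B(\R_1)$ of diagonal type (since $\R_1$ inherits the diagonal type from $\MM$). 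Applying Theorem~\ref{T4.10}(2) to $\B(\R_1)$ produces a root datum $\mathfrak{D}=\mathfrak{D}(\mathcal{D}_{\chi,E},\mathbf{S},\mathbf{X})$ over $\G$ together with an isomorphism $\B(V_{\mathfrak{D}})\cong\B(\R_1)$ in ${}^\G_\G\mathcal{YD}^{\Phi_{\underline{\mathbf{a}}'}}$ through $\pi$, where $\underline{\mathbf{a}}'$ is determined by Equations~\eqref{eq4.19}. A compatibility check shows $\underline{\mathbf{a}}'=\underline{\mathbf{a}}$: by Corollary~\ref{P4.9} the sequence $\underline{\mathbf{a}}'$ is uniquely determined by $\mathbf{S},\mathbf{X}$, and the hypothesis that $\R\in{}^\G_\G\mathcal{YD}^{\Phi_{\underline{\mathbf{a}}}}$ forces the same formulas to hold for $\underline{\mathbf{a}}$. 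Finally, any isomorphism of Hopf algebras in ${}^\G_\G\mathcal{YD}^{\Phi_{\underline{\mathbf{a}}}}$ induces, via bosonization, an isomorphism of the associated Majid algebras; hence $\MM\cong\B(\mathfrak{D})\#\k\G=\MM(\mathfrak{D})$.

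The main obstacle I expect is the compatibility verification: making sure that the abstract isomorphism provided by Theorem~\ref{T4.10}(2) is realized inside the same twisted Yetter-Drinfeld category ${}^\G_\G\mathcal{YD}^{\Phi_{\underline{\mathbf{a}}}}$ (i.e.\ with the same $3$-cocycle) so that the subsequent bosonization yields $\MM(\mathfrak{D})$ rather than a Majid algebra with a different associator. This amounts to invoking Corollary~\ref{P4.9} together with the uniqueness of the cocycle representative, and then checking that the bosonization construction is functorial with respect to isomorphisms of Hopf algebras in ${}^\G_\G\mathcal{YD}^{\Phi}$; the latter is a routine consequence of the explicit formulas in Subsection~2.3 but needs to be spelled out carefully because the associator, not just the multiplication, must be preserved.
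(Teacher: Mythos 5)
Your proposal is correct and takes essentially the same route as the paper's own (very terse) proof, namely deducing both parts from Proposition~\ref{P5.3} and Theorem~\ref{T4.10} via the bosonization isomorphism of Proposition~\ref{p4.5}; you simply spell out the details, including the cocycle-compatibility check via Corollary~\ref{P4.9} that the authors leave implicit.
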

\begin{proof}
 It follows from  Proposition \ref{P5.3} and Theorem \ref{T4.10}.
\end{proof}

\subsection{A corollary}
In \cite[Conjecture 1.4]{as}, Andruskiewitch-Schneider Conjectured that every finite-dimensional pointed Hopf algebras over $\k$ is generated by groups-like and skew-primitive elements. This is the so called generation problem, which plays an important role on the classification of pointed Hopf algebras. It is true in many cases, see \cite{Ang}. This conjecture was generalized to finite-dimensional pointed Majid algebras or even to pointed finite tensor categories \cite{EGNO}.

\begin{corollary}
Suppose $\MM$ is a finite-dimensional pointed Majid algebras of diagonal type, then $\MM$ is generated by group-like and skew-primitive elements.
\end{corollary}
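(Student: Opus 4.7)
The plan is to reduce the corollary to the coradically graded case covered by Proposition~\ref{P5.3}. First I would form the coradically graded Majid algebra $\gr \MM = \bigoplus_{n\ge 0}\MM_{n}/\MM_{n-1}$ associated with the coradical filtration of $\MM$ (with $\MM_{-1}=0$). As recalled in Section~2.1, $\gr\MM$ is a connected coradically graded pointed Majid algebra whose degree-$0$ part coincides with $(\k\G,\Phi)$. Moreover, $\gr\MM$ is still of diagonal type, since the diagonality hypothesis on $\MM$ refers exactly to the Yetter--Drinfeld module $\R^{\gr}_{1}$ appearing as the infinitesimal part of the coinvariant subalgebra of $\gr\MM$.

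Next I would invoke Proposition~\ref{P5.3} for $\gr\MM$: the coinvariant subalgebra $\R^{\gr}$ satisfies $\R^{\gr}=\B(\R^{\gr}_{1})$, and hence $\gr\MM \cong \B(\R^{\gr}_{1})\,\#\,\k\G$ is generated as a Majid algebra by $\G$ together with $\R^{\gr}_{1}$. Since $\R^{\gr}_{1}\subseteq (\gr\MM)_{1}=\MM_{1}/\MM_{0}$ and $\MM_{0}=\k\G$ splits off as a subcoalgebra of $\MM_{1}$, every class in $\R^{\gr}_{1}$ can be lifted to a genuinely skew-primitive element of $\MM$.

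Then I would run the standard lifting argument. Let $\MM'$ be the Majid subalgebra of $\MM$ generated by the group-like and skew-primitive elements, so that $\MM_{1}\subseteq \MM'$. An induction on $n$ gives $\MM_{n}\subseteq \MM'$: for $x\in \MM_{n}$, the class $\bar x\in (\gr\MM)_{n}$ can, by the generation statement for $\gr\MM$, be expressed as a noncommutative polynomial $P$ in degree-$0$ and degree-$1$ generators of $\gr\MM$; replacing each generator by a chosen lift in $\MM'$ produces an element $P'\in \MM'$ with $x-P'\in\MM_{n-1}\subseteq \MM'$ by the inductive hypothesis, so $x\in\MM'$. Finite-dimensionality then forces $\MM=\MM'$, as desired.

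The step I expect to require the most care is the preliminary one: making precise the sense in which ``diagonal type'' is preserved under $\MM\mapsto \gr\MM$, and tracking the normalizations so that a chosen lift of an element of $\R^{\gr}_{1}$ is truly skew-primitive (rather than merely lying in $\MM_{1}$). Everything else is a direct adaptation, to the quasi-Hopf setting, of the usual filtered-to-graded argument; the nontrivial ingredient is Proposition~\ref{P5.3}, which in turn ultimately rests on Angiono's theorem via the reduction in Figure~I.
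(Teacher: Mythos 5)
Your overall strategy — reduce to the coradically graded case, identify the coinvariant part as a Nichols algebra via Proposition~\ref{P5.3}, then lift generators — is the right idea, and your closing filtered-to-graded induction is a correct (if slightly more laborious than necessary) rendering of the standard fact, which the paper simply cites, that $\MM$ is generated by group-likes and skew-primitives if and only if $\gr\MM$ is. However, there is a genuine gap in the middle step.

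You assert without justification that $\gr\MM$ is a \emph{connected} coradically graded pointed Majid algebra and then invoke Proposition~\ref{P5.3} for it directly. That proposition lives under the standing hypothesis, fixed in the general setup of Section 5, that the Majid algebra is connected — and, as the sentence ``Using the same arguments given in Proposition~\ref{P4.7} and our assumption that $\MM$ being connected, we know that $\G$ is abelian and $\Phi$ is an abelian cocycle'' makes explicit, ``connected'' is exactly what feeds Lemma~\ref{P4.7} to guarantee that the grouplike group is abelian and the associator $\Phi$ is an abelian $3$-cocycle. What Lemma~\ref{P4.7} actually controls is the \emph{support} group $G_{\R}$: only on $G_{\R}$ is one entitled to conclude that the group is abelian and the restricted cocycle is abelian. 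If $\G$ properly contains $G_{\R}$, then $\gr\MM$ is not connected in this sense, $\Phi$ on all of $\G$ need not be abelian, and the whole Section 4 reduction (change of base group, Proposition~\ref{p3.10}, Proposition~\ref{P4.5}) underlying Proposition~\ref{P5.3} is not directly applicable. The paper avoids this by not applying the proposition to $\gr\MM$ itself: it passes to $\R\#\k H$ with $H=G_{\R}$, which \emph{is} connected by construction, applies Theorem~\ref{T5.8} (equivalently Proposition~\ref{P5.3}) to $\R\#\k H$ to conclude $\R=\B(\R_{1})$, and then observes that $\MM=\R\#\k\G$ is therefore generated by $\G$ and $\R_{1}$. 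You should insert this ``restrict to the support group'' step; without it, the invocation of Proposition~\ref{P5.3} is unjustified and can fail when $\G\ne G_{\R}$.
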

\begin{proof}
Since $\MM$ is generated by group-like and skew-primitive elements if and only if its coradically graded version $\gr (\MM)$ is, we can assume that $\MM$ is coradically graded. Let $\R$ be the coinvariant subalgebra of $\MM,$ and assume that its support group is $H.$ Then $\R\# \k H$ is a finite-dimensional connected coradically graded pointed Majid algebra of diagonal type, and thus it is generated by group-like and skew-primitive elements according to Theorem \ref{T5.8}. This implies that $\MM$ is also generated by group-like and skew-primitive elements.
\end{proof}

\section{Examples of genuine pointed Majid algebras}
 In this section, we provide some methods to construct genuine pointed Majid algebras from arithmetic root systems.  For each arithmetic root system $\bigtriangleup_{\chi,E}$ of rank $\theta$ satisfying a mild condition, we show that there always exists a genuine pointed Majid algebra of standard type $\MM\cong \B(V)\#\k \Z_m^\theta$, such that the arithmetic root system of $\B(V)$ is $\bigtriangleup_{\chi,E}$.  For arithmetic root systems of Cartan type, we also provide a unified method to construct genuine finite-dimensional pointed Majid algebras.

\subsection{Pointed Majid algebras of standard type}
Suppose that $\MM$ is a pointed Majid algebra generated by the abelian group $\mathbbm{G}$ and skew-primitive elements $\{X_1,\cdots,X_n\}$ with $\Delta(X_i)=X_i\otimes 1+\mathbbm{g}_i\otimes X_i, 1\leq i\leq n$. Then as our previous paper \cite{qha6}, we call $\MM$ is of \emph{standard} type if $\mathbbm{G}=\langle \mathbbm{g}_1\rangle \times \cdots\times \langle \mathbbm{g}_n\rangle.$ This definition is transferred naturally to Yetter-Drinfeld modules and thus Nichols algebras.

\begin{definition}
Let $V$ be a Yetter-Drinfeld module in $_\mathbbm{G}^\mathbbm{G}\mathcal{YD}^{\Phi_{\underline{\textbf{a}}}}$ of diagonal type and $V$
is said to be of standard type if there exists a canonical basis $X_1,\cdots,X_n$ with degrees $\mathbbm{g}_1,\cdots, \mathbbm{g}_n$ respectively such that  $\mathbbm{G}=\langle \mathbbm{g}_1\rangle \times \cdots\times \langle \mathbbm{g}_n\rangle.$ The Nichols algebra $\B(V)\in \ _\mathbbm{G}^\mathbbm{G}\mathcal{YD}^{\Phi_{\underline{\textbf{a}}}}$ is standard if $V$ is so.
\end{definition}
The definition is independent of the choice of canonical basis. If $V\in{_\mathbbm{G}^\mathbbm{G}\mathcal{YD}^{\Phi_{\underline{\textbf{a}}}}}$ is a Yetter-Drinfeld module of standard type, then $\B(V)\#\k \mathbbm{G}$ is a pointed Majid algebra of standard type.

\begin{lemma}\label{P6.2} Let $\mathbbm{G}=\langle \mathbbm{g}_1\rangle \times \cdots\times \langle \mathbbm{g}_n\rangle=\Z_{\m_1}\times\cdots\times\Z_{\m_n}$ and $\Phi_{\underline{\textbf{a}}}$ an abelian $3$-cocycle on $\mathbbm{G}$.
 \begin{itemize}
 \item[(1).] Suppose $V$ is a Yetter-Drinfeld module of standard type in $_\mathbbm{G}^\mathbbm{G}\mathcal{YD}^{\Phi_{\underline{\textbf{a}}}}$ and $X_1,\cdots,X_n $ is a canonical basis of $V$. Let $(x_{ij})$ be the numbers satisfying  $\mathbbm{g}_i\triangleright X_j=\zeta_{\mathbbm{m}_i^2}^{x_{ij}}X_j$ for all $1\leq i,j\leq n$. Then we have
\begin{equation}\label{eq6.1}
x_{ij}\equiv 0\;(\emph{mod}\;\mathbbm{m}_i)\;(i>j)\end{equation}
\begin{equation}\label{eq6.3}
 x_{ii}\equiv a_i\;(\emph{mod}\; \mathbbm{m}_i);\;\;\;\;
\mathbbm{m}_jx_{ij}\equiv \mathbbm{m}_ia_{ij}\;(\emph{mod}\; \mathbbm{m}_i\mathbbm{m}_j)\; (i<j).
\end{equation}
\item[(2).] Conversely, let $V=\k\{X_1,\cdots ,X_n\}$ be a $\k\mathbbm{G}$-comodule such that the degree of $X_i$ is $\mathbbm{g}_i$ for $1\leq i\leq n$. If we have numbers $(x_{ij})_{1\leq i,j\leq n}$ satisfying equations \eqref{eq6.1}, then the action $$\mathbbm{g}_i\triangleright X_j:=\zeta_{\mathbbm{m}_i^2}^{x_{ij}}X_j,\;(1\leq i,j\leq n)$$ makes $V$  a standard  Yetter-Drinfeld module in $_\mathbbm{G}^\mathbbm{G}\mathcal{YD}^{\Phi_{\underline{\textbf{a}}}}$, where the sequence $\underline{\textbf{a}}$ are the numbers determined by the Equations \eqref{eq6.3}.
\end{itemize}
\end{lemma}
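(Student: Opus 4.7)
The plan is to deduce both parts by specializing Proposition \ref{P4.8} (equivalently Corollary \ref{P4.9}) to the standard-type setting. In the standard case the degrees $\mathbbm{g}_1,\ldots,\mathbbm{g}_n$ of a canonical basis generate $\mathbbm{G}$ freely in the sense that they coincide with the chosen set of generators of $\mathbbm{G}$. Consequently, when we translate to the cover $G=\Z_{\mathbbm{m}_1^2}\times\cdots\times\Z_{\mathbbm{m}_n^2}$ via the isomorphism $V\cong \widetilde V$ of Lemma \ref{l4.4}, the matrices $\mathbf{S}=(s_{ij})_{n\times n}$ and $\mathbf{T}=(t_{ij})_{n\times n}$ of Subsection 4.3 both become the identity matrix: indeed $h_i=\mathbbm{g}_i$ forces $s_{ik}=\delta_{ik}$, and then \eqref{e4.6} is satisfied by $t_{jl}=\delta_{jl}$.

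With $\mathbf{S}=\mathbf{T}=I$, the congruence system \eqref{e4.7}--\eqref{e4.9} of Proposition \ref{P4.8} collapses precisely to the statement of the lemma. Equation \eqref{e4.7} becomes $x_{il}\equiv 0\pmod{\mathbbm{m}_i}$ for $l<i$, which after relabeling is \eqref{eq6.1}. Equation \eqref{e4.8} becomes $x_{ii}\equiv a_i\pmod{\mathbbm{m}_i}$, and \eqref{e4.9} becomes $\mathbbm{m}_l x_{il}\equiv \mathbbm{m}_i a_{il}\pmod{\mathbbm{m}_i\mathbbm{m}_l}$ for $i<l$, together giving \eqref{eq6.3}. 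For part (1), one therefore only needs to verify that the identification $V\cong\widetilde V$ sends the eigenvalue $\mathbbm{g}_i\triangleright X_j=\zeta_{\mathbbm{m}_i^2}^{x_{ij}}X_j$ to the correct action of $g_i$ on the twisted module $V'=\widetilde V^{J_{\underline{\textbf{a}}}^{-1}}$; this is built into the definitions of $\blacktriangleright$ and $\triangleright'_{J_{\underline{\textbf{a}}}}$ in Lemma \ref{l4.4} and Proposition \ref{P4.8}, so the eigenvalues $q_{ij}=\zeta_{m_i}^{x_{ij}}$ appearing in the hypothesis of Proposition \ref{P4.8} match exactly.

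For part (2), I would go in the reverse direction via Corollary \ref{P4.9}. Given $(x_{ij})$ satisfying \eqref{eq6.1}, the corollary produces a unique sequence $\underline{\textbf{a}}\in A$ (with $a_{rst}=0$, consistent with $\Phi_{\underline{\textbf{a}}}$ being abelian by Proposition \ref{p3.9}), defined precisely by \eqref{eq4.19}; written out with $\mathbf{S}=\mathbf{T}=I$, formulas \eqref{eq4.19} reduce to \eqref{eq6.3}. The ranges $0\le a_i<\mathbbm{m}_i$ and $0\le a_{ij}<(\mathbbm{m}_i,\mathbbm{m}_j)=\mathbbm{m}_i$ (using $\mathbbm{m}_i\mid\mathbbm{m}_j$) are automatic after reducing modulo the appropriate moduli. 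Then the Yetter-Drinfeld module structure on $V=\k\{X_1,\ldots,X_n\}$ transported from the usual module $V'\in {}_G^G\mathcal{YD}$ through the twist $J_{\underline{\textbf{a}}}$ and the lift described in Figure II yields a standard object in $_\mathbbm{G}^\mathbbm{G}\mathcal{YD}^{\Phi_{\underline{\textbf{a}}}}$ whose action is exactly $\mathbbm{g}_i\triangleright X_j=\zeta_{\mathbbm{m}_i^2}^{x_{ij}}X_j$.

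The main obstacle is essentially bookkeeping: one must carefully check that the various scalar corrections coming from the twist $J_{\underline{\textbf{a}}}$ (in formula \eqref{2cochain}) cancel properly when passing between the action $\triangleright$ in $_\mathbbm{G}^\mathbbm{G}\mathcal{YD}^{\Phi_{\underline{\textbf{a}}}}$, the lifted action $\blacktriangleright$ in $_G^G\mathcal{YD}^{\pi^*\Phi_{\underline{\textbf{a}}}}$, and the untwisted action $\triangleright'$ in $_G^G\mathcal{YD}$, so that the eigenvalues really are $\zeta_{\mathbbm{m}_i^2}^{x_{ij}}$ on the nose rather than up to $2$-cocycle factors. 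An alternative, perhaps cleaner, proof of (1) that avoids going through the cover $G$ is a direct computation: iterating the projective representation identity $\mathbbm{g}_i\triangleright(\mathbbm{g}_i^s\triangleright X_j)=\widetilde{\Phi}_{\mathbbm{g}_j}(\mathbbm{g}_i,\mathbbm{g}_i^s)\,\mathbbm{g}_i^{s+1}\triangleright X_j$ up to $s=\mathbbm{m}_i-1$ and using $\mathbbm{g}_i^{\mathbbm{m}_i}=1$ in $\mathbbm{G}$, combined with the explicit 3-cocycle formula \eqref{3cocycle}, yields the stated congruences directly; I would include such a verification as a consistency check even though the Proposition \ref{P4.8} specialization does all the heavy lifting.
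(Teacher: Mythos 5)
Your proposal follows essentially the same route as the paper: both lift $V$ to $\widetilde V\in{}_G^G\mathcal{YD}^{\pi^*\Phi_{\underline{\mathbf{a}}}}$, observe that in the standard case $\mathbf{S}=\mathbf{T}=I$, and then read off \eqref{eq6.1}--\eqref{eq6.3} as the specialization of \eqref{e4.7}--\eqref{e4.9} from Proposition \ref{P4.8} (resp.\ Corollary \ref{P4.9} for part (2)). The one thing you flagged as ``bookkeeping'' but did not actually verify --- that the twist does not perturb the generator eigenvalues --- is a genuine step, and the paper settles it with the observation that $J_{\underline{\mathbf{a}}}(g_i,g_j)=J_{\underline{\mathbf{a}}}(g_j,g_i)=1$ directly from \eqref{2cochain}: when the second argument is a single generator $g_j$, all exponents $y_l$ are $0$ or $1$, hence $y_l<\m_l$, hence $y_l-y_l'=0$, so every factor is $1$; you should include this one-line check rather than assert the eigenvalues ``match exactly'' by definition.
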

\begin{proof}
 (1). Let $G=\langle g_1\rangle \times \cdots\times \langle g_n\rangle$ such that $|g_i|=\mathbbm{m}_i^2$, $1\leq i\leq n$. By Proposition \ref{P4.5}, $\B(V)$ can be viewed as a Yetter-Drinfeld module in $_G^G\mathcal{YD}^{\pi^*({\Phi_{\underline{\textbf{a}}}})}$ through $g_i\triangleright X_j=\zeta_{\mathbbm{m}_i^2}^{x_{ij}}X_j$, $1\leq i,j\leq n$. So $\B(V)^{J^{-1}_{\underline{a}}}\in {_G^G\mathcal{YD}}$. Since $g_i\triangleright_{J^{-1}_{\underline{a}}} X_j=\frac{J(g_i,g_j)}{J(g_j,g_i)}g_i\triangleright X$ and $J(g_i,g_j)=J(g_j,g_i)=1 $, we have $g_i\triangleright_{J^{-1}_{\underline{\textbf{a}}}} X_j=\zeta_{\mathbbm{m}_i^2}^{x_{ij}}X_j$. Identities \eqref{eq6.1}-\eqref{eq6.3} follow from Proposition \ref{P4.8}.

 (2). Follows from Proposition \ref{P4.8}.
\end{proof}

In the following, for a root of unity $q$ we use $|q|$ to denote its order.

\begin{proposition}\label{P6.3}
Let $\bigtriangleup_{\chi,E}$ be a connected arithmetic root system of rank $\theta$ listed in \cite{H3} and let $\{q_{ii},\widetilde{q_{ij}}|1\leq i<j\leq \theta\}$ be its structure constants.
\begin{itemize}
\item[(a).]
If there is a $q_{ii}$ or $\widetilde{q_{ij}}$ such that its order is not of the form $p_1p_2\cdots p_n$, where $p_1\neq p_2\neq \cdots \neq p_n$ are prime numbers, then there is a standard Yetter-Drinfeld module  $V\in{_{\k\Z_m^\theta}^{\k\Z_m^\theta}\mathcal{YD}^{\Phi_{\underline{\textbf{a}}}}}$ for some $m$ and $\Phi_{\underline{\textbf{a}}}$ such that $\B(V)\#\k \Z_m^\theta$ is a genuine pointed Majid algebra of standard type, and the arithmetic root system of $\B(V)$ is equal to $\triangle_{\chi,E}.$
\item[(b).]
 If the order of each $q_{ii}$ and $\widetilde{q_{ij}}$ is of the form $p_1p_2\cdots p_n$ for some distinct prime numbers $p_1,p_1,\cdots, p_n,$ then there is no standard Yetter-Drinfeld modules in $_{\k\Z_m^\theta}^{\k\Z_m^\theta}\mathcal{YD}^{\Phi_{\underline{\textbf{a}}}}$ for all $m\geq 1$ and $\Phi_{\underline{\textbf{a}}}\neq 1$.
 \end{itemize}
\end{proposition}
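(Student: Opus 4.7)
My plan is to exploit the parametrization of standard Yetter-Drinfeld modules from Lemma \ref{P6.2}. For $\mathbbm{G}=\Z_m^\theta$, a standard $V\in {_{\k\Z_m^\theta}^{\k\Z_m^\theta}\mathcal{YD}^{\Phi_{\underline{\textbf{a}}}}}$ realizing prescribed structure constants $(q_{ii},\widetilde{q_{ij}})$ is given by integers $(x_{ij})$ with $q_{ij}=\zeta_{m^2}^{x_{ij}}$, subject to the standard condition $x_{ij}\equiv 0\pmod m$ for $i>j$; the cocycle parameters then satisfy $a_i\equiv x_{ii}\pmod m$ and $a_{ij}\equiv x_{ij}\pmod m$ for $i<j$, with $a_{rst}=0$ by abelianness of $\Phi_{\underline{\textbf{a}}}$. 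The Majid algebra $\B(V)\#\k\Z_m^\theta$ is genuine exactly when $\Phi_{\underline{\textbf{a}}}$ is cohomologically non-trivial, equivalently when some $a_i$ or $a_{ij}$ is non-zero. Realizing the structure constants forces $|q_{ii}|,|\widetilde{q_{ij}}|\mid m^2$. Using the freedom to split $\widetilde{q_{ij}}=q_{ij}q_{ji}$ by taking $q_{ji}=1$ and $q_{ij}=\widetilde{q_{ij}}$ for $i<j$ simultaneously satisfies the standard condition and yields the clean equivalences $a_i=0\iff |q_{ii}|\mid m$ and $a_{ij}=0\iff |\widetilde{q_{ij}}|\mid m$.

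\textbf{Part (b).} Let $L:=\mathrm{lcm}\{|q_{ii}|,|\widetilde{q_{ij}}|\}$; by hypothesis $L$ is squarefree. Any realizing $m$ satisfies $L\mid m^2$; squarefreeness of $L$ forces every prime divisor of $L$ to divide $m$, so $L\mid m$. Hence every $|q_{ii}|$ and $|\widetilde{q_{ij}}|$ divides $m$, making all $a_i$ and $a_{ij}$ vanish; together with $a_{rst}=0$ this forces $\Phi_{\underline{\textbf{a}}}=1$, contradicting $\Phi_{\underline{\textbf{a}}}\neq 1$.

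\textbf{Part (a).} Since some order is non-squarefree, so is $L$; pick a prime $p$ with $b:=v_p(L)\geq 2$ and a structure constant whose order $N^\ast$ achieves $v_p(N^\ast)=b$. Set
\[ m:=\prod_{r\mid L}r^{\lceil v_r(L)/2\rceil}. \]
Then $L\mid m^2$ (ensuring realizability), while $v_p(m)=\lceil b/2\rceil<b=v_p(N^\ast)$ (since $b\geq 2$), so $N^\ast\nmid m$. By the framework this forces a non-zero $a_i$ or $a_{ij}$, so $\Phi_{\underline{\textbf{a}}}$ is non-trivial and the Majid algebra is genuine. To construct $V$ concretely, take $V=\k\{X_1,\ldots,X_\theta\}$ with $X_i$ of $\mathbbm{G}$-degree $\mathbbm{g}_i$, pick $x_{ii}\in\{0,\ldots,m^2-1\}$ with $\zeta_{m^2}^{x_{ii}}=q_{ii}$, and for $i<j$ set $x_{ji}=0$ together with $x_{ij}\in\{0,\ldots,m^2-1\}$ satisfying $\zeta_{m^2}^{x_{ij}}=\widetilde{q_{ij}}$. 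Lemma \ref{P6.2}(2) delivers a standard $V\in{_{\Z_m^\theta}^{\Z_m^\theta}\mathcal{YD}^{\Phi_{\underline{\textbf{a}}}}}$; preservation of $(q_{ii},\widetilde{q_{ij}})$ together with Definition \ref{d4.8} ensures that $\B(V)$ has arithmetic root system $\triangle_{\chi,E}$.

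\textbf{Main obstacle.} The subtle step is the choice of $m$ in (a): one must absorb all structure-constant orders into $m^2$ while leaving at least one order not dividing $m$. The resolution is that a single non-squarefree order transmits non-squarefreeness to $L$ at some prime $p$, and the ``half-exponent'' slack $\lceil v_p(L)/2\rceil<v_p(L)$ allows $m$ to miss a structure constant of maximal $p$-valuation. A secondary subtlety, cleanly resolved by $q_{ji}=1$ for $i<j$, is that the individual braiding entries $q_{ij}$ are not pinned down by the arithmetic root system and must be chosen compatibly with the standard condition.
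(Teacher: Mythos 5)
Your proof is correct and is essentially the paper's own argument in disguise: the paper's $m=\sqrt{\mathbbm{m}}$ with $\mathbbm{m}=\mathrm{lcm}_{i,j}\,\Upsilon(|q_{ii}|),\Upsilon(|\widetilde{q_{ij}}|)$ coincides with your $m=\prod_{r\mid L}r^{\lceil v_r(L)/2\rceil}$, and your conditions ``$N^*$ achieving $v_p(N^*)=v_p(L)$'' match the paper's conditions C.1--C.2 for picking $\zeta$. The only stylistic differences are that you phrase everything through $L=\mathrm{lcm}$ and $p$-adic valuations instead of the auxiliary function $\Upsilon$, and you allow any integers $x_{ij}$ with $\zeta_{m^2}^{x_{ij}}=q_{ij}$ rather than the paper's canonical choice $x_{ii}=m^2/|q_{ii}|$; your observation that $a_i=0\iff|q_{ii}|\mid m$ is independent of that choice (and likewise for $a_{ij}$, using $x_{ji}\equiv 0\pmod m$ from the standardness condition in part (b)) is exactly what makes both arguments go through.
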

\begin{proof}
(a). Firstly, we define a function $\upsilon:\N\to \N$ by
\begin{equation}
\upsilon(k)=\left\{
         \begin{array}{ll}
           k+1, & \hbox{if $k$ is odd;} \\
           k, & \hbox{if $k$ is even.}
           \end{array}
       \right.
\end{equation}
 Let $\Upsilon$ be the map
 \begin{eqnarray}
 \Upsilon:\N&\longrightarrow & \N \\
k=p_1^{N_1}p_2^{N_2}\cdots p_n^{N_n}&\mapsto&\Upsilon(k)=p_1^{\upsilon(N_1)}p_2^{\upsilon(N_2)}\cdots p_n^{\upsilon(N_n)}.
 \end{eqnarray}
Here $p_1,p_2,\cdots p_n$ are mutually distinct prime numbers.
Let $m_i=|q_{ii}|, m_{ij}=|\widetilde{q_{ij}}|$ for $1\leq i,j\leq \theta$. By $\mathbbm{m}$ we denote the minimal positive number such that $\Upsilon(m_i)|\mathbbm{m}$ and $\Upsilon(m_{ij})|\mathbbm{m}$ for all $1\leq i,j\leq \theta$. Then it is obvious that $\sqrt{\mathbbm{m}}$ is a positive integer, since all $\sqrt{\Upsilon(m_i)}, \sqrt{\Upsilon(m_{ij})}, 1\leq i,j\leq \theta$ are integers. Let $m=\sqrt{\mathbbm{m}}$.

 Next we will show that there is a standard Yetter-Drinfeld module $V\in{_{\k\Z_m^\theta}^{\k\Z_m^\theta}\mathcal{YD}^{\Phi_{\underline{\textbf{a}}}}}$ for  some  nontrivial $3$-cocycle $\Phi_{\underline{\textbf{a}}}$ on $\Z_m^\theta$. Suppose $\mathbbm{g}_1,\cdots,\mathbbm{g}_\theta$ are free generators of $\Z_m^\theta$, that is $\Z_m^\theta=\langle \g_1\rangle\times \cdots\times \langle \g_\theta\rangle$, and $V=\k\{X_1,\cdots,X_\theta\}$ a $\Z_m^\theta$-graded vector space. For $1\leq i,j\leq \theta$,  define
 \begin{equation}
x_{ij}=\left\{
         \begin{array}{ll}
         \frac{m^2}{m_i}, & \hbox{if  $i=j$ ;} \\
           0, & \hbox{if $\widetilde{q_{ij}}= 1$, $i\neq j$ ;} \\
           0, & \hbox{if $\widetilde{q_{ij}}\neq 1$ and $i>j$;} \\
           \frac{m^2}{m_{ij}}, & \hbox{if $\widetilde{q_{ij}}\neq 1$ and $i<j$ .}
           \end{array}
       \right.
\end{equation}
According to (2) of Lemma \ref{P6.2}, $\mathbbm{g}_i\triangleright X_j=\zeta_{m^2}^{x_{ij}}X_j$, $1\leq i,j\leq \theta$ makes $V$  a standard Yetter-Drinfeld module in  $_{\k \Z_m^\theta}^{\k \Z_m^\theta}\mathcal{YD}^{\Phi_{\underline{\textbf{a}}}}$, where the sequence $\underline{\textbf{a}}$ is determined by Equations \eqref{eq6.3}.
Since $\zeta_{m^2}^{x_{ii}}=\zeta_{m^2}^{\frac{m^2}{m_i}}=q_{ii}$, $\zeta_{m^2}^{x_{ij}}\zeta_{m^2}^{x_{ji}}=\zeta_{m^2}^{\frac{m^2}{m_{ij}}}=\widetilde{q_{ij}}$ for $1\leq i,j\leq \theta$, we prove that the arithmetic root system of $\B(V)$ is equal to $\triangle_{\chi,E}$.

At last, we will show that $\underline{\textbf{a}}$ is nonzero. From the assumption of the first part of the proposition,  there is an element $\zeta$ in $\{q_{ii},\widetilde{q_{ij}}\}$ satisfying the following conditions:
\begin{itemize}
\item[C.1] $|\zeta|=p_1^{N_1}p_2^{N_2}\cdots p^{N_n}_n$ and there exists some $l$ such that $N_l\geq 2.$ Here $p_1,p_2,\cdots,p_n$ are mutually distinct prime numbers.
\item[C.2] $p_l^{N_l+1}\nmid m_i,\ \ p_l^{N_l+1}\nmid m_{ij}$ if $m_i\neq |\zeta|, m_{ij}\neq |\zeta|$.
\end{itemize}
If $\zeta=q_{ii}$ for some $i$, then by the definition of $m$ and the choice of $\zeta$, we have $m\nmid\frac{m^2}{m_i}$, which implies $a_i\neq 0 \;(\textrm{mod}\; m)$ by Equations \eqref{eq6.3}. Similarly, if $\zeta=\widetilde{q_{ij}}$ for some $i,j$, then one can prove that $a_{ij}\neq 0 \;(\textrm{mod}\; m)$ by \eqref{eq6.3}. We have proved that $\underline{\textbf{a}}$ is nonzero.

(b). Suppose there is a standard Yetter-Drinfeld module $V$ in $_{\k\Z_m^\theta}^{\k\Z_m^\theta}\mathcal{YD}^{\Phi_{\underline{\textbf{a}}}}$ for some $m\geq 1$ and  $\Phi_{\underline{\textbf{a}}}$. Then we will prove that $\Phi_{\underline{\textbf{a}}}=1.$

On the one hand, let $\{X_1,\cdots,X_\theta\}$ be a canonical basis of $V$ and $\{\mathbbm{g}_1,\cdots,\mathbbm{g}_\theta\}$ the corresponding degrees. Since $V$ is standard Yetter-Drinfeld module,  $\Z_m^\theta=\langle \mathbbm{g}_1\rangle\times \cdots \times \langle \mathbbm{g}_\theta\rangle$. Let $(x_{ij})_{1\leq i,j\leq \theta}$ be the numbers defined by $\mathbbm{g}_i\triangleright X_j=\zeta_{m^2}^{x_{ij}}X_j, \; 0\leq x_{ij}<m^2.$ So by  Equations \eqref{eq6.3}, we have \begin{eqnarray}
&& x_{ii}\equiv a_i\ (\textrm{mod} \; m),\ 1\leq i\leq \theta;\label{eq6.8}\\
&&x_{il}\equiv a_{il}\ (\textrm{mod}\; m),\ 1\leq i<l\leq \theta.\label{eq6.9}
\end{eqnarray}

On the other hand, since the order of $\zeta_{m^2}^{x_{ii}}=q_{ii}$ is of the form $p_1\cdots p_n$, where $p_1,\cdots,p_n$ are mutually distinct prime numbers, we have $p_1\cdots p_n|m$, and hence $m|x_{ii}$. This implies that $a_i=0$ by Equations \eqref{eq6.8}. Similarly, from  Equations \eqref{eq6.9} one can show that $a_{ij}=0, \leq i<j\leq \theta.$
\end{proof}

According to this proposition, we can construct a big class of genuine pointed Majid algebras such that the corresponding Nichols algebras have arithmetic root systems.

\begin{example}\label{E6.4}
 Let $\bigtriangleup_{\chi,E}$ be an arithmetic root system of the following type:
\begin{itemize}
\item[1.] Rank $2$ arithmetic root systems of cases 1-5, 7-12, 14 as listed in \cite[Table 1]{H3},\\
\item[2.] Rank $3$ arithmetic root systems of cases 1-8, 10, 18 as listed in \cite[Table 2]{H3}, \\
\item[3.] Rank $4$ arithmetic root systems of cases 1-14, 22 as listed in \cite[Table 3]{H3},\\
\item[4.] Higher rank ($\geq 5$) arithmetic root systems of cases 1-4, 7-10, 14, 19, 22 as listed in \cite[Table 4]{H3},
\end{itemize}
such that the parameter $q$ (if there is a parameter $q$ in the root system) is a root of unity, and the order of $q$ is of the form $p_1^{N_1}p_2^{N_2}\cdots p_n^{N_n}$, where $p_1,p_2,\cdots,p_n$ are mutually distinct prime numbers, $n\geq 3$ and there exists at least one $N_i \ge 2$ for some $1\leq i\leq n$. The we can construct a genuine pointed Majid algebras of standard type $\MM=\B(V)\# \k \Z_m^\theta$ such that the arithmetic root system of $\B(V)$ is $\triangle_{\chi,E},$
where the number $m$ is listed in Table 6.1.

{\setlength{\unitlength}{1mm}
\begin{table}[t]\centering
\caption{The number $m$ associated to each arithmetic root system} \label{tab.1}
\vspace{1mm}
\begin{tabular}{r|l|l}
\hline
  &\text{Arithmetic root systems}& \text{$m$}\\
\hline
\hline
  1.  & Rank $2$: cases 1-4, 10; rank $3$: cases 1-8, 10;  & $m=\Upsilon(|q|)$, if $2\mid|q|$;\\
  &  rank $4$: cases 1-14, rank $\geq 5$: cases 1-4, 7-10, 22 &$m=2\Upsilon(|q|)$, if $2\nmid|q|$\\
\hline
  2. & Rank $2$: cases 5&   $m=\Upsilon(|q|)$, if $3\mid|q|$;  \\
  &   &    $m=3\Upsilon(|q|)$, if $3\nmid|q|$ \\
  \hline
  3. &Rank $2$: cases 7-9& \ \ \ \ \  $m=6$\\
  \hline
  4. &Rank $2$: cases 11& \ \ \ \ \  $m=4$\\
  \hline
  5. &Rank $2$: cases 12& \ \ \ \ \  $m=12$\\
  \hline
  6. &Rank $2$: cases 14& \ \ \ \ \  $m=10$\\
  \hline
  7. &Rank $3$: cases 18& \ \ \ \ \  $m=3$\\
  \hline
  8. &Rank $4$: cases 22, Rank $5$: cases 14,19 & \ \ \ \ \  $m=2$\\
  \hline
 \end{tabular}
\end{table}}
Explicitly, let $\Z_m^\theta=\langle \mathbbm{g}_1\rangle \times \cdots \times \langle \mathbbm{g}_\theta\rangle$ and $V=\k \{X_1,\cdots, X_\theta\}$ be a $\Z_m^\theta$-graded vector space with $\deg X_i=\g_{i}$ for $1\leq i\leq \theta$. Define
   \begin{equation}
\mathbbm{g}_i\triangleright X_j=\left\{
         \begin{array}{ll}
         q_{ii}X_i, & \hbox{if  $i=j$ ;} \\
          \widetilde{q_{ij}}X_j, & \hbox{if $i<j$ ;} \\
          X_j, & \hbox{if $i>j$ .}
           \end{array}
       \right.
\end{equation}
Then $V\in {_{\k\Z_m^\theta}^{\k\Z_m^\theta}\mathcal{YD}^{\Phi_{\underline{\textbf{a}}}}}$, $\underline{\textbf{a}}$ is determined by Equations \eqref{eq6.3}. According to Proposition \ref{P6.3}, $\Phi_{\underline{\textbf{a}}}$ is nontrivial and $\MM=\B(V)\#\k \Z_m^\theta$ is a genuine pointed Majid algebra such that the arithmetic root system of $\B(V)$ is $\bigtriangleup_{\chi,E}$.
\end{example}

\begin{remark}
The preceding construction provides many new examples of finite-dimensional pointed Majid algebras. It is obvious that $\MM=\B(V)\#\k \Z_m^\theta$ is finite-dimensional if and only if $\B(V)$ is finite-dimensional, which is completely determined by its arithmetic root system ($=\bigtriangleup_{\chi,E}$). When $\bigtriangleup_{\chi,E}$ is of rank $2$, or of Cartan type, the the associated pointed Majid algebra $\MM=\B(V)\#\k \Z_m^\theta$ is finite-dimensional. For other cases, it is an open question whether the corresponding Nichols algebras is finite-dimesional or not.
\end{remark}

\subsection{Finite-dimensional pointed Majid algebras of Cartan type} In this subsection, we will give more examples of genuine finite-dimensional pointed Majid algebras.
Let $\bigtriangleup_{\chi,E}$ be an arithmetic root system. If $\bigtriangleup$ is a root system of a complex semisimple Lie algebra, then we call $\bigtriangleup_{\chi,E}$ an arithmetic root system of Cartan type.

\begin{definition}
Let $\MM$ be a finite-dimensional connected graded pointed Majid algebra. By Theorem \ref{T5.8}, there exists a root datum $\mathfrak{D}=\mathfrak{D}(\mathcal{D}_{\chi,E},\mathbf{S},\mathbf{X})$ such that $\MM=\MM(\mathfrak{D})$. If the arithmetic root system of $\mathfrak{D}$ is of Cartan type, then we say that $\MM$ is a pointed Majid algebra of Cartan type.
\end{definition}

According to \cite{H3}, if $\B(V)$ is a usual Nichols algebra of diagonal type with finite root system, and the rank of $\B(V)$ is $\theta,$ then there exist a bicharacter $\chi$ on $\Z^\theta$ and a basis $E$ such that $\bigtriangleup(\B(V))_{\chi,E}$ is an arithmetical root system. In fact, arithmetic root systems include more information than root systems of Nichlos algebras. For instance, all the rank $1$ Nichols algebras have the same root system, i.e.,
$\{\alpha,-\alpha\}$, but there are both finite-dimensional and infinite-dimensional rank $1$ Nichols algebras, hence have different arithmetic root systems.

Suppose $\mathrm{C}=(c_{ij})_{1\leq i,j\leq \theta}$ is a finite Cartan matrix and we say $1\leq i\neq j\leq \theta$ are connected if there exist $k_1,k_2,\cdots,k_n$ such that $c_{i,k_1},c_{k_1k_2},\cdots,c_{k_{n-1}k_n},c_{k_n,j}$ are nonzero numbers. In this subsection, we will prove the following conclusion.

\begin{proposition}
Suppose that $\bigtriangleup$ is a finite root system of Cartan type, then there exist an abelian group $\G$ and a root datum
$\mathfrak{D}=\mathfrak{D}(\mathcal{D}_{\chi,E},\mathbf{S},\mathbf{X})$ over $\G,$ such that $\MM(\mathfrak{D})$ is a finite-dimensional genuine pointed Majid algebra and the root system of $\mathfrak{D}$ is $\bigtriangleup.$
\end{proposition}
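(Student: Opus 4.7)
The plan is to produce the desired $\MM(\mathfrak{D})$ as a standard-type example in the spirit of Example \ref{E6.4}, using Lusztig's small quantum groups to supply a finite-dimensional Nichols algebra of the required Cartan type, and then arranging the quantum parameter so that the associated $3$-cocycle is forced to be nontrivial.

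First I would fix a finite root system $\bigtriangleup$ of Cartan type of rank $\theta$, with Cartan matrix $\mathrm{C}=(c_{ij})$ and symmetrizing integers $(d_1,\ldots,d_\theta)$. Choose a prime $p$ that is odd, coprime to $3$ (in the $G_2$ case), coprime to all $d_i$ and to $\det(\mathrm{C})$, and large enough that $p^2$ exceeds Lusztig's threshold for the given type; such a $p$ exists because only finitely many primes are excluded. Put $q=\zeta_{p^2}$, set $m=p^2$ and $\G=\Z_m^\theta=\langle\g_1\rangle\times\cdots\times\langle\g_\theta\rangle$, and define the bicharacter $\chi$ on $\Z^\theta$ via
\begin{equation*}
q_{ii}:=\chi(e_i,e_i)=q^{d_i},\qquad \widetilde{q_{ij}}:=\chi(e_i,e_j)\chi(e_j,e_i)=q^{d_ic_{ij}}\quad(i\neq j).
\end{equation*}
By Lusztig's theory (equivalently, by Heckenberger's list \cite{H3}), the bicharacter $\chi$ yields a finite-dimensional usual Nichols algebra $\B(V')\in{_G^G\mathcal{YD}}$, with $G=\G$, whose arithmetic root system $\bigtriangleup_{\chi,E}$ has underlying root system equal to the prescribed $\bigtriangleup$.

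Second, following Example \ref{E6.4} verbatim, I would define a standard-type Yetter-Drinfeld module $V=\k\{X_1,\ldots,X_\theta\}\in{_\G^\G\mathcal{YD}^{\Phi_{\underline{\mathbf{a}}}}}$ by placing $X_i$ in degree $\g_i$ and letting
\begin{equation*}
\g_i\triangleright X_j=\begin{cases} q_{ii}X_i, & i=j,\\ \widetilde{q_{ij}}X_j, & i<j,\\ X_j, & i>j.\end{cases}
\end{equation*}
By Lemma \ref{P6.2}(2) this action, together with the comodule structure, equips $V$ with the structure of an object in ${_\G^\G\mathcal{YD}^{\Phi_{\underline{\mathbf{a}}}}}$, where the sequence $\underline{\mathbf{a}}$ is forced by the congruences \eqref{eq6.3}. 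Because each $q_{ii}=q^{d_i}$ has order exactly $p^2$ (as $p\nmid d_i$), the hypothesis of Proposition \ref{P6.3}(a) is met, so $\underline{\mathbf{a}}\neq 0$ and $\Phi_{\underline{\mathbf{a}}}$ is a genuine (nontrivial) $3$-cocycle on $\G$. Consequently the corresponding root datum $\mathfrak{D}=\mathfrak{D}(\mathcal{D}_{\chi,E},\mathbf{S},\mathbf{X})$, with $\mathbf{S}=\mathbf{T}=I_\theta$ and $\mathbf{X}=(x_{ij})$ read off from the action above, is admissible in the sense of Section 4.

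Third, I would identify $\B(\mathfrak{D})$ constructed via Figure II with the Nichols algebra $\B(V)\in{_\G^\G\mathcal{YD}^{\Phi_{\underline{\mathbf{a}}}}}$ just produced: by Proposition \ref{P4.5} and Lemma \ref{L2.12}, $\B(V)$ is $\Z^\theta$-graded linearly isomorphic to the twist $\B(V')^{J_{\underline{\mathbf{a}}}}$, hence finite-dimensional. Therefore $\MM(\mathfrak{D})=\B(V)\#\k\G$ is a finite-dimensional connected coradically graded pointed Majid algebra whose datum has root system $\bigtriangleup$, and it is genuine because $\Phi_{\underline{\mathbf{a}}}$ is a nontrivial (in particular non-coboundary) abelian $3$-cocycle on $\G$, which by standard twist-invariance of the associator class prevents $\MM(\mathfrak{D})$ from being twist equivalent to a Hopf algebra.

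The only non-routine step is the choice of the prime $p$: one must guarantee simultaneously that $q=\zeta_{p^2}$ satisfies Lusztig's finiteness conditions for the given Cartan type, that $|q^{d_i}|=p^2$ (so a squared prime factor is present), and that the Heckenberger structure constants above fit Row 1 of Table \ref{tab.1} in Example \ref{E6.4}. All three conditions exclude only finitely many primes, so a valid $p$ always exists; this is the single place where a small type-by-type check (essentially, avoiding a short list of primes for $A_\theta, B_\theta, C_\theta, D_\theta, E_{6,7,8}, F_4, G_2$) is required.
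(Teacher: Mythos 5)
Your overall strategy — realize the Cartan-type bicharacter by a standard-type Yetter--Drinfeld module over $\Z_m^\theta$ and arrange the parameter $q$ so that Proposition~\ref{P6.3}(a) forces a nontrivial $3$-cocycle — is sound in spirit and runs parallel to the paper's own construction. There is, however, a concrete computational error that voids the genuineness claim as written. You set $q=\zeta_{p^2}$ \emph{and} $\G=\Z_{p^2}^\theta$. But in Lemma~\ref{P6.2} (and in Proposition~\ref{P6.3}(a), where $m=\sqrt{\mathbbm{m}}$), if $\m_i=p^2$ then the action of $\g_i$ is by $\zeta_{\m_i^2}=\zeta_{p^4}$-th roots of unity, so the exponents $x_{ij}$ with $\zeta_{p^4}^{x_{ij}}=q_{ij}\in\mu_{p^2}$ are all multiples of $p^2$. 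Equations~\eqref{eq6.3} then give $a_i\equiv x_{ii}\equiv 0\pmod{p^2}$ and $a_{ij}\equiv 0$, i.e.\ $\Phi_{\underline{\mathbf{a}}}$ is \emph{trivial} and the resulting object is an ordinary Hopf algebra. Proposition~\ref{P6.3}(a) does not rescue this: with $|q_{ii}|=p^2$ it yields $\Upsilon(p^2)=p^2$, hence $\mathbbm{m}=p^2$ and $m=p$, so the group it produces is $\Z_p^\theta$, not $\Z_{p^2}^\theta$. Replacing $m=p^2$ by $m=p$ throughout (so $\G=\Z_p^\theta$, the big group of the resolution is $G=\Z_{p^2}^\theta$, the usual Nichols algebra $\B(V')$ lives in ${^G_G\mathcal{YD}}$ rather than ``$G=\G$'', and $x_{ii}=d_i$, $a_i\equiv d_i\not\equiv 0\pmod p$) repairs the argument.

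For comparison, the paper does not insist on a single prime. It chooses for each connected component $I$ of the Dynkin diagram an arbitrary odd $\m_I>2$ satisfying a divisibility chain and $3\nmid\m_I$ in type $G_2$, sets $\G=\Z_{\m_1}\times\cdots\times\Z_{\m_\theta}$, takes $\mathbf{S}=\mathbf{T}=I_\theta$ and $x_{ii}=d_i$, $x_{ij}=-d_ic_{ij}$ ($i<j$), and reads off $a_i\equiv d_i\ne 0$ directly. Finiteness of the nilpotent indices is then settled by citing \cite[Theorem 5.1]{AS1} rather than by invoking a Lusztig-style threshold or coprimality to $\det(\mathrm{C})$; your extra prime conditions are harmless but unnecessary. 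Allowing distinct $\m_I$ per component gives a larger family of examples, though a single modulus would suffice for the bare existence statement.
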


\begin{proof}
Let $\mathrm{C}=(c_{ij})_{1\leq i,j\leq \theta}$ be the finite Cartan matrix corresponding to $\bigtriangleup,$ $\mathrm{D}=\d(d_1,\cdots,d_\theta)$ the diagonal matrix such that $\mathrm{DC}$ is symmetric. Let $\mathfrak{J}$ be the set of connected components of $\{1,\cdots,\theta\}.$ Fix an order $<$ on $\mathfrak{J}$.
For each $I\in \mathfrak{J}$, define a positive odd integer $\m_I>2$ satisfying
\begin{itemize}
\item[T1.]If $I,I'\in \mathfrak{J}$ and $I<I'$, then $\m_I|\m_{I'}$;
\item[T2.]If $I$ is of type $G_3$, we assume that $3\nmid \m_I$.
\end{itemize}
Let $q_I=\zeta_{\m_I^2}$ be a primitive $\m_I^2$-th  root of unity, and $q_{ii}=q_I^{d_i},i\in I, I\in \mathfrak{J}.$ For all $1\leq i<j\leq \theta,$ define $q_{ij}=q_{ii}^{-d_ic_{ij}}, q_{ji}=1.$ Let $E=\{e_1,\cdots,e_\theta\}$ be the canonical basis of $\Z^\theta$ and $\chi$ a bicharacter on $\Z^\theta$ given by $$\chi(e_i,e_j)=q_{ij},\quad 1\leq i,j\leq \theta.$$
Then it is obvious that $\triangle_{\chi,E}$ is an arithmetic root system. Set $\G=\Z_{\m_1}\times \cdots\times \Z_{\m_\theta}=\langle \g_1\rangle\times\cdots\times \langle \g_\theta\rangle,$ here $\m_i=\m_I,i\in I.$ Next we will show that there exists a root datum $\mathfrak{D}=\mathfrak{D}(\mathcal{D}_{\chi,E},\mathbf{S},\mathbf{X})$ over $\G.$

Let $\mathbf{S}=(s_{ij})_{1\leq i,j\leq \theta}$ be the identity matrix, i.e., $s_{ij}=\delta_{ij},1\leq i,j\leq \theta,$ then the inverse matrix $T=(t_{ij})$ is also identity matrix. For all $1\leq i,j \leq \theta$, define $\mathbf{X}=(x_{ij})_{1\leq i,j\leq \theta}$ through
\begin{equation*}
x_{ij}=\left\{
         \begin{array}{ll}
           d_i, & \hbox{$i=j$;} \\
           -d_ic_{ij}, & \hbox{$i<j$;} \\
           0, & \hbox{$i>j$.}
         \end{array}
       \right.
\end{equation*}
Then we have
\begin{equation*}
\sum_{j=1}^m x_{ij}t_{kj}=x_{ik}\equiv 0 \; (\textrm{mod}\; \mathbbm{m}_i), \; 1\leq k<i\leq n,
\end{equation*}
which implies \eqref{e4.7}.

According to the definition of $x_{ij}$, it is obvious that $\prod_{k=1}^{n}\zeta_{m_k}^{s_{ik}x_{ki}}=\zeta_{m_i}^{x_{ii}}=q_{ii},$ here $m_i=\m_i^2$ for $1\leq i\leq \theta$. When $i<j$, we have  $\prod_{k=1}^{n}\zeta_{m_k}^{s_{ik}x_{kj}+s_{jk}x_{ki}}=q_{ij}^{-x_{ij}}=q_{ij}q_{ji}=\widetilde{q_{ij}}$. This implies $\mathfrak{D}=\mathfrak{D}(\mathcal{D}_{\chi,E},\mathbf{S},\mathbf{X})$ is a root datum over $\G$. Since $a_i\equiv x_{ii}=d_i\;(\textrm{mod} \;\m_i)$,  $a_i\neq 0$ for each $i$. Hence $\underline{\textbf{a}}$ is nontrivial, which implies that $\MM(\mathfrak{D})$ is genuine.

Finally we prove that $\MM(\mathfrak{D})$ is finite-dimensional. We need to show that for any $\alpha\in \bigtriangleup^+$, the nilpotent index $N_\alpha$ is finite. Let $\bigtriangleup_I$ be the root system corresponding to $I\in\mathfrak{J}$, it is obvious that $\bigtriangleup=\bigcup_{I\in\mathfrak{J}}\bigtriangleup_I$. Let $G$ be the bigger group define by $\G$ and $\pi,\iota,$ see the sentences before Lemma \ref{l4.4}. We know that there exists a $2$-cocycle $J$ on $G$, such that $U=\widetilde{V(\mathfrak{D})}^{J^{-1}}\in{ _{\k G}^{\k G}\mathcal{YD}}$ and $\B(U)$ and $\B(\mathfrak{D})$ have the same root system. Because the nilpotent index of a root vector is invariant under twisted deformation by $J$, we only need to prove the nilpotent index of the arithmetic root vector of $\B(U)$ is finite. According to \cite[Theorem 5.1]{AS1} and (T2), for all $\alpha\in \bigtriangleup^+_I,I\in \mathfrak{J}$£¬$N_\alpha=N_I=|q_{ii}|,i\in I,$ hence must be finite.

We complete the proof of the proposition.
\end{proof}

\begin{remark}
From the proof of the proposition, we see that there are many choices of $\G$, hence many pointed Majid algebra associated to each arithmetic root system of Cartan type. This also provides a large class of examples of new genuine finite-dimensional pointed Majid algebras.
\end{remark}

\end{document}